\theoremstyle{plain}
\def\endproof{\hspace*{\fill}\mbox{\ \rule{.1in}{.1in}}\medskip }
\newtheorem{theorem}{Theorem}[section]
\newtheorem{corollary}[theorem]{Corollary}
\newtheorem{lemma}[theorem]{Lemma}
\newtheorem{proposition}[theorem]{Proposition}
\newtheorem{definition}[theorem]{Definition}
\theoremstyle{definition}
\newtheorem{remark}[theorem]{Remark}
\numberwithin{equation}{section}
\numberwithin{figure}{section}
\begin{document}

\title[nonlinear shell theories]
{Shell theories arising as low energy $\mathbf{\Gamma}$-limit \\ of 3d nonlinear elasticity}
\author{Marta Lewicka, Maria Giovanna Mora and Mohammad Reza Pakzad}
\address{Marta Lewicka, University of Minnesota, Department of Mathematics, 
206 Church St. S.E., Minneapolis, MN 55455}
\address{Maria Giovanna Mora, Scuola Internazionale Superiore di Studi Avanzati,
via Beirut 2-4, 34014 Trieste, Italy}
\address{Mohammad Reza Pakzad, University of Pittsburgh, Department of Mathematics, 
139 University Place, Pittsburgh, PA 15260}
\email{lewicka@math.umn.edu, mora@sissa.it, pakzad@pitt.edu}
\subjclass{74K20, 74B20}
\keywords{shell theories, nonlinear elasticity, Gamma convergence, calculus of variations}
 
\begin{abstract} 
We discuss the limiting behavior (using the notion of $\Gamma$-limit) of the
3d nonlinear elasticity for thin shells around an arbitrary smooth 2d 
surface.  In particular, under the assumption that the elastic energy
of deformations scales like $h^4$, $h$ being the thickness of a shell,  
we derive a limiting theory which is a generalization of the von K\'arm\'an 
theory for plates.
\end{abstract}

\maketitle
\tableofcontents

\section{Introduction}

The derivation of lower dimensional models for thin structures (such as membranes, 
shells, or beams) from the three-dimensional theory, has been one of the fundamental 
questions since the beginning of research in elasticity \cite{Love}.
Recently, a novel variational approach through {\em $\Gamma$-convergence}
has lead to the derivation of a hierarchy of limiting theories.
Among other features, it provides a rigorous justification
of convergence of three-dimensional minimizers to minimizers 
of suitable lower dimensional limit energies.

In this paper we discuss shell theories arising as $\Gamma$-limits of {\em higher scalings} of the
{\em nonlinear} elastic energy. Given a $2$-dimensional surface 
$S$, consider a shell $S^h$ of mid-surface $S$ and thickness $h$, 
and associate to its deformation $u$ the scaled per unit thickness
three dimensional nonlinear elastic energy $E^{elastic}(u, S^h)$.
We are interested in the identification of the $\Gamma$-limit $I_\beta$ 
of the energies:
\begin{equation*}
h^{-\beta} E^{elastic}(\cdot, S^h),
\end{equation*}
as $h\to 0$, for a given scaling $\beta\geq 0$. As mentioned above, 
this implies convergence, in a suitable sense, of minimizers $u^h$ of $E^{elastic}$ 
(subject to applied forces) to minimizers of two-dimensional energy $I_\beta$, 
provided $E^{elastic}(u^h, S^h)\leq Ch^\beta.$  

In the case when $S$ is a subset of $\mathbb R^2$ (i.e. a plate), 
such $\Gamma$-convergence was first established by
LeDret and Raoult \cite{LR1} for $\beta = 0$, then
by Friesecke, James and M\"uller \cite{FJMgeo, FJMhier} for all $\beta \geq 2$ (see also
\cite{Pa03} for results for $\beta=2$ under additional
conditions). In the case of $0 < \beta < 5/3$, the convergence was recently 
obtained by Conti and Maggi \cite{CM05}, see also \cite{Co03}. 
The regime $5/3 \le \beta<2$ remains open and is conjectured to be relevant 
for the crumpling of elastic sheets.  Other significant results 
for plates concern derivation of limit theories for incompressible materials 
\cite{CD1, CD2, T}, for heterogeneous materials \cite{Schmidt} and through establishing 
convergence of equilibria, rather than strict minimizers \cite{Mo03, MuPa}. 


Much less is known in the general case when $S$ is an arbitrary surface.
The first result by LeDret and Raoult \cite{LeD-Rao} 
relates to scaling $\beta=0$ and models {\em membrane shells}: the limit $I_0$
depends only on the stretching and shearing produced by the deformation on the mid-surface $S$. 
Another study is due to Friesecke, James, Mora and M\"uller \cite{FJMM_cr}, who analyzed 
the case $\beta=2$.  This scaling corresponds to a {\em flexural shell model}, 
where the only admissible deformations are those preserving the metric on $S$. 
The energy $I_2$ depends then on the change of curvature produced by the deformation. 

All the above mentioned theories (as well as the subsequent results in this paper) 
should be put in contrast with a large body of literature, devoted to derivations 
starting from three-dimensional {\em linear} elasticity (see Ciarlet \cite{ciarbookvol3} 
and references therein).
Indeed, since thin structures may undergo large rotations even under the action 
of very small forces, one cannot assume the small strain condition, 
on which the linear elasticity is based.

\medskip

The objective of this work is to discuss the limit energies for scalings $\beta\geq 4$,
for arbitrary surfaces $S$.  We now give a heuristic overview of our results, 
whose precise formulations will be presented in section 2. 
If $E^{elastic}(u, S^h)\approx Ch^\beta$, for any  $\beta>2$, one expects $u$ to be close 
to a rigid motion $R$. This argument can be made precise by means of the quantitative 
rigidity estimate due to  Friesecke, James and M\"uller \cite{FJMgeo} (see also Lemma \ref{approx}). 
We further demonstrate that the first term in the expansion of $u-R$, in terms of $h$, 
equals an infinitesimal isometry 
$V$. That is, there is no first order change in the Riemannian metric of $S$ under 
the displacement $V$. The corresponding bending energy, given in terms of 
the first order change in the second fundamental form of $S$, is the $\Gamma$-limit $I_\beta$ 
if $\beta >4$ (Theorem \ref{thmaintre}). This limit energy coincides with the so-called 
{\em linearly elastic flexural shell model}, derived in \cite{ciarbookvol3} from the linear 
elasticity theory. 
Our result guarantees therefore that, without any a priori smallness assumption on the strain, 
the use of the linearized flexural shell model is justified whenever the order of magnitude 
of the per unit thickness three-dimensional energy is $h^\beta$ with $\beta>4$.

When $\beta =4$, also the second order in $h$ change in the metric on $S$
(stretching) contributes to the limiting energy. This change is induced by $V$, 
and additionally, by an ``approximate second order displacement'' $w$. 
This last notion involves studying the {\em finite strain space}. For a similar situation where 
this space emerges see the discussion by Sanchez-Palencia \cite{sanchez} and Geymonat 
and Sanchez-Palencia \cite{GSP} under the title of ill-inhibited surfaces, 
in the context of linear elasticity. 
In Theorems \ref{thmainuno} and \ref{thmaindue} we derive the energy functional $I_4$, which  
can be seen as a generalization of the {\em von K\'arm\'an  theory} for plates \cite{karman}, 
justified in terms of $\Gamma$-convergence in \cite{FJMhier}.  
Indeed, if $S$ is a plate, then $V$ and $w$ are, respectively, the out-of-plane and 
the in-plane displacements (modulo a possible in-plane infinitesimal 
rigid motion).

A particular class of surfaces when $I_4$ simplifies to the bending energy is the hereby introduced
class of {\em approximately robust} surfaces. We say that $S$ is (approximately) robust if 
any infinitesimal isometry $V$ can be completed by a second order displacement to an (approximate) 
second order isometry. In other words, $S$ can always further adjust its deformation, 
to compensate for the change of metric produced at second order. 
As a result, the total stretching of second order 
is insignificant and the $\Gamma$-limit consists only  of a bending term (Theorem \ref{thmaintre}). 
We show three general examples of approximately robust surfaces:
convex surfaces, surfaces of revolution, and developable surfaces without flat parts. An example 
of a not approximately robust surface is a plate.
 
We also address the issue of external forces, depending on the reference configuration, namely the 
{\em dead loads} (Theorem \ref{thmaincinque}). Under a vanishing average condition 
and a suitable scaling of the forces $f^h$ applied to $S^h$, Theorem \ref{thmainuno} 
provides an information of the deformation of $S^h$ assumed in response to the load. 
In addition, the appropriate limit force $f$ identifies the set of possible rotations 
the body will undergo. 
This phenomenon is easily observed: if $f^h$ is ``compressive'', then $S^h$ prefers to make 
a large rotation rather than undergoing a compression, and an alignment of the infinitesimal isometry 
$V$ with the force is energetically preferable.

\medskip

The identification of $\Gamma$-limit for any scaling in the range $\beta\in(2,4)$ 
and non-flat $S$ is still open.
In analogy  with the analysis developed in \cite{FJMhier} for plates, 
the construction of a recovery sequence requires finding an exact isometry of $S$, 
coinciding with a given second order isometry. 
Another direction of study concerns shells, whose mid-surface is inhibited (or infinitesimally rigid). 
Examples of such are closed or partially clamped elliptic surfaces. 
In this case the limit functionals that our theory yields are identically equal to zero.  
This suggests looking for higher order terms in the development of the three-dimensional 
energy in the sense of $\Gamma$-convergence.  These are subtle issues and we plan to address them
in a forthcoming paper.







\bigskip

\noindent{\bf Acknowledgments.} 
We thank Stefan M\"uller for helpful discussions.
A large part of this work was carried out while the second author was visiting
the Institute for Mathematics and its Applications in Minneapolis (USA),
whose support is gratefully acknowledged.
M.L. was partially supported by the NSF grant DMS-0707275.
M.G.M. was partially supported by the Italian Ministry of University and Research
through the project ``Variational problems with multiple scales'' 2006
and by GNAMPA through the project ``Problemi di riduzione di dimensione
per strutture elastiche sottili'' 2008.

\section{An overview of  the main results}

Let $S$ be a $2$-dimensional surface embedded in $\mathbb{R}^3$.
We assume that $S$ is compact, connected, oriented, and of class $\mathcal{C}^{1,1}$,
and that its boundary $\partial S$ is the union of finitely many 
(possibly none) Lipschitz continuous curves.
Consider a family $\{S^h\}_{h>0}$ of thin shells of thickness $h$ around $S$:
$$S^h = \{z=x + t\vec n(x); ~ x\in S, ~ -h/2< t < h/2\}.$$
We will use the following notation:
$\vec n(x)$ for the unit normal, $T_x S$ for the tangent space,
and $\Pi(x) = \nabla \vec n(x)$ for the shape operator on $S$, at a given $x\in S$.
The projection onto $S$ along $\vec n$ will be denoted by $\pi$, so that:
$$\pi(z) = x \qquad \forall z=x+t\vec n(x)\in S^h.$$
We will assume that $h<h_0$, with $h_0>0$ sufficiently small
to have $\pi$ well defined, and so that:
$1/2<|\mbox{Id } + t\Pi(x)|<3/2$ for all $z$ as above.

\medskip

For a $W^{1,2}$ deformation of a thin shell $u^h: S^h\longrightarrow \mathbb{R}^3$,
we assume that its elastic energy (scaled per unit thickness) 
is given by the nonlinear functional:
$$E^{elastic}(u^h, S^h) = \frac{1}{h}\int_{S^h} W(\nabla u^h),$$
where the stored-energy density function $W$ is nonnegative and $\mathcal{C}^2$ 
in some open neighborhood $\mathcal{O}$ of $SO(3)$, in the space 
$\mathbb{R}^{3\times 3}$ of $3\times  3$ real matrices.
Moreover, $W$ is assumed to satisfy, for all $F\in \mathbb{R}^{3\times 3}$ 
and some $C>0$:
\begin{equation*}
\begin{split}
&W(RF) = W(F)\qquad \forall R\in SO(3),\\
&W(R) = 0 \qquad \forall R\in SO(3),\\
&W(F)\geq C \mathrm{dist}^2(F, SO(3)).  
\end{split}
\end{equation*}
Here $SO(3)$ denotes the group of proper rotations.
Recall that the tangent space to $SO(3)$ at $\mbox{Id}$ is the space
of skew-symmetric matrices:
$$so(3) = \{F\in\mathbb{R}^{3\times 3}; ~~ F=-F^T\}.$$

\medskip

It is convenient to view $u^h$ through their rescalings 
$y^h\in W^{1,2}(S^{h_0}, \mathbb{R}^3)$:
\begin{equation*}
y^h(x + t\vec n(x)) = u^h\left(x + t{h}/{h_0}\vec n(x)\right) 
\qquad \forall x\in S \quad \forall t\in (-h_0/2, h_0/2).
\end{equation*}
The advantage is that all $y^h$ have the common domain $S^{h_0}$.
We are concerned with the limiting behavior of the energies:
\begin{equation*}
I^h(y^h) = \frac{1}{h} \int_{S^h}W(\nabla u^h),
\end{equation*}
relative to low energy deformations.  That is, we want to discuss the limit,
as $h\to 0$, of the functionals $I^h/e^h$, for a given sequence
of positive numbers $e^h$, which we assume to satisfy:
\begin{equation}\label{ehass}
\lim_{h\to 0} e^h/h^4 = \kappa^2 <\infty.
\end{equation}
With this in mind, define the related
scaled average displacement:
$$(V^h[y^h])(x) = \frac{h}{\sqrt{e^h}} \fint_{-h_0/2}^{h_0/2}
y^h(x+t\vec n) - x~\mbox{d}t.$$
Since we will frequently deal with such vector fields $V\in W^{1,2}(S,\mathbb{R}^3)$ on the surface, 
we introduce the following notation. By
$\mbox{sym } \nabla V(x)$ we mean a bilinear form on $T_x S$ given by:
$(\mbox{sym } \nabla V(x) \tau)\eta = \frac{1}{2} [(\partial_\tau V(x))\eta +
(\partial_\eta V(x))\tau]$, for all $\tau,\eta\in T_x S$. 
Also, given a matrix field $A\in L^2(S, \mathbb{R}^{3\times 3})$, 
by $A_{tan}(x)$ we denote the tangential minor of $A$ at 
$x\in S$, that is $[(A(x)\tau)\eta]_{\tau,\eta\in T_x S}$.

\medskip

Our first main result is the following:

\begin{theorem}\label{thmainuno}
Assume (\ref{ehass}) and
let $u^h\in W^{1,2}(S^h,\mathbb{R}^3)$ be a sequence of deformations such that
the sequence of scaled energies 
$\{\frac{1}{e^h} I^h(y^h)\}$ is bounded. Then there exist rigid motions of 
$\mathbb{R}^3$, given through proper rotations $Q^h\in SO(3)$ and translations 
$c^h\in\mathbb{R}^3$ such that for the normalized deformations:
$$\tilde y^h(x+t\vec n) = (Q^h)^T y^h(x+t\vec n) - c^h$$
the following holds.
\begin{itemize}
\item[(i)] $\tilde y^h$ converge in $W^{1,2}(S^{h_0})$ to $\pi$.
\item[(ii)] $V^h[\tilde y^h]$ converge (up to a subsequence) in $W^{1,2}(S)$
to some vector field $V\in W^{2,2}(S,\mathbb{R}^3)$ with skew-symmetric gradient, 
that is:
\begin{equation}\label{Vass1}
\partial_\tau V(x) = A(x)\tau \qquad \forall \tau\in T_x S, \,\, {\rm{a.e.}} \,\, x\in S
\end{equation}
for some matrix field $A\in W^{1,2}(S,\mathbb{R}^{3\times 3})$ such that:
\begin{equation}\label{Vass2}
A(x) \in so(3) \qquad \forall x\in S.
\end{equation}
\item[(iii)] $\frac{1}{h} \mbox{sym } \nabla V^h[\tilde y^h]$ converge
(up to a subsequence) weakly in $L^{2}(S)$ to some symmetric matrix field $B_{tan}$ 
on $S$. 
\item[(iv)] There holds:
$$\liminf_{h\to 0} \frac{1}{e^h} I^h(y^h) \geq I(V,B_{tan}),$$
where:
\begin{equation}\label{vonKarman}
I(V,B_{tan})=
\frac{1}{2} 
\int_S \mathcal{Q}_2\left(x,B_{tan} - \frac{\kappa}{2} (A^2)_{tan}\right)
+ \frac{1}{24} \int_S \mathcal{Q}_2\left(x,(\nabla(A\vec n) - A\Pi)_{tan}\right).
\end{equation}
\end{itemize}
\end{theorem}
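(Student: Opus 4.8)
The plan is to establish the compactness statements (i)–(iii) via the quantitative rigidity estimate of Friesecke--James--Müller, and then prove the lower bound (iv) by a careful Taylor expansion of $W$ around an approximating rotation field, combined with a Korn-type argument that controls the skew-symmetric and symmetric parts of the relevant scaled strains separately.

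\medskip

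\emph{Step 1: Rigidity and the rotation field $R^h$.} I would first apply the geometric rigidity estimate (Lemma \ref{approx}) on a covering of $S^{h_0}$ by sets of the form $\pi^{-1}(U_\rho)$, where $\{U_\rho\}$ are coordinate patches of diameter $\sim h$ on $S$, rescaled back to $S^h$. This produces a piecewise-constant, then mollified, matrix field $R^h \in W^{1,2}(S, SO(3))$ such that $\int_{S^h}\mathrm{dist}^2(\nabla u^h, R^h\circ\pi) \le C e^h$, together with the gradient estimate $\int_S |\nabla R^h|^2 \le C e^h/h^2$ and the $L^2$ closeness $\int_S |R^h - Q^h|^2 \le C e^h/h^2$ for a single rotation $Q^h$ obtained by averaging. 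Since $e^h/h^2 \to 0$ by \eqref{ehass}, after replacing $y^h$ by $\tilde y^h = (Q^h)^T y^h - c^h$ we get $R^h\to \mathrm{Id}$ in $L^2$, bounded in $W^{1,2}$, hence (after a subsequence) $R^h \to \mathrm{Id}$ in $W^{1,2}$ as well. Rescaling back to the fixed domain $S^{h_0}$ and using Poincaré on each fiber gives (i), $\tilde y^h \to \pi$ in $W^{1,2}(S^{h_0})$.

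\medskip

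\emph{Step 2: The limiting displacement $V$ and the constraint \eqref{Vass1}--\eqref{Vass2}.} The normalized quantity $A^h := \frac{h}{\sqrt{e^h}}(R^h - \mathrm{Id})$ is bounded in $W^{1,2}(S)$, so $A^h \rightharpoonup A$ weakly in $W^{1,2}$ and strongly in $L^2$; writing $R^h R^{h,T} = \mathrm{Id}$ and expanding shows $\mathrm{sym}\, A^h \to 0$ and the limit $A$ is skew-symmetric pointwise, i.e.\ \eqref{Vass2}. For $V$: the scaled average displacement $V^h[\tilde y^h]$ satisfies $\partial_\tau V^h \approx A^h \tau$ up to an error controlled by $\frac{h}{\sqrt{e^h}}\mathrm{dist}(\nabla u^h, R^h)$, which is $L^2$-bounded since $\frac{h}{\sqrt{e^h}}\cdot\sqrt{e^h} = h \to 0$ in the relevant norm after integrating over fibers — so $\{V^h\}$ is bounded in $W^{1,2}(S)$ and, by the constraint $\mathrm{sym}\,\nabla V^h \to 0$ read through a Korn inequality on $S$, in fact bounded in $W^{2,2}$; passing to the limit yields $V \in W^{2,2}$ with $\nabla V = A$ on tangent vectors, proving (ii). Statement (iii) is then just the definition of $B_{tan}$ as a weak $L^2$ limit of the bounded sequence $\frac{1}{h}\mathrm{sym}\,\nabla V^h$ (boundedness again from the energy bound).

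\medskip

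\emph{Step 3: The lower bound \eqref{vonKarman}.} This is the main work. I would change variables to the fixed domain, write $\nabla u^h = R^h(\mathrm{Id} + \sqrt{e^h}\,G^h + \ldots)$ and identify the limiting strain $G$ by splitting into the contribution of the in-plane part (converging to $B_{tan} - \frac{\kappa}{2}(A^2)_{tan}$ — the second-order metric change induced by $V$ via $A^2$, together with the stretching $B_{tan}$) and the part linear in the transverse variable $t$, whose tangential minor converges to $(\nabla(A\vec n) - A\Pi)_{tan}$, the first-order change of the second fundamental form. Using $W(R^h F) = W(F)$, a second-order Taylor expansion $W(\mathrm{Id}+\sqrt{e^h}G) = \frac{e^h}{2}\mathcal{Q}_3(G) + o(e^h|G|^2)$, weak $L^2$ lower semicontinuity of the convex form $G \mapsto \int \mathcal{Q}_3(G)$, and finally minimizing $\mathcal{Q}_3$ over the unconstrained transverse components to pass from $\mathcal{Q}_3$ to the reduced quadratic form $\mathcal{Q}_2$ on symmetric $2\times 2$ tensors, I recover the two integrals in \eqref{vonKarman}, with the $\frac{1}{24}$ coming from $\int_{-1/2}^{1/2} t^2\,dt$. \textbf{The hard part} is controlling the error terms uniformly: the rigidity estimate only gives $L^2$ control of $\mathrm{dist}(\nabla u^h, R^h)$, so the set where $\nabla u^h$ leaves the neighborhood $\mathcal{O}$ of $SO(3)$ must be handled by truncation (a ``good set'' argument, replacing $G^h$ by $G^h \chi_{\{|\sqrt{e^h}G^h| \le \delta\}}$ and showing the bad set has vanishing measure-weighted energy), and the nonlinear geometry of $S$ — the factors $(\mathrm{Id}+t\Pi)^{-1}$ relating $\nabla u^h$ on $S^h$ to derivatives in the $(x,t)$ chart — must be expanded consistently to the right order in $h$. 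Keeping track of which geometric terms survive at order $\sqrt{e^h} \sim h^2$ versus which are absorbed into errors is where all the delicacy lies.
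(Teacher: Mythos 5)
Your proposal follows essentially the same route as the paper: geometric rigidity on $h$-sized patches producing $R^h$ and $Q^h$ (Lemma \ref{appr}), the normalized field $A^h=\frac{h}{\sqrt{e^h}}((Q^h)^TR^h-\mathrm{Id})$ whose symmetric part scales like $\frac{1}{2}A^2$ (Lemma \ref{lem3.2}), the strain $G^h=\frac{1}{\sqrt{e^h}}((R^h)^T\nabla_h y^h-\mathrm{Id})$ split into a $t$-independent part identified with $B_{tan}-\frac{\kappa}{2}(A^2)_{tan}$ and a $t$-linear part with slope $(\nabla(A\vec n)-A\Pi)_{tan}$, followed by truncation, Taylor expansion of $W$, weak lower semicontinuity of $\mathcal{Q}_3$, and relaxation to $\mathcal{Q}_2$ (Lemmas \ref{lem3.6}, \ref{liminf}, \ref{lem3.9}).

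One step as written would not go through: in Step 2 you claim that $\mathrm{sym}\,\nabla V^h\to 0$ ``read through a Korn inequality'' gives a uniform $W^{2,2}$ bound on $V^h$. Korn controls the full first gradient by the symmetric gradient plus lower-order terms; it says nothing about second derivatives, and indeed no uniform $W^{2,2}$ bound on $V^h$ is available or needed. The correct route (and the paper's) is to pass to the limit first: $\nabla V^h\to A_{tan}$ in $L^2$ with $A\in W^{1,2}(S,so(3))$, so the limit $V$ satisfies $\nabla V=A_{tan}$ and therefore has two derivatives in $L^2$, i.e.\ $V\in W^{2,2}$. Relatedly, the $L^2$-boundedness of $\frac{1}{h}\mathrm{sym}\,\nabla V^h$ in (iii) is not an immediate consequence of the energy bound alone; it comes from decomposing $\nabla V^h$ into the fiber average of $G^h$ (weakly compact in $L^2$) plus $\frac{1}{\sqrt{e^h}}((Q^h)^TR^h-\mathrm{Id})_{tan}$, whose symmetric part converges precisely because $\sqrt{e^h}/h^2\to\kappa<\infty$ — this is where hypothesis \eqref{ehass} enters and should be made explicit. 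Finally, the identification of the $t$-linear part of $G$ is the most delicate point of the argument and requires a difference-quotient construction in the normal variable (the fields $f^{s,h}$ of Lemma \ref{lem3.6}); your proposal asserts the conclusion but does not indicate the mechanism.
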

The following quadratic, nondegenerate forms are of relevance here:
\begin{equation}\label{Qmatrices}
\mathcal{Q}_3(F) = D^2 W(\mbox{Id})(F,F), \qquad
\mathcal{Q}_2(x, F_{tan}) = \min\{\mathcal{Q}_3(\tilde F); ~~ (\tilde F - F)_{tan} = 0\}.
\end{equation}
The form $\mathcal{Q}_3$ is defined for  $F\in\mathbb{R}^{3\times 3}$, while $\mathcal{Q}_2(x,\cdot)$,  
for a given $x\in S$ is defined on tangential minors $F_{tan}$ of such matrices.
Both forms $\mathcal{Q}_3$ and all $\mathcal{Q}_2(x,\cdot)$ are positive definite and depend only on the 
symmetric parts of their arguments (see \cite{FJMgeo}).

Theorem \ref{thmainuno} will be proved in sections \ref{sec_rigidity} 
and \ref{sec_proofthuno}. One of the crucial ingredients is a result on approximating
large deformations \cite{FJMgeo}. For completeness, we sketch its proof, in the setting
of shells, in Appendix A.  We also note that because of the non trivial geometry of the shell, 
the limiting energy $I$, in general, exhibits a dependence on the point $x\in S$, 
although the three-dimensional configuration is homogeneous.

\medskip

Our second main result concerns the possibility of recovering the functional 
$I (V, B_{tan})$
in (\ref{vonKarman}) (or its components), as the limit of scaled energies
$\frac{1}{e^h} I^h(y^h)$, for some sequence of deformations.
For this, define the finite strain displacement space:
$$\mathcal{B} = \Big\{\lim_{h\to 0}\mathrm{sym }~\nabla w^h; ~~ w^h\in W^{1,2}(S,\mathbb{R}^3)\Big\},$$
where limits are taken in $L^2(S)$ (clearly, both the weak and the strong convergences yield the same
$\mathcal{B}$). We then have:
\begin{theorem}\label{thmaindue}
Assume (\ref{ehass}). 
For every $V\in W^{2,2}(S,\mathbb{R}^3)$ satisfying (\ref{Vass1}) and (\ref{Vass2}),
and every $B_{tan}\in\mathcal{B}$, there exists a sequence of deformations
$u^h\in W^{1,2}(S^{h},\mathbb{R}^3)$ such that:
\begin{itemize}
\item[(i)] $y^h$ converge in $W^{1,2}(S^{h_0})$ to $\pi$.
\item[(ii)] $V^h[y^h]$ converge in $W^{1,2}(S)$ to $V$.
\item[(iii)] $\frac{1}{h}\mathrm{sym }~\nabla V^h[y^h]$ converge in $L^{2}(S)$ to $B_{tan}$.
\item[(iv)] Recalling the definition (\ref{vonKarman}) one has:
$$\lim_{h\to 0} \frac{1}{e^h} I^h(y^h) = I(V, B_{tan}).$$
\end{itemize}
\end{theorem}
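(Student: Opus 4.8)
The plan is to construct an explicit recovery sequence, adapting to a curved mid-surface the von K\'arm\'an construction of Friesecke, James and M\"uller \cite{FJMhier}.

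\smallskip

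\noindent\emph{Step 1: reduction to smooth data.} First I would note that the right-hand side of (\ref{vonKarman}) is continuous under strong $W^{2,2}(S,\mathbb{R}^3)$ convergence of $V$ — observe that $A$, and hence $A^2$, $\nabla(A\vec n)$ and $A\Pi$, depends continuously on $V$ in the relevant norms — and under $L^2(S)$ convergence of $B_{tan}$; moreover the property of admitting a sequence $u^h$ as in (i)--(iv) is stable under such convergence, by a diagonal argument. Since the fields $\mathrm{sym}\,\nabla w$ with $w\in\mathcal{C}^\infty(S,\mathbb{R}^3)$ are dense in $\mathcal{B}$ by its very definition, and since smooth infinitesimal isometries are dense, in the $W^{2,2}(S,\mathbb{R}^3)$ norm, among the fields satisfying (\ref{Vass1})--(\ref{Vass2}) (a density statement that uses the $\mathcal{C}^{1,1}$ regularity of $S$), it then suffices to construct $u^h$ when $V$, $A=\nabla V$ and $w$ are all smooth.

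\smallskip

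\noindent\emph{Step 2: the ansatz.} Writing $\varepsilon^h:=\sqrt{e^h}$, so that $\varepsilon^h/h\to0$ and $\varepsilon^h/h^2\to\kappa$ by (\ref{ehass}), I would set, for $x\in S$ and $-h/2<s<h/2$,
\[
u^h(x+s\vec n(x))=\Bigl(x+\frac{\varepsilon^h}{h}V(x)+\varepsilon^h w(x)\Bigr)+s\Bigl(\vec n(x)+\frac{\varepsilon^h}{h}A(x)\vec n(x)\Bigr)+\varepsilon^h\Bigl(s\,\gamma(x)+\frac{s^2}{2}\,\delta(x)\Bigr)+o(\varepsilon^h),
\]
the error being understood in $W^{1,\infty}(S^h)$. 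The first parenthesis is a deformed mid-surface $\phi^h$; the second maps the fibre $s\mapsto x+s\vec n(x)$ onto the segment through $\phi^h(x)$ along the infinitesimally rotated normal $\vec n+\tfrac{\varepsilon^h}{h}A\vec n$ (which is unit, and normal to $\phi^h$ to leading order, since $A\vec n\perp\vec n$ for $A\in so(3)$); the warping term $\varepsilon^h(s\gamma+\tfrac{s^2}{2}\delta)$ is of size $O(\varepsilon^h h)$ in $u^h$ but of size $O(\varepsilon^h)$ in $\partial_s u^h$. Using that the minimizer in (\ref{Qmatrices}) depends linearly — hence, for smooth data, smoothly — on its tangential argument, I would choose $\gamma,\delta\in\mathcal{C}^\infty(S,\mathbb{R}^3)$ so that the transversal components $E^h_{i3}$ ($i=1,2,3$) of $E^h:=\tfrac12\bigl((\nabla u^h)^T\nabla u^h-\mathrm{Id}\bigr)$ attain, to leading order in $\varepsilon^h$, those of the matrix realizing $\mathcal{Q}_2$. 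With $V$, $w$ and the frame of $S$ smooth, $u^h$ is a Lipschitz deformation of $S^h$.

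\smallskip

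\noindent\emph{Step 3: strain expansion and passage to the limit.} Expanding $E^h$ in the frame adapted to $S$, using $\partial_\tau V=A\tau$, $A+A^T=0$ and $\Pi=\Pi^T$, I would obtain, in $L^\infty(S^h)$,
\[
\frac{1}{\varepsilon^h}\,E^h_{tan}=\Bigl(B_{tan}-\frac{\varepsilon^h}{2h^2}(A^2)_{tan}\Bigr)+\frac{s}{h}\,\mathrm{sym}\bigl(\nabla(A\vec n)-A\Pi\bigr)_{tan}+o(1),
\]
the first term coming from the second-order metric change of $\tfrac{\varepsilon^h}{h}V$ together with $\mathrm{sym}\,\nabla w=B_{tan}$, and the $s$-linear term from the first-order change of the second fundamental form, with the warping contributing only $o(\varepsilon^h)$ to $E^h_{tan}$. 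Thus $E^h/\varepsilon^h$ is bounded in $L^\infty$ and converges a.e.\ to the symmetric field $G$ with $G_{tan}=\bigl(B_{tan}-\tfrac{\kappa}{2}(A^2)_{tan}\bigr)+\tfrac{s}{h}\,\mathrm{sym}(\nabla(A\vec n)-A\Pi)_{tan}$ and with the remaining components attaining $\mathcal{Q}_2$, i.e.\ $\mathcal{Q}_3(G)=\mathcal{Q}_2(x,G_{tan})$. By frame indifference together with $W\in\mathcal{C}^2$, $W(\mathrm{Id})=0$, $DW(\mathrm{Id})=0$, one gets $\tfrac{1}{e^h}W(\nabla u^h)\to\tfrac12\mathcal{Q}_3(G)=\tfrac12\mathcal{Q}_2(x,G_{tan})$ a.e.; the growth bound $W(F)\le C\,\mathrm{dist}^2(F,SO(3))$ on $\mathcal{O}$ and the $L^\infty$ bound on $E^h/\varepsilon^h$ then let me integrate this convergence. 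Writing $\tfrac{1}{e^h}I^h(y^h)=\tfrac{1}{e^h h}\int_{S^h}W(\nabla u^h)$, changing variables $z=x+s\vec n$ (Jacobian $\to1$) and integrating in $s$ over $(-h/2,h/2)$ — which annihilates the cross term and produces the weights $1$ and $\int_{-1/2}^{1/2}\sigma^2\,d\sigma=\tfrac1{12}$ — I would recover exactly $I(V,B_{tan})$ of (\ref{vonKarman}). Finally, (i)--(iii) are read off the ansatz: $y^h(x+t\vec n)=u^h(x+t\tfrac{h}{h_0}\vec n)\to x=\pi(x+t\vec n)$ in $W^{1,2}(S^{h_0})$ as $\varepsilon^h/h\to0$; $V^h[y^h]=\tfrac{h}{\varepsilon^h}\fint(y^h-x)=V+hw+o(1)\to V$ in $W^{1,2}(S)$; and $\tfrac1h\,\mathrm{sym}\,\nabla V^h[y^h]=\mathrm{sym}\,\nabla V+\mathrm{sym}\,\nabla w+o(1)=B_{tan}+o(1)$ in $L^2(S)$ by (\ref{Vass1}).

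\smallskip

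\noindent The hard part will be Steps~2--3: unlike the flat case, the shape operator $\Pi$ and the non-constant frame couple the three scales $\varepsilon^h/h$, $\varepsilon^h$ and $(\varepsilon^h/h)^2$, and the warping $\gamma,\delta$ must be engineered so as to drive the transversal strain to its $\mathcal{Q}_2$-optimal value \emph{without} disturbing the membrane term measured by (iii); the careful verification that every discarded contribution is $o(e^h)$ in energy — and $o(1)$ in the relevant norm for (i)--(iii) — is where most of the work lies. A further, non-routine ingredient is the density invoked in Step~1, which is precisely where the regularity hypotheses on $S$ and $\partial S$ are used.
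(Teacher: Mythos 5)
Your Steps 2--3 are essentially the paper's construction: the ansatz (\ref{rec_seq}) is exactly your deformed mid-surface plus rotated normal plus $s$-linear and $s$-quadratic warping terms $d^{0,h},d^{1,h}$ chosen to realize the minimizer $c(x,\cdot)$ in (\ref{marta}), and the strain expansion, the $\fint s^2\,\mathrm{d}s=\tfrac1{12}$ weight, and the limits (i)--(iii) all come out as you describe. The problem is Step 1. You reduce to smooth data by asserting that smooth infinitesimal isometries are dense, in $W^{2,2}(S,\mathbb{R}^3)$, among the $W^{2,2}$ fields satisfying (\ref{Vass1})--(\ref{Vass2}). That is a genuine gap: the constraint $\mathrm{sym}\,\nabla V=0$ is a linear PDE system on $S$ whose $W^{2,2}$ solution space can be large and irregular, and density of smooth solutions in it is \emph{not} known for a general compact $\mathcal{C}^{1,1}$ surface with Lipschitz boundary --- it is a hard problem in its own right (established only later, and only for special classes such as elliptic surfaces of higher regularity). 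Nothing in the hypotheses of the theorem supports it, and the whole point of the regime $\beta\ge4$ in this paper is that one can avoid any such density or ``matching'' statement.

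The paper's substitute is a Lusin-type truncation: by \cite{Liu50} one picks $v^h\in W^{2,\infty}(S,\mathbb{R}^3)$ with $\|v^h-V\|_{W^{2,2}}\to0$, $\frac{\sqrt{e^h}}{h}\|v^h\|_{W^{2,\infty}}\le\epsilon_0$ and $\frac{h^2}{e^h}\mu\{v^h\neq V\}\to0$ (see (\ref{vhapprox})). These $v^h$ are \emph{not} infinitesimal isometries, so the ansatz produces a spurious membrane strain $\frac{\sqrt{e^h}}{h}\,\mathrm{sym}\,\nabla v^h$ (and an analogous spurious shear), and the key technical step --- entirely absent from your proposal --- is the proof of (\ref{bad_term}): since $\mathrm{sym}\,\nabla v^h$ vanishes on $\{v^h=V\}$ and has Lipschitz constant $O(h/\sqrt{e^h})$, one gets $\|\mathrm{sym}\,\nabla v^h\|_{L^\infty}\to0$ and then $\frac1h\|\mathrm{sym}\,\nabla v^h\|_{L^2}\to0$ using the measure bound. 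A final diagonal argument in $\epsilon_0$ removes the residual error $\eta$. If you want to keep your Step 1, you must either prove the density claim (which you cannot in this generality) or replace it by this truncation device; as written, the argument does not close. A second, minor omission: the approximating fields $w^h$ for $B_{tan}$ must be chosen with $\sqrt h\,\|w^h\|_{W^{2,\infty}(S)}\to0$ (condition (\ref{norm})), which requires reparametrizing the sequence rather than just invoking density of smooth fields.
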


The form of the limiting energy functional $I$ simplifies, when the space
$\mathcal{B}$ is large enough to choose $B_{tan}$ so that the first term in
(\ref{vonKarman}) vanishes.  That is, we call $S$ ``approximately robust''
if for every $V\in W^{2,2}(S,\mathbb{R}^3)$ satisfying (\ref{Vass1})
(\ref{Vass2}), one has $(A^2)_{tan}\in\mathcal{B}$.  Then we have:

\begin{theorem}\label{thmaintre}
Assume (\ref{ehass}). Let $\kappa = 0$ or let $S$ be approximately robust. 
Then for every 
$V\in W^{2,2}(S,\mathbb{R}^3)$ satisfying (\ref{Vass1}) and (\ref{Vass2}),
there exists a sequence of deformations
$u^h\in W^{1,2}(S^{h},\mathbb{R}^3)$ such that (i) and (ii) of Theorem \ref{thmaindue} hold.
Moreover:
\begin{equation*}
\lim_{h\to 0} \frac{1}{e^h} I^h(y^h) = \tilde I(V),
\end{equation*}
where
\begin{equation}\label{beta>4} 
\tilde I(V) = \frac{1}{24} 
\int_S \mathcal{Q}_2\Big(x,\big(\nabla(A\vec n) - A\Pi\big)_{tan}\Big).
\end{equation}
\end{theorem}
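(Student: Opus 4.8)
\textbf{Proof proposal for Theorem \ref{thmaintre}.}

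The plan is to deduce this statement directly from Theorem \ref{thmaindue} by a judicious choice of $B_{tan}$. First I would dispose of the case $\kappa = 0$: then the first integrand in (\ref{vonKarman}) reduces to $\mathcal{Q}_2(x, B_{tan})$, and since $\mathcal{Q}_2(x,\cdot)$ is positive definite on symmetric tangential minors, the choice $B_{tan} = 0 \in \mathcal{B}$ kills the first term and leaves precisely $\tilde I(V)$ in (\ref{beta>4}). Invoking Theorem \ref{thmaindue} with this $B_{tan}$ produces a sequence $u^h$ satisfying (i)--(iv) of that theorem, and in particular (i) and (ii) here, together with $\lim_{h\to 0} \frac{1}{e^h} I^h(y^h) = I(V,0) = \tilde I(V)$, as desired.

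In the case $\kappa \neq 0$ with $S$ approximately robust, I would use the hypothesis that $(A^2)_{tan} \in \mathcal{B}$ for every $V$ satisfying (\ref{Vass1})--(\ref{Vass2}), where $A$ is the associated skew-symmetric field. Since $\mathcal{B}$ is a linear subspace of the symmetric $L^2$ tangential minors (it is evidently closed under scalar multiplication and sums of the approximating fields $w^h$), the element $B_{tan} := \frac{\kappa}{2}(A^2)_{tan}$ also belongs to $\mathcal{B}$; here I should note that $(A^2)_{tan}$ is indeed symmetric, since $(A^2)^T = (A^T)^2 = (-A)^2 = A^2$, so this $B_{tan}$ is an admissible argument for $I(V,\cdot)$. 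With this choice the first integrand $\mathcal{Q}_2\big(x, B_{tan} - \frac{\kappa}{2}(A^2)_{tan}\big) = \mathcal{Q}_2(x,0) = 0$ vanishes pointwise, so $I(V, B_{tan})$ collapses to the second term, which is exactly $\tilde I(V)$. Applying Theorem \ref{thmaindue} to this pair $(V, B_{tan})$ yields a sequence $u^h$ for which (i) and (ii) hold and $\lim_{h\to 0}\frac{1}{e^h} I^h(y^h) = I(V,B_{tan}) = \tilde I(V)$.

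The argument is essentially a corollary, so there is no serious obstacle; the only points demanding care are bookkeeping ones. One must check that $\mathcal{B}$ is genuinely a linear space (so that rescaling $(A^2)_{tan}$ by $\kappa/2$ stays inside it) and that $(A^2)_{tan}$ is symmetric so that it is a legitimate input to $\mathcal{Q}_2(x,\cdot)$ and to the definition of approximate robustness; both are immediate from the definitions and from $A(x) \in so(3)$. I would also remark that, strictly speaking, Theorem \ref{thmaindue} as stated already gives the strong convergences in (iii) there, which are stronger than what Theorem \ref{thmaintre} asserts, so no refinement of the construction is needed --- one simply drops the conclusions about $\frac{1}{h}\mathrm{sym}\,\nabla V^h[y^h]$. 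Thus the proof amounts to selecting $B_{tan}$ to annihilate the stretching term and quoting Theorem \ref{thmaindue} verbatim.
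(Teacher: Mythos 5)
Your proposal is correct and is essentially the paper's argument: Theorem \ref{thmaintre} is obtained by feeding Theorem \ref{thmaindue} the choice $B_{tan}=0$ when $\kappa=0$, and $B_{tan}=\frac{\kappa}{2}(A^2)_{tan}\in\mathcal{B}$ (using linearity of $\mathcal{B}$ and approximate robustness) otherwise, so that the stretching term vanishes. The only cosmetic difference is that for $\kappa=0$ the paper writes out the simplified recovery sequence (\ref{rec_seq}) with $w^h=0$ explicitly rather than quoting Theorem \ref{thmaindue} verbatim, which changes nothing in substance.
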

Theorems \ref{thmaindue} and \ref{thmaintre} will be proved in section 
\ref{sec_recovery}.
In section \ref{sec_robust} we discuss the space $\mathcal{B}$ and 
approximately robust surfaces.  In particular, we shall see 
that convex surfaces, surfaces of revolution, 
and non-flat developable surfaces are approximately robust.

\medskip

Theorems \ref{thmainuno} and \ref{thmaindue} (or \ref{thmaintre}) can be summarized
(although they provide more information than the below statement), using the 
language of $\Gamma$-convergence. For completeness, the following result will be 
presented in Appendix B.

\begin{corollary}\label{thmainquattro} 
Assume (\ref{ehass}).
\begin{itemize}
\item[(i)] Define a sequence of functionals:
\begin{equation*} 
\begin{split}
&\mathcal{F}^h: W^{1,2}(S^{h_0},\mathbb{R}^3)
\times W^{1,2}(S,\mathbb{R}^3)\times L^2(S, \mathbb{R}^{2\times 2})
\longrightarrow \overline{\mathbb{R}}\\
&\mathcal{F}^h(y^h, V^h, B^h_{tan}) = \left\{\begin{array}{ll}
\displaystyle{\frac{1}{e^h}I^h(y^h)} & \mbox{ if }~ V^h = V^h[y^h] \mbox{ and } 
B^h_{tan} = \frac{1}{h}\mbox{sym }\nabla V^h,\\
+\infty & \mbox{ otherwise.}
\end{array}\right.
\end{split}
\end{equation*}
Then $\mathcal{F}^h$ $\Gamma$-converge, as $h\to 0$, to the following functional:
\begin{equation*} 
\mathcal{F}(y, V, B_{tan}) = \left\{\begin{array}{ll}
I(V, B_{tan}) & \mbox{ if }~ y=\pi, ~ V\in W^{2,2} \mbox{ satisfies (\ref{Vass1}),
(\ref{Vass2})}, \\
& \mbox{ and } B_{tan}\in\mathcal{B},\\
+\infty & \mbox{ otherwise.}
\end{array}\right.
\end{equation*}
\item[(ii)] Assume that $\kappa=0$ or let $S$ be approximately robust.  
Define the functionals:
\begin{equation*} 
\begin{split}
&\tilde{\mathcal{F}}^h: W^{1,2}(S^{h_0},\mathbb{R}^3)
\times W^{1,2}(S,\mathbb{R}^3)\longrightarrow \overline{\mathbb{R}}\\
&\tilde{\mathcal{F}}^h(y^h, V^h) = \left\{\begin{array}{ll}
\displaystyle{\frac{1}{e^h}I^h(y^h)} & \mbox{ if }~ V^h = V^h[y^h] \\
+\infty & \mbox{ otherwise.}
\end{array}\right.
\end{split}
\end{equation*}
Then $\tilde{\mathcal{F}}^h$ $\Gamma$-converge, as $h\to 0$, to the functional:
\begin{equation*} 
\tilde{\mathcal{F}}(y, V) = \left\{\begin{array}{ll}
\displaystyle{\tilde I(V)}
& \mbox{ if }~ y=\pi \mbox{ and } V\in W^{2,2} \mbox{ satisfies (\ref{Vass1}),
(\ref{Vass2})},\\
+\infty & \mbox{ otherwise.}
\end{array}\right.
\end{equation*}
\end{itemize}
All statements above remain valid if the product spaces (the domains of functionals
$\mathcal{F}^h$, $\tilde{\mathcal{F}}^h$) are equipped with the weak (instead of strong)
topology.
\end{corollary}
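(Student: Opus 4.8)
The plan is to treat Corollary~\ref{thmainquattro} as the translation of Theorems~\ref{thmainuno}, \ref{thmaindue} and~\ref{thmaintre} into the language of $\Gamma$-convergence, so that the work reduces to verifying the two standard ingredients --- the $\liminf$ inequality and the existence of a recovery sequence --- for the product topology on $W^{1,2}(S^{h_0},\mathbb{R}^3)\times W^{1,2}(S,\mathbb{R}^3)\times L^2(S,\mathbb{R}^{2\times 2})$ (and its evident analogue in part~(ii)), followed by the remark that the argument survives when the strong topology is replaced by the weak one. I understand $\Gamma$-convergence sequentially, which is harmless here since every estimate used takes place on bounded sets.

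I would dispose of the recovery sequence first, since it is essentially immediate. Given $(y,V,B_{tan})$: if $\mathcal{F}(y,V,B_{tan})=+\infty$ the constant sequence works; otherwise $y=\pi$, $V\in W^{2,2}$ satisfies~(\ref{Vass1})--(\ref{Vass2}) and $B_{tan}\in\mathcal{B}$, and Theorem~\ref{thmaindue} furnishes deformations $u^h$ with $y^h\to\pi$ in $W^{1,2}(S^{h_0})$, $V^h[y^h]\to V$ in $W^{1,2}(S)$, $\frac1h\,\mathrm{sym}\,\nabla V^h[y^h]\to B_{tan}$ in $L^2(S)$, and $\frac1{e^h}I^h(y^h)\to I(V,B_{tan})$. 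Putting $V^h:=V^h[y^h]$ and $B^h_{tan}:=\frac1h\,\mathrm{sym}\,\nabla V^h$ gives a sequence converging strongly to $(\pi,V,B_{tan})$ along which $\mathcal{F}^h(y^h,V^h,B^h_{tan})=\frac1{e^h}I^h(y^h)\to I(V,B_{tan})=\mathcal{F}(y,V,B_{tan})$ --- in fact with equality, not merely a $\limsup$ bound. Part~(ii) is argued identically, with Theorem~\ref{thmaintre} replacing Theorem~\ref{thmaindue}; it is precisely here that the hypothesis $\kappa=0$ or the approximate robustness of $S$ gets used.

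For the $\liminf$ inequality for $\mathcal{F}^h$, I take $(y^h,V^h,B^h_{tan})\to(y,V,B_{tan})$ with $\liminf\frac1{e^h}I^h(y^h)<\infty$; along a subsequence realizing the $\liminf$ and making $\mathcal{F}^h$ finite one has $V^h=V^h[y^h]$, $B^h_{tan}=\frac1h\,\mathrm{sym}\,\nabla V^h[y^h]$ and $\{\frac1{e^h}I^h(y^h)\}$ bounded, so Theorem~\ref{thmainuno} applies and produces rotations $Q^h$ and translations $c^h$ with $\tilde y^h\to\pi$, $V^h[\tilde y^h]\to\hat V\in W^{2,2}$ obeying~(\ref{Vass1})--(\ref{Vass2}), $\frac1h\,\mathrm{sym}\,\nabla V^h[\tilde y^h]\rightharpoonup\hat B_{tan}$, and $\liminf\frac1{e^h}I^h(y^h)\geq I(\hat V,\hat B_{tan})$. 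I then reconcile this with the un-normalized data: $Q^h,c^h$ are bounded, so $y$ is a rigid motion applied to $\pi$, and since $V^h[y^h]$ is bounded while $\sqrt{e^h}/h\to0$ by~(\ref{ehass}), that rigid motion must be trivial, giving $y=\pi$; moreover $\hat V$ differs from $V:=\lim V^h[y^h]$ only by an infinitesimal rigid motion of $\mathbb{R}^3$, so $V$ too satisfies~(\ref{Vass1})--(\ref{Vass2}), $B_{tan}=\lim\mathrm{sym}\,\nabla\big(\frac1hV^h[y^h]\big)\in\mathcal{B}$ by the definition of $\mathcal{B}$, and a short computation gives $I(\hat V,\hat B_{tan})=I(V,B_{tan})$ --- the invariance of $I$ under this discrepancy being exactly what the correction term $\frac\kappa2(A^2)_{tan}$ in~(\ref{vonKarman}) encodes. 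Hence $\liminf\frac1{e^h}I^h(y^h)\geq I(V,B_{tan})=\mathcal{F}(y,V,B_{tan})$, while the cases with $\mathcal{F}=+\infty$ are automatic. The $\liminf$ inequality for $\tilde{\mathcal{F}}^h$ follows from the same chain together with the nonnegativity of $\mathcal{Q}_2$, hence needs no hypothesis on $S$. Finally, the recovery sequences built above converge strongly and therefore weakly, so their $\limsup$ bounds carry over verbatim, and the $\liminf$ inequality rested on an energy bound alone --- which still triggers Theorem~\ref{thmainuno}, the strong limits it delivers being forced to coincide with the prescribed weak limits by uniqueness --- so the whole statement persists for the weak product topology.

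The only step that is not pure bookkeeping, and hence the main obstacle, is the reconciliation in the previous paragraph: one must check both that the assumed convergence of $V^h[y^h]$ together with $h/\sqrt{e^h}\to\infty$ pins the limiting deformation to $\pi$ itself rather than to an arbitrary rigid motion of it, and that the residual discrepancy --- an infinitesimal rigid motion of $\mathbb{R}^3$ together with its compatible second-order strain --- leaves the limit energy $I$ unchanged, which is the structural reason for the presence of the correction term $\frac\kappa2(A^2)_{tan}$ in~(\ref{vonKarman}).
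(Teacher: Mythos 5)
Your proposal is correct, and its skeleton (the $\liminf$ inequality from Theorem \ref{thmainuno}, the recovery sequence from Theorems \ref{thmaindue} and \ref{thmaintre}, and the observation that only the $\limsup$ direction uses $\kappa=0$ or approximate robustness) coincides with the paper's. The genuine difference is in the one nontrivial step: reconciling the normalized limits $(\hat V,\hat B_{tan})$ produced by Theorem \ref{thmainuno} with the prescribed limits $(V,B_{tan})$ of the given sequence. The paper notes that $\nabla V^h[\tilde y^h]-(Q^h)^T\nabla V^h[y^h]=\frac{h}{\sqrt{e^h}}\big((Q^h)^T-\mathrm{Id}\big)_{tan}$, whence $|Q^h-\mathrm{Id}|\leq C\sqrt{e^h}/h\to 0$; this decay is fast enough that Lemma \ref{appr}, and hence all assertions of Theorem \ref{thmainuno}, remain valid with $Q^h$ replaced by $\mathrm{Id}$. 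After that renormalization the two sets of quantities differ only by the constants $c^h$, which are shown to vanish (giving $y=\pi$), and one gets $\nabla\hat V=\nabla V$ and $\hat B_{tan}=B_{tan}$ on the nose, so no invariance property of $I$ is ever invoked. You instead keep the rotations and absorb the discrepancy into an infinitesimal rigid motion $Fx+d$ with $F=\lim\frac{h}{\sqrt{e^h}}\big((Q^h)^T-\mathrm{Id}\big)\in so(3)$, claiming $I(\hat V,\hat B_{tan})=I(V,B_{tan})$. That ``short computation'' does close: one finds $\hat A=A+F$ and $\hat B_{tan}=B_{tan}+\kappa\,\mathrm{sym}(FA)_{tan}+\frac{\kappa}{2}(F^2)_{tan}$, so that $\hat B_{tan}-\frac{\kappa}{2}(\hat A^2)_{tan}=B_{tan}-\frac{\kappa}{2}(A^2)_{tan}$ and $\nabla(\hat A\vec n)-\hat A\Pi=\nabla(A\vec n)-A\Pi$ since $F$ is constant and $\nabla\vec n=\Pi$. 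Your route therefore costs this extra algebra (plus a justification that the limit $F$ exists, which follows from the same boundedness observations), but it makes explicit the invariance structure of $I$ that the correction term $\frac{\kappa}{2}(A^2)_{tan}$ encodes; the paper's renormalization trick is shorter and sidesteps the computation entirely. The recovery step and the passage to the weak topology are handled exactly as in the paper.
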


We further consider a sequence of forces $f^h\in L^2(S^h,\mathbb{R}^3)$, 
acting on thin shells $S^h$. For simplicity, we assume that:
$$ f^h(x+t\vec n(x)) = h\sqrt{e^h}\det\left(\mbox{Id} + t\Pi(x)\right)^{-1} f(x),$$
where $f\in L^2(S,\mathbf{R}^3)$ is normalized so that:
\begin{equation}\label{fhass}
\int_S f = 0. 
\end{equation}
Define $m$ to be the maximized action of force $f$ on $S$ over all rotations
of $S$, and let $\mathcal{M}$ be the corresponding set of maximizers:
\begin{equation}\label{setM}
\mathcal{M} = \left\{\bar{Q}\in SO(3); ~~ \int_S f(x)\cdot \bar{Q}x~\mbox{d}x = m = 
\max_{Q\in SO(3)} \int_S f\cdot Qx \right\}.
\end{equation}
The total energy functional on $S^h$ is given through:
$$J^h(y^h) = I^h(y^h) + m^h - \frac{1}{h}\int_{S^h} f^h u^h,$$
where $m^h = h\sqrt{e^h} m$.
\begin{theorem}\label{thmaincinque}
Assume (\ref{ehass}) and (\ref{fhass}). Then:
\begin{itemize}
\item[(i)] For every small $h>0$ one has:
$$0\geq \inf\left\{\frac{1}{e^h} J^h(y^h); ~~ u^h\in W^{1,2}(S^h, \mathbb{R}^3)
\right\}\geq -C.$$
\item[(ii)]  If $u^h\in W^{1,2}(S, \mathbb{R}^3)$ is a minimizing sequence
of $\frac{1}{e^h} J^h$, that is:
\begin{equation}\label{min_seq}
\lim_{h\to 0} \left(\frac{1}{e^h}J^h(y^h) - \inf\frac{1}{e^h}J^h \right)=0,
\end{equation}
then there exists $Q^h\in SO(3)$ and $c^h\in\mathbb{R}^3$
such that for the normalized deformations $\tilde y^h = (Q^h)^Ty^h - c^h$
the convergences of Theorem \ref{thmainuno} (i) (ii) and (iii) hold.
The convergence of (a subsequence of) 
$\frac{1}{h}\mbox{sym}\nabla V^h[\tilde y^h]$ to $B_{tan}$ in (iii)
is strong in $L^2(S)$.

Moreover, the set of accumulation points of $\{Q^h\}$ is contained within $\mathcal{M}$.
Any limit $(V, B_{tan}, \bar Q)$ minimizes the functional: 
$$J(V, B_{tan},\bar Q) = I(V, B_{tan}) - \int_S f\cdot \bar{Q} V,$$ 
over all $V\in W^{2,2}(S,\mathbb{R}^3)$ satisfying (\ref{Vass1}) (\ref{Vass2}),
all $B_{tan}\in\mathcal{B}$ and  $\bar Q\in \mathcal{M}$.
\item[(iii)] If $\kappa=0$ in (\ref{ehass}), or if $S$ is approximately robust, 
then for any minimizing sequence as in (\ref{min_seq}), we obtain 
convergences of $\tilde y^h$, $V^h[\tilde y^h]$ and $Q^h$
as described in (ii) above, and the limit $(V,\bar{Q})$ 
minimizes the functional:
$$\tilde J(V,\bar Q) = \tilde I(V) - \int_S f\cdot \bar Q V$$ 
over all $V\in W^{2,2}(S,\mathbb{R}^3)$ satisfying (\ref{Vass1}) (\ref{Vass2})
and all $\bar Q\in \mathcal{M}$.
\end{itemize}
\end{theorem}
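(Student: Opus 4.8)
The plan is to run the usual ``convergence of almost minimizers'' argument: Theorem~\ref{thmainuno} supplies compactness and the $\liminf$ inequality, Theorem~\ref{thmaindue} (resp.\ \ref{thmaintre}) supplies the recovery sequences, and the new work consists in handling the dead-load term and the rotation $\bar Q$. For part (i), the estimate $\inf\frac1{e^h}J^h\le 0$ comes from testing with the rigid motion $u^h(z)=\bar Q z$ for a fixed $\bar Q\in\mathcal M$: then $I^h(y^h)=0$, the normal average of $u^h$ is $x\mapsto\bar Q x$, and the prescribed form of $f^h$ together with $\int_{-h/2}^{h/2}t\,\mathrm dt=0$ and $\bar Q\in\mathcal M$ give $\frac1h\int_{S^h}f^hu^h = h\sqrt{e^h}\int_S f\cdot\bar Q x = m^h$, so $J^h(y^h)=0$. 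For the lower bound I would first record, for an arbitrary deformation, the identity
\[
\frac1{e^h}J^h(y^h) \;=\; \frac1{e^h}I^h(y^h) \;+\; \frac{h}{\sqrt{e^h}}\Big(m-\int_S f\cdot\bar u^h\Big),\qquad
\bar u^h(x):=\fint_{-h/2}^{h/2}u^h(x+t\vec n)\,\mathrm dt ,
\]
again by the form of $f^h$. Since $\int_S f=0$, one may subtract an arbitrary rigid motion, writing $\int_S f\cdot\bar u^h=\int_S f\cdot Q^h x+\int_S f\cdot(\bar u^h-Q^h\mathrm{id}-c^h)$ and estimating the first term by $m$; choosing $Q^h,c^h$ by the rigidity estimate of Lemma~\ref{approx} and averaging over the normal variable then gives $\|\bar u^h-Q^h\mathrm{id}-c^h\|_{L^2(S)}\le \tfrac{C}{h}\sqrt{I^h(y^h)}$, the factor $h^{-1}$ being the usual loss of the single-rotation estimate on a thin shell (together with $\mathrm{dist}^2(\nabla u^h,SO(3))\le \tfrac1C W(\nabla u^h)$). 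Young's inequality yields $\frac1{e^h}J^h\ge \tfrac12\frac1{e^h}I^h-C$, whence (i); the same inequality shows that along any minimizing sequence $\frac1{e^h}I^h(y^h)$ stays bounded, so Theorem~\ref{thmainuno} is applicable.

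Next, part (ii). Apply Theorem~\ref{thmainuno} to a minimizing sequence to obtain $Q^h$, $c^h$ and $\tilde y^h=(Q^h)^Ty^h-c^h$ with, along a subsequence, $V^h[\tilde y^h]\to V$ in $W^{1,2}(S)$, $\frac1h\mathrm{sym}\,\nabla V^h[\tilde y^h]\rightharpoonup B_{tan}$ in $L^2(S)$ and $Q^h\to\bar Q$. Substituting into the identity above the relation $\bar u^h(x)=Q^h\big(x+c^h+\tfrac{\sqrt{e^h}}{h}V^h[\tilde y^h](x)\big)$ coming from the definition of $V^h[\tilde y^h]$, and using $\int_S f=0$ once more,
\[
\frac1{e^h}J^h(y^h) \;=\; \frac1{e^h}I^h(y^h) \;+\; \frac{h}{\sqrt{e^h}}\Big(m-\int_S f\cdot Q^h x\Big) \;-\; \int_S f\cdot Q^h\,V^h[\tilde y^h] .
\]
The middle term is nonnegative; since the left-hand side is bounded (minimizing sequence, and $\inf\le0$) while $h/\sqrt{e^h}\to\infty$ by (\ref{ehass}), necessarily $m-\int_S f\cdot Q^hx\to0$, so every accumulation point of $\{Q^h\}$ lies in $\mathcal M$ — in particular $\bar Q\in\mathcal M$. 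Taking the $\liminf$ and invoking Theorem~\ref{thmainuno}(iv) for the first term, nonnegativity of the second, and $Q^hV^h[\tilde y^h]\to\bar QV$ in $L^2(S)$ for the third, gives $\liminf_{h\to0}\frac1{e^h}J^h(y^h)\ge I(V,B_{tan})-\int_S f\cdot\bar QV = J(V,B_{tan},\bar Q)$.

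For the matching upper bound, fix any admissible triple $(V',B'_{tan},\bar Q')$ with $\bar Q'\in\mathcal M$, let $u'^h$ be the recovery sequence of Theorem~\ref{thmaindue} for $(V',B'_{tan})$, and test $J^h$ with $z\mapsto\bar Q'u'^h(z)$. Frame indifference of $W$ leaves $I^h$ unchanged, and because $\int_S f\cdot\bar Q' x=m$ the term $m^h$ cancels exactly, so that $\frac1{e^h}J^h(\bar Q'y'^h)=\frac1{e^h}I^h(y'^h)-\int_S f\cdot\bar Q'V^h[y'^h]\to J(V',B'_{tan},\bar Q')$. Together with the previous paragraph this gives $\lim_{h\to0}\inf\frac1{e^h}J^h=\min J$, shows that every subsequential limit $(V,B_{tan},\bar Q)$ of a normalized minimizing sequence minimizes $J$, and that $\frac1{e^h}J^h(y^h)\to J(V,B_{tan},\bar Q)$; the resulting matching of energies, combined with the strict convexity of $\mathcal Q_2$ and the form of the lower-bound argument in Section~\ref{sec_proofthuno}, upgrades $\frac1h\mathrm{sym}\,\nabla V^h[\tilde y^h]\to B_{tan}$ to strong $L^2(S)$ convergence. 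Part (iii) follows the same scheme with $\tilde I,\tilde J$ and Theorem~\ref{thmaintre} in place of $I,J$ and Theorem~\ref{thmaindue}; note that the $\liminf$ inequality $\liminf\frac1{e^h}I^h\ge I(V,B_{tan})\ge\tilde I(V)$ uses only $\mathcal Q_2\ge0$, so approximate robustness (or $\kappa=0$) is needed solely to realize $\tilde I(V')$ by a recovery sequence.

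The step I expect to be the main obstacle is the lower bound of part (i): since the prefactor $h/\sqrt{e^h}$ of the load term diverges, the work done by the force has to be absorbed into the elastic energy, and this is possible only because rigidity controls $m-\int_S f\cdot\bar u^h$ \emph{quadratically} in $\sqrt{I^h}$ and with exactly the right power of $h$ — the $h^{-1}$ loss of the single-rotation estimate on the thin shell being precisely balanced by $1/\sqrt{e^h}=O(h^{-2})$ against $h/\sqrt{e^h}=O(h^{-1})$, so the argument sits on the borderline. The very same divergence forces $\int_S f\cdot Q^h x\to m$, i.e.\ the shell rotates so as to asymptotically maximize the work of the dead load; this explains why the limit minimization runs over $\bar Q\in\mathcal M$ only, and also why $J$ (resp.\ $\tilde J$) is bounded below along the ``trivial'' directions $V=Ax+b$, since criticality of $\bar Q\in\mathcal M$ yields $\int_S f\cdot\bar Q A x=0$ for every $A\in so(3)$.
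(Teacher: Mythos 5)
Your argument is correct and follows essentially the same route as the paper: test with rigid motions $z\mapsto\bar Q z$, $\bar Q\in\mathcal M$, for the upper bound; absorb the load term into the elastic energy via the rigidity estimate and Young's inequality for the lower bound; use the divergence of $h/\sqrt{e^h}$ to force the accumulation points of $Q^h$ into $\mathcal M$; and combine the liminf inequality of Theorem~\ref{thmainuno} with the recovery sequences of Theorems~\ref{thmaindue} and \ref{thmaintre} for minimality and the strong convergence of $\frac1h\mathrm{sym}\,\nabla V^h[\tilde y^h]$. The one point to be careful about is that in part (i) the infimum runs over arbitrary $u^h$, so the smallness hypothesis on $h^{-3}E(u^h,S^h)$ in Lemma~\ref{approx} need not hold; one must instead work with the intermediate (not $SO(3)$-valued) field $\tilde R^h$ from its proof and take $Q^h$ as the rotation nearest to $\fint_S\tilde R^h$, exactly as the paper remarks.
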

In section \ref{sec_Gamma}, we prove Theorem \ref{thmaincinque} and explain 
the significance of the set $\mathcal{M}$ in the setting of dead loads.

The lower bound on the functionals $J$ and $\tilde J$, 
as well as attainment of their infima, can be proved independently,
under conditions (\ref{fhass}) and:
\begin{equation}\label{linear}
\int_S  f(x)\cdot \bar{Q}Fx ~\mbox{d}x = 0 
\quad \forall \bar{Q}\in\mathcal{M}\quad \forall F\in so(3). 
\end{equation}
Here $\mathcal{M}$ is any closed, nonempty subset of $SO(3)$.  When $\mathcal{M}$
has the form (\ref{setM}), then (\ref{linear}) follows from (\ref{setM})
and can be seen as its linearization.
This analysis will be carried out in Appendix C.

\section{Convergence of low energy deformations}\label{sec_rigidity}

In this section we derive some bounds on families of vector mappings $\{u^h\}_{h>0}$,
defined on $S^h$, under the assumption of smallness on their energy.
In what follows, by $C$ we denote an arbitrary positive constant, 
depending on the geometry of $S$ but not on $h$ or the vector mapping under consideration.
In all proofs, the convergences are understood up to a subsequence, 
unless stated otherwise.

\medskip

We will work under the following hypothesis:

\begin{equation*} \mathbf{(H)}\left[~~~~~~~~~~~~~~~
\begin{minipage}{11cm}
A sequence of vector mappings $u^h\in W^{1,2}(S^h, \mathbb{R}^3)$
and a sequence of positive numbers $e^h$ satisfy, for small $h>0$:
\begin{itemize}
\item[(i)] $\displaystyle \frac{1}{h} \int_{S^h}W(\nabla u^h) \leq C e^h$,
\item[(ii)] $\displaystyle \lim_{h\to 0} e^h/h^2 = 0$.
\end{itemize}
\end{minipage}\right.
\end{equation*}

\medskip

As for the flat case in \cite{FJMhier}, the first crucial step is the following 
approximation result:
\begin{lemma}\label{appr}
For each $u^h$ as in {\bf (H)}
there exist a matrix field $R^h\in W^{1,2}(S, \mathbb{R}^{3\times 3})$ such that:
$$R^h(x) \in SO(3) \qquad \forall x\in S,$$
and a matrix $Q^h\in SO(3)$ such that:
\begin{itemize}
\item[(i)] $\displaystyle \|\nabla u^h - R^h\pi\|_{L^2(S^h)} \leq C h^{1/2}\sqrt{e^h},$
\item[(ii)] $\|\nabla R^h\|_{L^2(S)} \leq Ch^{-1} \sqrt{e^h},$
\item[(iii)] $\|(Q^h)^TR^h - \mathrm{Id}\|_{L^p(S)} \leq Ch^{-1} \sqrt{e^h},$ for all $p\in [1,\infty)$,
\end{itemize}
\end{lemma}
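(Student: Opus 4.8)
The plan is to reduce the statement to the flat, quantitative rigidity estimate of Friesecke--James--M\"uller (which in the shell setting will be restated and sketched in Appendix~A), applied locally on small coordinate patches of $S$, and then to glue the resulting local rotation fields into a single global field $R^h$ by a standard averaging-plus-Poincar\'e argument. First I would fix a finite atlas of $S$ by bi-Lipschitz coordinate charts $\phi_\alpha: U_\alpha \to S$, with the $U_\alpha$ bounded Lipschitz domains in $\mathbb{R}^2$; since $S$ is compact and $\mathcal{C}^{1,1}$, finitely many charts suffice and all constants below depend only on this atlas (hence only on the geometry of $S$). For each $\alpha$, the product neighborhood $\phi_\alpha(U_\alpha)^h = \{x + t\vec n(x) : x \in \phi_\alpha(U_\alpha),\ -h/2 < t < h/2\}$ is, after the obvious change of variables straightening out the normal direction (using $1/2 < |\mathrm{Id} + t\Pi(x)| < 3/2$, so the Jacobian is comparable to $1$), bi-Lipschitz equivalent to a cylinder $U_\alpha \times (-h/2, h/2)$ with constants independent of $h$. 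On such a thin cylindrical domain the geometric rigidity estimate gives a constant rotation $\bar R_\alpha^h \in SO(3)$ with $\|\nabla u^h - \bar R_\alpha^h\|_{L^2(\phi_\alpha(U_\alpha)^h)}^2 \le C \int_{\phi_\alpha(U_\alpha)^h} \mathrm{dist}^2(\nabla u^h, SO(3)) \le C' h e^h$, using hypothesis (H)(i) and the lower bound $W(F) \ge C\,\mathrm{dist}^2(F,SO(3))$.

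The next step is to upgrade the piecewise-constant rotations to a $W^{1,2}$ field $R^h$ on $S$. Here I would follow the now-standard construction (as in \cite{FJMhier, FJMM_cr}): subdivide each chart domain into a grid of squares of side comparable to $h$, apply the local estimate on each square and on overlapping pairs of adjacent squares, obtaining constant rotations whose mutual differences are controlled by $C h^{-1}\sqrt{e^h}$ in the appropriate averaged sense; then mollify/interpolate these constants on scale $h$ to produce $R^h \in W^{1,2}(S, \mathbb{R}^{3\times 3})$ taking values in $SO(3)$ (projecting back onto $SO(3)$ is legitimate because the interpolant stays in the neighborhood $\mathcal{O}$ once $e^h/h^2 \to 0$, which is exactly (H)(ii)). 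This construction yields simultaneously (i) $\|\nabla u^h - R^h\pi\|_{L^2(S^h)} \le C h^{1/2}\sqrt{e^h}$ — the extra $h^{1/2}$ over the patchwise bound is just the thickness factor from integrating in $t$ — and (ii) $\|\nabla R^h\|_{L^2(S)} \le C h^{-1}\sqrt{e^h}$, since the gradient of the interpolant on scale $h$ is $\lesssim h^{-1}$ times the jump size and the jumps are $\lesssim h^{-1}\sqrt{e^h}$ only after multiplying by the patch measure $\sim h^2$, giving the stated bound after summing.

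Finally, for (iii) I would set $Q^h$ to be (a rotation close to) the average $\fint_S R^h$, or more precisely the nearest rotation to it. From (ii) and the Poincar\'e inequality on the connected surface $S$ one gets $\|R^h - \fint_S R^h\|_{L^2(S)} \le C\|\nabla R^h\|_{L^2(S)} \le C h^{-1}\sqrt{e^h}$; since $R^h$ is bounded in $L^\infty$ by $1$ and $S$ has finite measure, interpolation between $L^2$ and $L^\infty$ upgrades this to the $L^p$ bound for every $p \in [1,\infty)$. Replacing $\fint_S R^h$ by the nearby $Q^h \in SO(3)$ costs only $O(\|\nabla R^h\|_{L^2}^2)$, which is lower order, so $\|(Q^h)^T R^h - \mathrm{Id}\|_{L^p(S)} = \|R^h - Q^h\|_{L^p(S)} \le C h^{-1}\sqrt{e^h}$.

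I expect the main obstacle to be purely technical rather than conceptual: carefully carrying out the patching of the local constant rotations into a global $W^{1,2}$ field while tracking all the $h$-powers — in particular verifying that the grid scale $\sim h$ is the correct choice balancing the approximation error against the gradient cost, and that the projection onto $SO(3)$ in the overlap regions is well defined, which is where hypothesis (H)(ii) enters decisively. The genuinely new point compared to the flat plate case is only bookkeeping for the curved reference metric and the factor $\det(\mathrm{Id}+t\Pi)$, all of which is harmless because that factor is uniformly close to $1$.
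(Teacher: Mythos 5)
Your overall strategy is the right one and matches what the paper actually does: the paper disposes of Lemma \ref{appr} in one line by invoking the approximation result of Appendix A (Lemma \ref{approx}) together with the bound $E(u^h,S^h)\le C\,h\,e^h$ coming from $W(F)\ge C\,\mathrm{dist}^2(F,SO(3))$ and \textbf{(H)}(i), and the content of Appendix A is precisely a local-rigidity-at-scale-$h$ argument followed by mollification, projection onto $SO(3)$, and an averaging step for $Q^h$. Your discrete grid-plus-interpolation gluing is a standard variant of the paper's continuous mollification with the kernel $\eta_x$, and your power counting for (i) and (ii) comes out correctly. However, there are two genuine problems. First, the claim in your opening paragraph that the rigidity estimate on the whole thin chart $U_\alpha\times(-h/2,h/2)$ holds with a constant independent of $h$ is false: the constant in Theorem \ref{approx_basic} is uniform only over \emph{uniformly} bi-Lipschitz images of a ball, and the family $U_\alpha\times(-h/2,h/2)$ with $U_\alpha$ of fixed size degenerates as $h\to0$ (this is exactly why one must work on patches of diameter comparable to $h$, as you then do in your second paragraph; the first paragraph's ``global'' constant rotation $\bar R^h_\alpha$ should simply be discarded).

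Second, and more seriously, your argument for (iii) does not give the stated estimate for $p>2$. Interpolating between $\|R^h-Q^h\|_{L^2(S)}\le Ch^{-1}\sqrt{e^h}$ and $\|R^h-Q^h\|_{L^\infty}\le C$ yields only
$$\|R^h-Q^h\|_{L^p(S)}\le C\,\bigl(h^{-1}\sqrt{e^h}\bigr)^{2/p},$$
and since $h^{-1}\sqrt{e^h}\to0$ by \textbf{(H)}(ii), the quantity $(h^{-1}\sqrt{e^h})^{2/p}$ is \emph{larger} than $h^{-1}\sqrt{e^h}$ for every $p>2$; the interpolation inequality goes the wrong way here. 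The correct tool, which is what the paper uses in its inequality (\ref{A:sette}), is the two-dimensional Sobolev embedding $W^{1,2}(S)\hookrightarrow L^p(S)$ for all $p<\infty$: combined with the Poincar\'e inequality it gives $\|R^h-\fint_S R^h\|_{L^p(S)}\le C\|R^h-\fint_S R^h\|_{W^{1,2}(S)}\le C\|\nabla R^h\|_{L^2(S)}\le Ch^{-1}\sqrt{e^h}$, after which replacing $\fint_S R^h$ by the nearest rotation $Q^h$ costs only a comparable amount. With that substitution (and the deletion of the spurious whole-chart rigidity claim), your proof is complete and coincides in substance with the paper's.
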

The proof follows from Lemma \ref{approx} given in Appendix A, in view of:
$$E(u^h, S^h) = \int_{S^h}\mathrm{dist }^2(\nabla u^h, SO(3)) 
\leq C \int_{S^h} W(\nabla u^h) \leq C h e^h$$
so that $\lim_{h\to 0} h^{-3} E(u^h, S^h) = 0$ by hypothesis {\bf (H)}.

\begin{lemma} \label{lem3.2}
Assume {\bf (H)} and let $R^h, Q^h$ be given as in Lemma \ref{appr}.
There holds:
\begin{itemize}
\item[(i)] $\displaystyle \lim_{h\to 0}~ (Q^{h})^TR^{h} =\mathrm{Id}$, in $W^{1,2}(S)$ and in
$L^p(S)$.
\end{itemize}
Moreover, there exists a $W^{1,2}$ skew-symmetric matrix fields $A:S\longrightarrow so(3)$ such that:
\begin{itemize}
\item[(ii)] $\displaystyle \lim_{h\to 0}~ \frac{h}{\sqrt{e^{h}}} \left( (Q^{h})^TR^{h} 
- \mathrm{Id}\right) = A$, weakly in $W^{1,2}(S)$ and (strongly) in $L^p(S)$.
\item[(iii)] $\displaystyle \lim_{h\to 0}~ \frac{h^2}{e^{h}} \mathrm{sym }\left( (Q^{h})^TR^{h} 
- \mathrm{Id}\right) = \frac{1}{2}A^2$, in $L^p(S)$.
\end{itemize}
In (ii) and (iii) convergence is up to a subsequence (that we do not relabel).
In (i), (ii), and (iii) the appropriate convergence holds for all $p\in [1,\infty)$.
\end{lemma}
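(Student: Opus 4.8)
The plan is to build everything on the three estimates of Lemma \ref{appr}, using the hypothesis $e^h/h^2\to 0$ to upgrade them to the asserted convergences. First I would prove (i). From Lemma \ref{appr}(iii) we already have $(Q^h)^TR^h-\mathrm{Id}\to 0$ in $L^p(S)$ for every finite $p$, since $h^{-1}\sqrt{e^h}=\sqrt{e^h/h^2}\to 0$ under {\bf (H)}(ii). To get $W^{1,2}$ convergence, note $\nabla((Q^h)^TR^h)=(Q^h)^T\nabla R^h$, so by Lemma \ref{appr}(ii), $\|\nabla((Q^h)^TR^h)\|_{L^2(S)}\le Ch^{-1}\sqrt{e^h}\to 0$; combined with the $L^2$ convergence of the functions themselves this gives $(Q^h)^TR^h\to\mathrm{Id}$ strongly in $W^{1,2}(S)$.

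For (ii), set $A^h=\frac{h}{\sqrt{e^h}}\big((Q^h)^TR^h-\mathrm{Id}\big)$. Lemma \ref{appr}(ii) gives $\|\nabla A^h\|_{L^2(S)}=\frac{h}{\sqrt{e^h}}\|(Q^h)^T\nabla R^h\|_{L^2(S)}\le C$, and Lemma \ref{appr}(iii) gives $\|A^h\|_{L^p(S)}\le C$ for every finite $p$; hence $\{A^h\}$ is bounded in $W^{1,2}(S,\mathbb{R}^{3\times 3})$. Extracting a subsequence, $A^h\rightharpoonup A$ weakly in $W^{1,2}(S)$ and, by Rellich--Kondrachov on the two-dimensional surface $S$, strongly in $L^p(S)$ for all $p\in[1,\infty)$. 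It remains to check $A(x)\in so(3)$ a.e. Since $R^h(x),Q^h\in SO(3)$, the matrix $P^h:=(Q^h)^TR^h$ satisfies $(P^h)^TP^h=\mathrm{Id}$, i.e. $(P^h-\mathrm{Id})^T(P^h-\mathrm{Id})+(P^h-\mathrm{Id})^T+(P^h-\mathrm{Id})=0$. Multiplying by $\frac{h}{\sqrt{e^h}}$ and using that $\frac{h}{\sqrt{e^h}}(P^h-\mathrm{Id})=A^h$ is bounded in $L^p$ while $P^h-\mathrm{Id}\to 0$ in $L^p$, the quadratic term $\frac{h}{\sqrt{e^h}}(P^h-\mathrm{Id})^T(P^h-\mathrm{Id})=A^h(P^h-\mathrm{Id})$ tends to $0$ in $L^1$, so passing to the limit yields $A^T+A=0$. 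Skew-symmetry and $W^{1,2}$ regularity of $A$ follow.

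For (iii), rewrite the $SO(3)$ constraint for $P^h$ as
\begin{equation*}
\mathrm{sym}(P^h-\mathrm{Id}) = -\tfrac12 (P^h-\mathrm{Id})^T(P^h-\mathrm{Id}).
\end{equation*}
Multiplying by $h^2/e^h$ and noting $\frac{h^2}{e^h}(P^h-\mathrm{Id})^T(P^h-\mathrm{Id}) = (A^h)^T A^h$, we get $\frac{h^2}{e^h}\,\mathrm{sym}(P^h-\mathrm{Id}) = -\tfrac12 (A^h)^TA^h$. Since $A^h\to A$ strongly in $L^p(S)$ for every finite $p$, the product $(A^h)^TA^h\to A^TA$ strongly in $L^p(S)$ (e.g. by Cauchy--Schwarz, using boundedness in, say, $L^{2p}$). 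Because $A\in so(3)$, $A^TA=-A^2$, hence $\frac{h^2}{e^h}\,\mathrm{sym}(P^h-\mathrm{Id})\to \tfrac12 A^2$ in $L^p(S)$, which is the claim. The only genuinely delicate point is the identification of the limit's skew-symmetry in (ii): one must be careful that the quadratic remainder really is negligible, which is why the $L^p$ bounds for \emph{all} finite $p$ in Lemma \ref{appr}(iii) (rather than just $L^2$) are used — this lets us absorb the product of a bounded-in-$L^p$ factor with a vanishing-in-$L^q$ factor. Everything else is a routine application of weak compactness in $W^{1,2}$ together with compact Sobolev embedding on the compact surface $S$.
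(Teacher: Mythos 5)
Your proposal is correct and follows essentially the same route as the paper: extract $A$ by weak $W^{1,2}$ compactness plus the compact embedding into $L^p$, then exploit the $SO(3)$ identity $2\,\mathrm{sym}(P^h-\mathrm{Id})=-(P^h-\mathrm{Id})^T(P^h-\mathrm{Id})$ to get both the skew-symmetry of $A$ and part (iii) in one stroke (the paper does exactly this via $A^h+(A^h)^T=-\frac{\sqrt{e^h}}{h}(A^h)^TA^h$). The only blemish is a harmless missing transpose where you write the quadratic remainder as $A^h(P^h-\mathrm{Id})$ instead of $(A^h)^T(P^h-\mathrm{Id})$.
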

\begin{proof}
The convergences in (i) follow from Lemma
\ref{appr} in view of {\bf (H)}.
To prove (ii), notice that the sequence:
$$A^h = \frac{h}{\sqrt{e^h}} \left((Q^h)^T R^h - \mathrm{Id}\right)$$
is bounded in $W^{1,2}(S)$ and so it has a weakly converging subsequence. By compact embedding of
$W^{1,2}(S)$ into $L^p(S)$ the convergence is strong in $L^p(S)$. One has:
$$A^h + (A^h)^T = \frac{h}{\sqrt{e^h}} \left((Q^h)^T R^h +  (R^h)^TQ^h - 2\mathrm{Id} \right) 
= - \frac{\sqrt{e^h}}{h}(A^h)^T\cdot A^h. $$
The latter converges to $0$ in $L^p(S)$, and therefore the limit matrix field $A$ is skew-symmetric.
The above equality proves as well that:
$$\lim_{h\to 0}  \frac{h}{\sqrt{e^h}} \mbox{ sym } A^h = \frac{1}{2} A^2$$
in $L^p(S)$, which implies (iii).
\end{proof}

Recall the rescaling:
\begin{equation*}
y^h(x + t\vec n(x)) = u^h\left(x + t{h}/{h_0}\vec n(x)\right) 
\qquad \forall x\in S \quad \forall t\in (-h_0/2, h_0/2),
\end{equation*}
so that $y^h\in W^{1,2}(S^{h_0}, \mathbb{R}^3)$.
Also, define:
\begin{equation*}
\nabla_h y^h(x + t\vec n(x)) = \nabla u^h\left(x + t{h}/{h_0}\vec n(x)\right)
\end{equation*}

By a straightforward calculation we obtain:
\begin{proposition}\label{formule}
For each $x\in S$, $t\in (-h_0/2, h_0/2)$ and $\tau\in T_xS$ there hold:
\begin{equation*}
\begin{split}
\partial_{\tau} y^h(x+t\vec n) &= 
\nabla_h y^h\left(x + t\vec n\right)\left(\mathrm{Id} + t{h}/{h_0}\Pi(x)\right)
(\mathrm{Id} + t\Pi(x))^{-1} \tau \\
{\partial_{\vec n}} y^h(x+t\vec n) &= \frac{h}{h_0} 
\nabla_h y^h\left(x + t\vec n\right)\vec n(x).
\end{split}
\end{equation*}
Moreover, for $I^h(y^h) = \frac{1}{h} \int_{S^h}W(\nabla u^h)$ one has:
\begin{equation*}
\begin{split}
I^h(y^h) &= \frac{1}{h_0}\int_{S^{h_0}} W(\nabla_h y^h (x+t\vec n))\cdot 
\det \left[\left(\mathrm{Id} + t{h}/{h_0}\Pi\right)
(\mathrm{Id} + t\Pi)^{-1}\right]\\
&= \int_S \fint_{-h_0/2}^{h_0/2} W(\nabla_h y^h (x+t\vec n))\cdot 
\det \left[\mathrm{Id} + t{h}/{h_0}\Pi(x)\right]~\mathrm{d}t~\mathrm{d}x.
\end{split}
\end{equation*}
\end{proposition}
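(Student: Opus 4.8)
The plan is to express $y^h$ as a composition $y^h=u^h\circ\Lambda^h$ with the single change of variables $\Lambda^h\colon S^{h_0}\to S^h$, $\Lambda^h(x+t\vec n(x))=x+t\frac{h}{h_0}\vec n(x)$, and to read off all three assertions from the differential $\nabla\Lambda^h$. For $h$ small, $\Lambda^h$ is Lipschitz with Lipschitz inverse by the tubular-neighborhood condition imposed on $h_0$, so $\nabla y^h=\big((\nabla u^h)\circ\Lambda^h\big)\,\nabla\Lambda^h=(\nabla_h y^h)\,\nabla\Lambda^h$ holds almost everywhere; this is all that is needed, since $u^h\in W^{1,2}$ and $\Pi\in L^\infty$ (recall $S\in\mathcal C^{1,1}$).

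\emph{Step 1 (the differential of $\Lambda^h$).} Fix $x\in S$, $t\in(-h_0/2,h_0/2)$ and an orthonormal basis $e_1,e_2$ of $T_xS$. Differentiating at $s=0$ the curves $s\mapsto\gamma(s)+t\vec n(\gamma(s))$, where $\gamma$ is a curve on $S$ with $\gamma(0)=x$ and $\gamma'(0)=e_i$, and using $\partial_{e_i}\vec n=\Pi(x)e_i$, these curves have velocity $(\mathrm{Id}+t\Pi(x))e_i$, whereas their images under $\Lambda^h$ have velocity $(\mathrm{Id}+t\frac{h}{h_0}\Pi(x))e_i$; differentiating along the normal line $s\mapsto x+s\vec n(x)$ gives $\nabla\Lambda^h(x+t\vec n)\vec n(x)=\frac{h}{h_0}\vec n(x)$. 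Since $\{(\mathrm{Id}+t\Pi(x))e_1,(\mathrm{Id}+t\Pi(x))e_2,\vec n(x)\}$ is a basis of $\mathbb R^3$ (the map $\mathrm{Id}+t\Pi(x)$ being invertible on $T_xS$ by the choice of $h_0$, and $\vec n(x)\perp T_xS$), this determines $\nabla\Lambda^h(x+t\vec n)$ completely: writing a tangent vector as $\tau=(\mathrm{Id}+t\Pi(x))\eta$ with $\eta\in T_xS$ gives $\nabla\Lambda^h(x+t\vec n)\tau=(\mathrm{Id}+t\frac{h}{h_0}\Pi(x))(\mathrm{Id}+t\Pi(x))^{-1}\tau$. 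Combining this with $\nabla y^h=(\nabla_h y^h)\,\nabla\Lambda^h$ yields the two derivative formulas, $\partial_\tau y^h$ and $\partial_{\vec n}y^h$ denoting the directional derivatives of $y^h$ in the directions $\tau\in T_xS$ and $\vec n(x)$.

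\emph{Step 2 (determinant and energy).} Since $\Pi(x)\vec n(x)=0$, in the frame above $\nabla\Lambda^h(x+t\vec n)$ is block diagonal, with tangential block $(\mathrm{Id}+t\frac{h}{h_0}\Pi(x))(\mathrm{Id}+t\Pi(x))^{-1}$ restricted to $T_xS$ and normal entry $\frac{h}{h_0}$, whence
\[
\det\nabla\Lambda^h(x+t\vec n)=\frac{h}{h_0}\,\det\Big[(\mathrm{Id}+t\tfrac{h}{h_0}\Pi(x))(\mathrm{Id}+t\Pi(x))^{-1}\Big].
\]
Performing the change of variables $z=\Lambda^h(\zeta)$ in $I^h(y^h)=\frac1h\int_{S^h}W(\nabla u^h)$ and using $(\nabla u^h)\circ\Lambda^h=\nabla_h y^h$, the prefactor $\frac1h\cdot\frac{h}{h_0}$ collapses to $\frac1{h_0}$ and gives the first expression for $I^h(y^h)$. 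For the second expression, rewrite the integral over $S^{h_0}$ in the coordinates $(x,t)\mapsto x+t\vec n(x)$, whose Jacobian is $\det(\mathrm{Id}+t\Pi(x))$; this cancels the factor $1/\det(\mathrm{Id}+t\Pi(x))$ implicit in the determinant above, leaving $\det(\mathrm{Id}+t\frac{h}{h_0}\Pi(x))$, and, recalling $\fint_{-h_0/2}^{h_0/2}=\frac1{h_0}\int_{-h_0/2}^{h_0/2}$, one obtains the averaged form.

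The computation is entirely elementary; the only points that need attention are the determinant bookkeeping in the non-orthonormal frame $\{(\mathrm{Id}+t\Pi(x))e_i,\vec n(x)\}$ (in particular that the normal direction contributes the factor $1$ to $\det(\mathrm{Id}+t\Pi(x))$ but the factor $\frac{h}{h_0}$ to $\det\nabla\Lambda^h$), and the remark that, although $S$ is merely $\mathcal C^{1,1}$ and $u^h$ merely $W^{1,2}$, all identities are understood a.e.\ and the chain rule is legitimate thanks to the bi-Lipschitz regularity of $\Lambda^h$.
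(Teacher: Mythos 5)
Your proof is correct and is precisely the ``straightforward calculation'' the paper alludes to without writing out: the chain rule for $y^h=u^h\circ\Lambda^h$ with the block-diagonal differential $\nabla\Lambda^h$ in the frame $\{(\mathrm{Id}+t\Pi)e_i,\vec n\}$, followed by the two changes of variables $z=\Lambda^h(\zeta)$ and $(x,t)\mapsto x+t\vec n(x)$. The bookkeeping of the factors $h/h_0$ and $\det(\mathrm{Id}+t\Pi)$ checks out, and your remarks on the a.e.\ validity of the chain rule under the bi-Lipschitz assumption on $\Lambda^h$ are appropriate for the stated regularity.
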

Also, directly from Lemma \ref{appr} (i) and Lemma \ref{lem3.2} (ii) 
there follows:
\begin{proposition}\label{help}
Assume {\bf(H)}. Then:
\begin{itemize}
\item[(i)] $\displaystyle \|\nabla_h y^h - R^h\pi\|_{L^2(S^{h_0})}\leq C\sqrt{e^h}$. 
\item[(ii)] $\displaystyle \lim_{h\to 0} \frac{h}{\sqrt{e^h}}\left((Q^h)^T\nabla_h y^h - \mathrm{Id}\right)
= A\pi$, in $L^{2}(S^{h_0})$ up to a subsequence.
\end{itemize}
\end{proposition}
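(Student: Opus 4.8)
The statement to prove is Proposition~\ref{help}, which asserts two convergences: the $L^2$ bound on $\nabla_h y^h - R^h\pi$ on the fixed domain $S^{h_0}$, and the limit identifying $\frac{h}{\sqrt{e^h}}\big((Q^h)^T\nabla_h y^h - \mathrm{Id}\big)\to A\pi$.

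\medskip

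\textbf{Plan for (i).} The idea is to transport the estimate of Lemma~\ref{appr}(i), which lives on the shrinking domain $S^h$, to the fixed domain $S^{h_0}$ via the change of variables $z = x+t\vec n \mapsto x + t h/h_0\, \vec n$ relating $y^h$ to $u^h$. First I would use the definition $\nabla_h y^h(x+t\vec n) = \nabla u^h(x + th/h_0\,\vec n)$ together with the fact that $R^h$ and $\pi$ depend only on the base point $x\in S$ (so $R^h\pi$ is unchanged under this rescaling in $t$). Then the integral $\int_{S^{h_0}} |\nabla_h y^h - R^h\pi|^2$ becomes, after the substitution $t\mapsto th/h_0$, essentially $\frac{h_0}{h}\int_{S^h}|\nabla u^h - R^h\pi|^2$ up to the Jacobian factor $\det[(\mathrm{Id}+t h/h_0\,\Pi)(\mathrm{Id}+t\Pi)^{-1}]$, which is bounded above and below uniformly in $h$ and $t$ by our standing assumption $1/2 < |\mathrm{Id}+t\Pi| < 3/2$. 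Hence $\|\nabla_h y^h - R^h\pi\|_{L^2(S^{h_0})}^2 \leq C h^{-1}\|\nabla u^h - R^h\pi\|_{L^2(S^h)}^2 \leq C h^{-1}\cdot h e^h = C e^h$ by Lemma~\ref{appr}(i), which is exactly the claim.

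\medskip

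\textbf{Plan for (ii).} I would write the decomposition
\begin{equation*}
\frac{h}{\sqrt{e^h}}\big((Q^h)^T\nabla_h y^h - \mathrm{Id}\big)
= \frac{h}{\sqrt{e^h}}(Q^h)^T\big(\nabla_h y^h - R^h\pi\big)
+ \frac{h}{\sqrt{e^h}}\big((Q^h)^TR^h - \mathrm{Id}\big)\pi.
\end{equation*}
The first term is controlled in $L^2(S^{h_0})$ by part (i): its norm is at most $\frac{h}{\sqrt{e^h}}\cdot C\sqrt{e^h} = Ch \to 0$. For the second term, Lemma~\ref{lem3.2}(ii) gives $\frac{h}{\sqrt{e^h}}\big((Q^h)^TR^h-\mathrm{Id}\big)\to A$ strongly in $L^p(S)$ for every $p<\infty$, hence in particular in $L^2(S)$; composing with the projection $\pi$ (which is a fixed Lipschitz map $S^{h_0}\to S$ with bounded Jacobian) preserves $L^2$ convergence, so $\frac{h}{\sqrt{e^h}}\big((Q^h)^TR^h-\mathrm{Id}\big)\pi \to A\pi$ in $L^2(S^{h_0})$. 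Adding the two contributions and passing to the subsequence along which Lemma~\ref{lem3.2}(ii) holds yields (ii).

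\medskip

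\textbf{Main obstacle.} There is no deep difficulty here: the proposition is essentially a bookkeeping exercise combining Lemma~\ref{appr}(i) and Lemma~\ref{lem3.2}(ii). The one point requiring care is the change-of-variables step in (i)—tracking the Jacobian factor and the $h/h_0$ rescaling in the normal direction, and making sure the $h^{-1}$ loss from the thin domain is exactly compensated by the $h^{1/2}$ gain squared in Lemma~\ref{appr}(i). The only other subtlety is that all convergences are along subsequences inherited from Lemma~\ref{lem3.2}, which the statement already acknowledges.
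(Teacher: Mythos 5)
Your proof is correct and follows exactly the route the paper intends: the paper itself states that Proposition \ref{help} follows ``directly from Lemma \ref{appr} (i) and Lemma \ref{lem3.2} (ii)'' and omits the details, which are precisely the change-of-variables bookkeeping for (i) and the two-term decomposition via $R^h$ for (ii) that you carry out. The balance $h^{-1}\cdot h\,e^h = e^h$ in (i) and the uniform bounds on the Jacobian factors are handled correctly.
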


We will consider the corrected by rigid motions deformations 
$\tilde y^h\in W^{1,2}(S^{h_0},\mathbb{R}^3)$ and averaged displacements
$V^h\in W^{1,2}(S, \mathbb{R}^3)$:
\begin{equation*}
\tilde y^h = (Q^h)^T y^h - c^h,\qquad
V^h = V^h[\tilde y^h] 
= \frac{h}{\sqrt{e^h}}\fint_{-h_0/2}^{h_0/2} \tilde y^h(x+t\vec n) - x ~\mathrm{d}t,
\end{equation*}
where $c^h = \fint_S\fint_{-h_0/2}^{h_0/2} (Q^h)^T y^h - x ~\mbox{d}t~\mbox{d}x$,
so that $\fint_S V^h = 0$.

\begin{lemma}\label{lem3.4}
Assume {\bf(H)}. Then:
\begin{itemize}
\item[(i)] $\displaystyle \lim_{h\to 0} \tilde y^{h} = \pi,$ in $W^{1,2}(S^{h_0}).$
\item[(ii)] $\displaystyle \lim_{h\to 0} V^{h} = V,$  in $W^{1,2}(S)$ up to a subsequence.
\end{itemize}
The vector field $V$ in (ii) has regularity $W^{2,2}(S, \mathbb{R}^3)$ 
and it satisfies $\partial_\tau V (x) = A(x) \tau$
for all $\tau\in T_x S$. The $W^{1,2}$ skew-symmetric matrix field
$A:S\longrightarrow so(3)$ is as in Lemma \ref{lem3.2}.
\end{lemma}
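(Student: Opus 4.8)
The plan is to establish the two convergences in Lemma~\ref{lem3.4} by transferring the information already obtained for the rotation fields $R^h$ to the deformations themselves, via the approximation estimates of Lemma~\ref{appr} and the formulas of Proposition~\ref{formule}. First I would handle (i): normalizing by $Q^h$ and subtracting the constant $c^h$ does not change gradients, so by Proposition~\ref{help}(i) we have $\|\nabla_h\tilde y^h - (Q^h)^TR^h\pi\|_{L^2(S^{h_0})}\le C\sqrt{e^h}$, and since $(Q^h)^TR^h\to\mathrm{Id}$ in every $L^p(S)$ by Lemma~\ref{lem3.2}(i) while $\mathrm{dist}(\nabla\pi,\mathrm{Id})$ is controlled by the shape operator term $t\Pi$, one gets $\nabla_h\tilde y^h\to\mathrm{Id}$ in $L^2(S^{h_0})$; combining with the relations in Proposition~\ref{formule} between $\nabla_h y^h$ and the true gradient $\nabla\tilde y^h$ (the correction factors $(\mathrm{Id}+t(h/h_0)\Pi)(\mathrm{Id}+t\Pi)^{-1}$ and $(h/h_0)$ tend to $\mathrm{Id}$ and $0$ respectively as $h\to0$, but here $h$ ranges in a fixed domain $S^{h_0}$ so one must be a little careful — the point is that $\nabla\tilde y^h$ is $\nabla_h\tilde y^h$ composed with these factors, which are bounded), one deduces $\nabla\tilde y^h\to\nabla\pi$ in $L^2$. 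Since $\fint_S\fint \tilde y^h - x = 0$ by the choice of $c^h$, Poincaré's inequality upgrades this to $W^{1,2}(S^{h_0})$ convergence $\tilde y^h\to\pi$.

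Next, for (ii), I would write $V^h = \frac{h}{\sqrt{e^h}}\fint_{-h_0/2}^{h_0/2}(\tilde y^h(x+t\vec n)-x)\,\mathrm{d}t$ and compute its tangential gradient using Proposition~\ref{formule}. Differentiating under the integral, $\partial_\tau V^h(x)$ is (up to lower order in $h$) $\frac{h}{\sqrt{e^h}}\fint(\partial_\tau\tilde y^h - \tau)\,\mathrm{d}t$, and replacing $\nabla_h\tilde y^h$ by $(Q^h)^TR^h$ at cost $O(\sqrt{e^h})$ via Proposition~\ref{help}(i), the leading contribution is $\frac{h}{\sqrt{e^h}}((Q^h)^TR^h - \mathrm{Id})$ acting on $\tau$, whose limit is exactly $A(x)\tau$ by Lemma~\ref{lem3.2}(ii). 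Thus $\nabla V^h$ is bounded in $L^2(S)$, hence (with $\fint_S V^h = 0$) $V^h$ is bounded in $W^{1,2}(S)$, so a subsequence converges weakly in $W^{1,2}$ and strongly in $L^2$; identifying the weak limit of $\nabla V^h$ with $A\pi$ (equivalently $\partial_\tau V = A\tau$ on $T_xS$) gives both the convergence and the relation $\partial_\tau V(x)=A(x)\tau$. Upgrading to strong $W^{1,2}$ convergence then follows because $\nabla V^h\to A|_{T\!S}$ strongly in $L^2$: indeed, the error term $\nabla V^h - \frac{h}{\sqrt{e^h}}((Q^h)^TR^h-\mathrm{Id})\pi$ is $O(\sqrt{e^h}) + O(h/\sqrt{e^h}\cdot \text{(curvature terms)})$ and $\frac{h}{\sqrt{e^h}}((Q^h)^TR^h-\mathrm{Id})$ converges \emph{strongly} in $L^p$ by Lemma~\ref{lem3.2}(ii), so no oscillation survives.

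Finally, for the regularity claim, the field $A\in W^{1,2}(S,\mathbb{R}^{3\times3})$ from Lemma~\ref{lem3.2} already has a $W^{1,2}$ tangential derivative, and since $\partial_\tau V = A\tau$ with $A\in W^{1,2}$ and the geometry ($\vec n$, $\Pi$) of $S$ is $\mathcal{C}^{1,1}$ (hence $\vec n\in W^{1,\infty}$), differentiating once more shows $\nabla V = A|_{T\!S} \in W^{1,2}$, i.e.\ $V\in W^{2,2}(S,\mathbb{R}^3)$; one also needs to check that the full gradient of $V$ (including the normal direction, obtained from skew-symmetry of $A$ and the structure $\partial_\tau V = A\tau$) inherits this regularity, which is where the standard computation identifying $\nabla V$ with the restriction of the matrix field $A$ is invoked. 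I expect the main obstacle to be the careful bookkeeping in passing from $\nabla_h y^h$ (the rescaled three-dimensional gradient) to the genuine surface gradient of the averaged displacement $V^h$ — keeping track of which error terms are $O(\sqrt{e^h})$, which are $O(h\cdot\frac{h}{\sqrt{e^h}})=O(h^2/\sqrt{e^h})$ (and hence negligible, since $e^h/h^2\to\infty$ would be needed to kill them, but under \textbf{(H)}(ii) one has $e^h/h^2\to0$, so $h^2/\sqrt{e^h} = \sqrt{e^h}\cdot h^2/e^h$ — this must be examined: in fact $h^2/\sqrt{e^h}\to 0$ is NOT automatic, so the curvature corrections must be multiplied against the \emph{difference} $((Q^h)^TR^h-\mathrm{Id})$ which is itself $O(\sqrt{e^h}/h)$, giving $O(\sqrt{e^h})$ overall), and ensuring the differentiation under the $t$-integral is legitimate in the Sobolev sense.
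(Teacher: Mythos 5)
Your route is the same as the paper's: use Proposition \ref{formule} to express the tangential derivatives of $\tilde y^h$ through $\nabla_h y^h$, replace $\nabla_h y^h$ by $R^h\pi$ at cost $O(\sqrt{e^h})$ via Proposition \ref{help}(i), identify the limit of $\nabla V^h$ with $A_{tan}$ through the strong $L^p$ convergence in Lemma \ref{lem3.2}(ii), and conclude both (i) and (ii) by Poincar\'e inequalities using the normalizations $\fint_S V^h=0$ and $\fint_S\fint \tilde y^h-x=0$ (the paper phrases the latter step with a weighted Poincar\'e inequality to reconcile the product measure $\mathrm{d}t\,\mathrm{d}x$ with the volume measure on $S^{h_0}$, but the two are uniformly comparable, so your version is fine). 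The strong-convergence upgrade and the $W^{2,2}$ regularity of $V$ are also handled as in the paper.

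The one step that does not close as you wrote it is precisely the point you flag at the end. Substituting $\partial_\tau\big[\tilde y^h(x+t\vec n)\big]=(Q^h)^T\nabla_h y^h\,(\mathrm{Id}+t(h/h_0)\Pi)\tau$ gives, for $\tau\in T_xS$,
\begin{equation*}
\begin{split}
\nabla V^h(x)\tau ={}& \frac{h}{\sqrt{e^h}}\fint_{-h_0/2}^{h_0/2}\big[(Q^h)^T\nabla_h y^h-\mathrm{Id}\big]\tau~\mbox{d}t
+ \frac{h}{\sqrt{e^h}}\fint_{-h_0/2}^{h_0/2}\frac{th}{h_0}\Pi\tau~\mbox{d}t\\
& + \frac{h}{\sqrt{e^h}}\fint_{-h_0/2}^{h_0/2}\big[(Q^h)^T\nabla_h y^h-\mathrm{Id}\big]\frac{th}{h_0}\Pi\tau~\mbox{d}t.
\end{split}
\end{equation*}
Your proposed remedy --- pair every curvature correction with the difference $(Q^h)^T\nabla_h y^h-\mathrm{Id}=O(\sqrt{e^h}/h)$ --- disposes of the third term, which is then $O(h)$. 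But the second term is a \emph{pure} Jacobian correction that is not multiplied by any small difference; a crude bound on it is $Ch^2/\sqrt{e^h}$, which under {\bf (H)} alone need not be small (it is $O(1)$ for $e^h\approx h^4$ and diverges for faster-decaying $e^h$). It is harmless for a different reason: its coefficient is independent of $t$, so the symmetric average over the thickness annihilates it exactly, $\fint_{-h_0/2}^{h_0/2}t~\mbox{d}t=0$. This cancellation is implicit in the paper's decomposition (\ref{nabla_Vh}) and is the missing ingredient in your bookkeeping; once inserted, your argument is complete.
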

\begin{proof} {\bf 1.} 
In view of Proposition \ref{formule} and Proposition \ref{help} we have:
\begin{equation}\label{important}
\begin{split}
&\left\|\nabla_{tan}\tilde y^h - \left((Q^h)^T R^h\right)_{tan}\cdot 
(\mathrm{Id} + th/h_0 \Pi) (\mathrm{Id} + t\Pi)^{-1} 
\right\|_{L^2(S^{h_0})} \leq C\sqrt{e^h}\\
&\left\|\partial_{\vec n}\tilde y^h \right\|_{L^2(S^{h_0})} \leq
Ch\|\nabla_h y^h\|_{L^2(S^{h_0})} \leq Ch.
\end{split}
\end{equation}
To prove convergence of $V^h$, consider:
\begin{equation}\label{nabla_Vh}
\begin{split}
\nabla V^h(x) &= \frac{h}{\sqrt{e^h}} \fint_{-h_0/2}^{h_0/2} \nabla_{tan}\tilde y^h(x+t\vec n)
(\mathrm{Id} + t\Pi) - \mathrm{Id} ~\mbox{d}t\\ 
&= \frac{h}{\sqrt{e^h}} \fint_{-h_0/2}^{h_0/2} \Big(\nabla_{tan}\tilde y^h
- \left((Q^h)^T R^h\right)_{tan} (\mathrm{Id} + t\Pi)^{-1}\Big) (\mathrm{Id} + t\Pi) ~\mbox{d}t\\ 
&\quad + \frac{h}{\sqrt{e^h}} \Big((Q^h)^T R^h (x) - \mathrm{Id}\Big)_{tan}.
\end{split}
\end{equation}
We see that by (\ref{important}) the first term in the right hand side above converges to $0$
in $L^2(S^{h_0})$, as $h\to 0$.  The second term converges, up to a subsequence, 
to $A_{tan}$ by Lemma \ref{lem3.2} (ii).
Therefore $\nabla V^h$  converges to $A_{tan}$ in $L^2(S)$ and since $\fint_S V^h = 0$, we may use
Poincar\'e inequality on $S$ to deduce (ii).

{\bf 2.}
To prove (i), notice that by (\ref{important}) and Lemma \ref{lem3.2} we obtain the following 
convergences in $L^2(S^{h_0})$:
\begin{equation*}
\begin{split}
&\lim_{h\to 0}\nabla_{tan}\tilde y^h= (\mathrm{Id} + t\Pi)^{-1} = \nabla_{tan} \pi,\\
&\lim_{h\to 0}\partial_{\vec n}\tilde y^h = 0.
\end{split}
\end{equation*}
Therefore $\nabla\tilde y^h$ converges to $\nabla\pi$ in $L^2(S^{h_0})$.

Since the sequence $\{V^h\}$ is bounded in $L^2(S)$, it also follows that:
\begin{equation}\label{conve}
\lim_{h\to 0} \left\|\int_{-h_0/2}^{h_0/2}\tilde y^h - \pi ~\mbox{d}t\right\|_{L^2(S)} = 0.
\end{equation}
Now, let $g(x+t\vec n) = |\mbox{det } (\mathrm{Id} +t\Pi(x))|^{-1}$. Consider
the two terms in the right hand side of:
$$\|\tilde y^h - \pi\|_{L^2(S^{h_0})} \leq
\left\|(\tilde y^h - \pi) - \int_{S^{h_0}}(\tilde y^h - \pi)\cdot \frac{g}{\scriptstyle \int_{S^{h_0}} g}
\right\|_{L^2(S^{h_0})}
+ ~\left|\int_{S^{h_0}}(\tilde y^h - \pi)\cdot \frac{g}{\scriptstyle\int_{S^{h_0}} g}\right|.$$
The first term  can be bounded by means of the weighted Poincar\'e inequality, by
$\|\nabla (\tilde y^h -\pi)\|_{L^2(S^{h_0})}$ and therefore it converges to $0$ as $h\to 0$.
The second term converges to $0$ as well, in view of (\ref{conve}) and:
$$\left|\int_{S^{h_0}} (\tilde y^h - \pi)\cdot g\right| = 
\left|\int_S\int_{-h_0/2}^{h_0/2} \tilde y^h - \pi ~\mbox{d}t~\mbox{d}x\right| \leq
C \left\|\int_{-h_0/2}^{h_0/2}\tilde y^h - \pi ~\mbox{d}t\right\|_{L^2(S)}.$$
This justifies convergence of $\tilde y^h$ to $\pi$ in $L^{2}(S^{h_0})$ 
and ends the proof of (i).
\end{proof}

Towards the proof of Theorem \ref{thmainuno}, we need 
to consider the following sequence of matrix fields on $S^{h_0}$:
$$G^h = \frac{1}{\sqrt{e^h}} \Big((R^h)^T \nabla_h y^h - \mathrm{Id}\Big).$$
In view of Proposition \ref{help} (i), $2\mbox{sym} G^h$ is the $\sqrt{e^h}$ order term in the 
expansion of the nonlinear strain $(\nabla u^h)^T \nabla u^h$, at $\mbox{Id}$.
This expression will also play a major role in the expansion 
of the energy density at $\mathrm{Id}$:
$W(\nabla_h y^h) = W(\mathrm{Id} + \sqrt{e^h} G^h)$.

\begin{lemma}\label{lem3.6}
Assume {\bf (H)}.  Then the sequence $\{G^h\}$ as above has a subsequence, converging weakly
in $L^2(S^{h_0})$ to a matrix field $G$.  The tangential minor of $G$ is, moreover,
affine in the $\vec n$ direction.  More precisely:
$$\forall \tau\in T_x S \qquad 
G(x+t\vec n)\tau = G_0(x)\tau + \frac{t}{h_0} \cdot \Big(\nabla (A\vec n)(x) - A\Pi (x)\Big)\tau,$$
where $\displaystyle G_0(x) = \fint_{-h_0/2}^{h_0/2} G(x + t\vec n)~\mathrm{d}t$.
\end{lemma}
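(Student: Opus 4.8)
The plan is to first establish the weak $L^2$ compactness of $\{G^h\}$, and then to identify the affine-in-$t$ structure of its tangential minor by relating $G^h$ to the quantities already controlled in Lemma \ref{lem3.2} and Lemma \ref{lem3.4}. For the boundedness: from Proposition \ref{help}(i) we have $\|\nabla_h y^h - R^h\pi\|_{L^2(S^{h_0})}\leq C\sqrt{e^h}$, and since $(R^h)^T$ is pointwise a rotation, multiplying by $(R^h)^T$ preserves the $L^2$ norm, so $\|(R^h)^T\nabla_h y^h - (R^h)^T R^h\pi\|_{L^2}\leq C\sqrt{e^h}$. Now $(R^h)^TR^h = \mathrm{Id}$ and $\pi$ differs from $\mathrm{Id}$ only in that $\nabla\pi = \mathrm{Id} + \text{(stuff involving $\Pi$)}$ acting on tangent vectors; more precisely $\nabla_{tan}\pi(x+t\vec n) = (\mathrm{Id}+t\Pi)^{-1}$ on $T_xS$ and $\partial_{\vec n}\pi = 0$. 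Writing $G^h = \frac{1}{\sqrt{e^h}}((R^h)^T\nabla_h y^h - \mathrm{Id})$, one splits $\mathrm{Id} = (R^h)^TR^h\nabla\pi + ((\mathrm{Id}) - R^h{}^TR^h\nabla\pi)$; the first difference is $O(\sqrt{e^h})$ in $L^2$ by the above, and the residual term $\mathrm{Id} - \nabla\pi$ is $O(t)$, hence bounded; so $\|G^h\|_{L^2(S^{h_0})}\leq C$ and a subsequence converges weakly to some $G\in L^2(S^{h_0}, \mathbb{R}^{3\times 3})$.

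For the structure of $G_{tan}$, the idea is to pass to the limit in the identity expressing $\nabla_{tan}\tilde y^h$ in two ways. On one hand, by Proposition \ref{formule},
\begin{equation*}
\partial_\tau \tilde y^h(x+t\vec n) = (Q^h)^T\nabla_h y^h(x+t\vec n)(\mathrm{Id}+th/h_0\,\Pi)(\mathrm{Id}+t\Pi)^{-1}\tau,
\end{equation*}
and $(Q^h)^T\nabla_h y^h = (Q^h)^TR^h(\mathrm{Id}+\sqrt{e^h}G^h)$ by definition of $G^h$. On the other hand, we have from Lemma \ref{lem3.4} the convergence of $V^h = \frac{h}{\sqrt{e^h}}\fint(\tilde y^h - \pi)\,\mathrm{d}t$ to $V$ in $W^{1,2}(S)$, with $\nabla_{tan} V = A_{tan}$. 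The strategy is to expand $(Q^h)^TR^h = \mathrm{Id} + \frac{\sqrt{e^h}}{h}A^h$ (Lemma \ref{lem3.2}(ii)), substitute, and collect the terms of order $\sqrt{e^h}$: the leading correction to $\nabla_{tan}\tilde y^h$ beyond $\nabla_{tan}\pi$ is, to order $\sqrt{e^h}$, the sum of a term from $A^h$ (the averaged/zeroth-order part) and a term from $G^h$ multiplied by the geometric factors. To separate the $t$-dependence, I would first take the $t$-average to identify $G_0(x) = \fint G(x+t\vec n)\,\mathrm{d}t$ in terms of $A$, $\Pi$ and $B_{tan}$-type data, and then subtract, isolating the term linear in $t$. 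The matrix field $\nabla(A\vec n) - A\Pi$ arises precisely as the $t/h_0$-coefficient when one differentiates the constraint $\partial_\tau V = A\tau$ along the normal direction: differentiating $A\vec n$ tangentially gives $\nabla(A\vec n)$, while the change in the tangent frame contributes $-A\Pi$ (since $\partial_\tau \vec n = \Pi\tau$). I would make this rigorous by testing the weak limit against smooth matrix fields supported on slabs of the form $x\in S$, $t\in(a,b)$, using that $(Q^h)^T R^h\to\mathrm{Id}$ strongly in every $L^p$ and $A^h\to A$ strongly in every $L^p$, so products of these with the weakly convergent $G^h$ pass to the limit.

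The main obstacle I anticipate is the bookkeeping in the expansion: one must carefully disentangle three small-parameter contributions — the $O(\sqrt{e^h}/h)$ coming from $(Q^h)^TR^h - \mathrm{Id}$, the $O(\sqrt{e^h})$ coming from $\sqrt{e^h}G^h$, and the $O(h)$ geometric discrepancy between $(\mathrm{Id}+th/h_0\,\Pi)$ and $(\mathrm{Id}+t\Pi)$ — and show that after dividing by $\sqrt{e^h}$ only the $G^h$-term and the $t$-linear geometric term survive in the tangential minor, the others either vanishing in the limit or being absorbed into $G_0$. The clean way to handle the affine dependence is to note that $\nabla_h y^h$ is the gradient of the $W^{1,2}$ map $u^h$ pulled back, so $\partial_{\vec n}(\nabla_{tan} \tilde y^h)$ is controlled; passing to the weak limit shows $\partial_{\vec n} G_{tan}$ is constant in $t$ (equal to $\frac{1}{h_0}(\nabla(A\vec n) - A\Pi)_{tan}$), which is exactly the claimed affine structure. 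Care is also needed because $G$ is only an $L^2$ weak limit, so all manipulations of its $t$-derivative must be done in the distributional sense against test functions, not pointwise.
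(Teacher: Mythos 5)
Your overall strategy matches the paper's: boundedness of $G^h$ in $L^2(S^{h_0})$ via Proposition \ref{help}(i) gives a weakly convergent subsequence, and the affine-in-$t$ structure of $G_{tan}$ comes from commuting tangential and normal derivatives of $\tilde y^h$ --- the scaled normal derivative converges strongly to $(A\vec n)\pi$ by Proposition \ref{help}(ii), its tangential gradient is expressible through the normal increment of $G^h$ plus a term producing $A\Pi$, and equating the two limits yields the slope $\frac{1}{h_0}\big(\nabla(A\vec n)-A\Pi\big)$. The paper implements your ``distributional $t$-derivative'' rigorously via the difference quotients $f^{s,h}$ built from $h_0\tilde y^h - h\,\mathrm{id}$ at levels $t$ and $t+s$, for each fixed $s>0$: these converge strongly in $L^2$ and weakly in $W^{1,2}(S^{h_0})$, and the resulting identity for the $s$-increment of $G$, valid for every $s$ and $t$, is precisely the claimed affinity. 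So the second half of your plan, though schematic, is sound; note only that identifying $G_0$ in terms of $B_{tan}$ and $A^2$ is not needed here (that is the content of Lemma \ref{lem3.9}), only the slope matters for this lemma.

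The step that fails as written is the boundedness argument. In the paper's notation $R^h\pi$ is the composition $R^h\circ\pi$, i.e.\ the $SO(3)$-valued field $z\mapsto R^h(\pi(z))$; it has nothing to do with $\nabla\pi$. Hence $(R^h\pi)^T(R^h\pi)=\mathrm{Id}$ pointwise and
$$G^h=\frac{1}{\sqrt{e^h}}\Big((R^h\pi)^T\nabla_h y^h-\mathrm{Id}\Big)
=\frac{1}{\sqrt{e^h}}(R^h\pi)^T\Big(\nabla_h y^h-R^h\pi\Big),$$
so $\|G^h\|_{L^2(S^{h_0})}\le C$ follows in one line from Proposition \ref{help}(i), since multiplication by a rotation preserves the pointwise norm. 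Your splitting $\mathrm{Id}=(R^h)^TR^h\nabla\pi+\big(\mathrm{Id}-(R^h)^TR^h\nabla\pi\big)$ would leave the residual $\frac{1}{\sqrt{e^h}}(\mathrm{Id}-\nabla\pi)$, which is $O(1)$ pointwise and therefore $O(1/\sqrt{e^h})$ after the division --- not bounded; moreover $\nabla\pi$ is singular ($\partial_{\vec n}\pi=0$), so $\|\nabla_h y^h-R^h\nabla\pi\|_{L^2}$ is certainly not $O(\sqrt{e^h})$. The conclusion you want is true, but only under the correct reading of $R^h\pi$; as literally written, the estimate does not close.
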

\begin{proof}
{\bf 1.} The sequence $\{G^h\}$ is bounded in $L^2(S^{h_0})$ by Proposition \ref{help} (i).  
Therefore it has a subsequence (which we do not relabel)
converging weakly to some $G$.

For a fixed $s>0$, consider now the sequence of vector fields
$f^{s,h}\in W^{1,2}(S^{h_0}, \mathbb{R}^3)$:
$$f^{s,h}(x+t\vec n) = \frac{1}{s\sqrt{e^h}} \Big[\Big(h_0\tilde y^h(x+(t+s)\vec n) - h(x+(t+s)\vec n)\Big)
- \Big(h_0\tilde y^h(x+t\vec n) - h(x+t\vec n)\Big)\Big]$$
We claim that $f^{s,h}$ converges in $L^2(S^{h_0})$ (up to a subsequence) to $(A\vec n)\pi$ 
as $h\to 0$. Indeed, using Proposition \ref{formule} one has:
\begin{equation*}
\begin{split}
f^{s,h}(x+t\vec n) &= \frac{1}{\sqrt{e^h}} 
\fint_t^{t+s} \Big(h_0\partial_{\vec n} \tilde y^h(x+\sigma \vec n) - h\vec n\Big)
~\mbox{d}\sigma\\ &= \frac{h}{\sqrt{e^h}} \fint_{t}^{t+s} 
\Big((Q^h)^T\nabla_h y^h (x+\sigma\vec n) - \mathrm{Id}\Big)\vec n~\mbox{d}\sigma,
\end{split}
\end{equation*}
and the convergence follows by Proposition \ref{help} (ii).

\medskip

{\bf 2.}
We claim that this convergence is actually weak in $W^{1,2}(S^{h_0})$.
First, notice that the normal derivatives converge to $0$ in $L^2(S^{h_0})$ 
by Proposition \ref{help} (ii):
\begin{equation*}
\partial_{\vec n}f^{s,h}(x+t\vec n) = \frac{h}{s\sqrt{e^h}} ~
(Q^h)^T \Big(\nabla_h y^h(x+(t+s)\vec n) - \nabla_h y^h(x+t\vec n)\Big)\vec n(x).
\end{equation*}
We now find the weak limit of the tangential gradients of $f^{s,h}$.
By Proposition \ref{formule} there holds, for all $\tau\in T_x S$:
\begin{equation*}
\begin{split}
&\partial_{\tau}f^{s,h}(x+t\vec n) = \frac{1}{s\sqrt{e^h}} ~
\Big(h_0\nabla \tilde y^h(x+(t+s)\vec n)(\mathrm{Id} + (t+s)\Pi)(\mathrm{Id} + t\Pi)^{-1} \\
& \qquad\qquad\qquad\qquad\qquad\qquad\qquad\qquad\qquad
- h_0\nabla \tilde y^h(x+t\vec n) - hs\Pi (\mathrm{Id} + t\Pi)^{-1}\Big)\tau\\
&=\frac{h_0}{s\sqrt{e^h}} ~
(Q^h)^T \Big(\nabla_h y^h(x+(t+s)\vec n) - \nabla_h y^h(x+t\vec n)\Big)
(\mathrm{Id} + th/h_0\Pi)(\mathrm{Id} + t\Pi)^{-1}\tau \\
&\quad + \frac{h}{s\sqrt{e^h}}~\Big((Q^h)^T\nabla_h y^h(x+(t+s)\vec n) - \mathrm{Id}\Big)
s \Pi(\mathrm{Id} + t\Pi)^{-1}\tau.
\end{split}
\end{equation*}
By Proposition \ref{help} (ii), the second term in the right hand side above:
$$\frac{h}{\sqrt{e^h}}~\Big((Q^h)^T\nabla_h y^h(x+(t+s)\vec n) - \mathrm{Id}\Big)
\Pi(\mathrm{Id} + t\Pi)^{-1}$$
converges in $L^2(S^{h_0})$ to $A\Pi (\mathrm{Id} + t\Pi)^{-1}$.

On the other hand, the first term equals to:
$$\frac{h_0}{s} (Q^h)^T R^h \Big(G^h(x+(t+s)\vec n) - G^h(x+t\vec n)\Big)
(\mathrm{Id} + th/h_0\Pi)(\mathrm{Id} + t\Pi)^{-1}$$
and by Lemma \ref{lem3.2} (i) it converges weakly in $L^2(S^{h_0})$ to
$$\frac{h_0}{s} \Big(G(x+(t+s)\vec n) - G(x+t\vec n)\Big)
(\mathrm{Id} + t\Pi)^{-1}.$$
This establishes the (weak) convergence of $f^{s,h}$ in $W^{1,2}(S^{h_0})$.

\medskip

{\bf 3.} Equating the weak limits of tangential derivatives, we obtain, 
for every $\tau\in T_x S$:
\begin{equation*}
\begin{split}
\partial_\tau (A\vec n) (x) \cdot (\mathrm{Id} + t\Pi)^{-1} &=
\frac{h_0}{s}\Big(G(x+(t+s)\vec n) - G(x+t\vec n)\Big)
(\mathrm{Id} + t\Pi)^{-1}\tau \\
& \quad + A\Pi (\mathrm{Id} + t\Pi)^{-1}\tau.
\end{split}
\end{equation*}
This proves the lemma.
\end{proof}

Finally, we have the following bound for convergence of the scaled energies $I^h$:

\begin{lemma}\label{liminf}
Assume {\bf (H)}. Then:
$$\liminf_{h\to 0} \frac{1}{e^h} I^h(y^h) \geq \frac{1}{2} \int_S \mathcal{Q}_2\left(x,(\mathrm{sym }~ G_0)_{tan}\right)
+ \frac{1}{24} \int_S \mathcal{Q}_2\left(x,(\nabla(A\vec n) - A\Pi)_{tan}\right).$$
\end{lemma}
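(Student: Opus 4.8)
The plan is to obtain the lower bound by a Taylor expansion of $W$ at $\mathrm{Id}$, using the frame-indifference to replace $\nabla_h y^h$ by $(R^h)^T\nabla_h y^h = \mathrm{Id} + \sqrt{e^h}\,G^h$, together with the weak $L^2$ convergence of $G^h$ to $G$ established in Lemma \ref{lem3.6} and the explicit affine-in-$\vec n$ structure of $G_{tan}$. Concretely, by Proposition \ref{formule} and the $SO(3)$-invariance of $W$,
\begin{equation*}
I^h(y^h) = \int_S\fint_{-h_0/2}^{h_0/2} W\big(\mathrm{Id}+\sqrt{e^h}\,G^h(x+t\vec n)\big)\cdot\det\big(\mathrm{Id}+t(h/h_0)\Pi(x)\big)~\mathrm{d}t~\mathrm{d}x .
\end{equation*}
First I would introduce the characteristic function $\chi^h$ of the set where $|\sqrt{e^h}\,G^h|$ is small enough to stay in the neighbourhood $\mathcal{O}$ where $W$ is $\mathcal{C}^2$; since $G^h$ is bounded in $L^2$ and $\sqrt{e^h}\to 0$, $\chi^h\to 1$ boundedly in measure, and on the complement one simply discards the nonnegative contribution. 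On the good set one writes the second-order Taylor expansion $W(\mathrm{Id}+\sqrt{e^h}G^h) = \frac{e^h}{2}\mathcal{Q}_3(G^h) + o(e^h)|G^h|^2$ with a remainder that is uniformly small because $D^2W$ is continuous at $\mathrm{Id}$; dividing by $e^h$ and letting the determinant factor tend to $1$ uniformly, one is reduced to
\begin{equation*}
\liminf_{h\to 0}\frac{1}{e^h}I^h(y^h) \geq \liminf_{h\to 0}\frac12\int_S\fint_{-h_0/2}^{h_0/2}\chi^h\,\mathcal{Q}_3\big(G^h(x+t\vec n)\big)~\mathrm{d}t~\mathrm{d}x .
\end{equation*}

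Next I would exploit that $\mathcal{Q}_3$ is a nonnegative quadratic form, hence weakly lower semicontinuous on $L^2(S^{h_0})$; combined with $\chi^h G^h\rightharpoonup G$ weakly in $L^2$ (the truncation does not change the weak limit), this yields $\liminf \geq \frac12\int_{S^{h_0}}\mathcal{Q}_3(G)$ after converting back to the $\int_S\fint\,\mathrm{d}t\,\mathrm{d}x$ form. Then I would pointwise-in-$x$ minimize: since $\mathcal{Q}_3(G)\geq \mathcal{Q}_2(x,G_{tan})$ for every matrix $G$ by definition \eqref{Qmatrices}, and $\mathcal{Q}_2(x,\cdot)$ depends only on the symmetric part, we get
\begin{equation*}
\frac12\int_S\fint_{-h_0/2}^{h_0/2}\mathcal{Q}_2\big(x,(\mathrm{sym}\,G)_{tan}(x+t\vec n)\big)~\mathrm{d}t~\mathrm{d}x .
\end{equation*}
Now insert the structure from Lemma \ref{lem3.6}: $(\mathrm{sym}\,G)_{tan}(x+t\vec n) = (\mathrm{sym}\,G_0)_{tan}(x) + \tfrac{t}{h_0}\big(\mathrm{sym}(\nabla(A\vec n)-A\Pi)\big)_{tan}(x)$. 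Since $\int_{-h_0/2}^{h_0/2} t\,\mathrm{d}t = 0$ and $\fint_{-h_0/2}^{h_0/2}(t/h_0)^2\,\mathrm{d}t = 1/12$, the cross term drops out and the $t$-average of the quadratic form splits into the constant part plus $\frac{1}{12}$ times the linear part, giving exactly $\frac12\int_S\mathcal{Q}_2(x,(\mathrm{sym}\,G_0)_{tan}) + \frac{1}{24}\int_S\mathcal{Q}_2(x,(\nabla(A\vec n)-A\Pi)_{tan})$, where I also used that $\mathcal{Q}_2(x,\cdot)$ ignores skew parts so $(\mathrm{sym}\,G_0)_{tan}$ and $(G_0)_{tan}$ are interchangeable in the notation of the statement.

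The main obstacle is the handling of the remainder in the Taylor expansion on the set where $G^h$ is not small: one must argue that the truncated sequence $\chi^h\mathcal{Q}_3(G^h)$ still has the right weak lower semicontinuity behaviour, i.e. that truncation loses only a nonnegative amount and does not affect the weak $L^2$ limit $G$. This is the standard device from \cite{FJMgeo, FJMhier}, and the key quantitative input is precisely Proposition \ref{help}(i), which bounds $\|G^h\|_{L^2(S^{h_0})}$ uniformly; together with $\sqrt{e^h}\to 0$ this forces $|\{|\sqrt{e^h}G^h|>\delta\}|\to 0$ for every fixed $\delta$, so that $\|(1-\chi^h)G^h\|_{L^2}\to 0$ and the frame-indifference reduction together with the remainder estimate $|W(\mathrm{Id}+F)-\frac12\mathcal{Q}_3(F)|\leq\omega(|F|)|F|^2$ with $\omega(s)\to 0$ go through. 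A secondary technical point is the uniform convergence of the Jacobian weight $\det(\mathrm{Id}+t(h/h_0)\Pi)\to 1$ as $h\to 0$, which is immediate from the smoothness of $S$ and the bound $|\mathrm{Id}+t\Pi|<3/2$ assumed at the outset.
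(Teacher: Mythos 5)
Your proposal is correct and follows essentially the same route as the paper's proof: frame indifference to reduce to $W(\mathrm{Id}+\sqrt{e^h}\,G^h)$, truncation to a good set whose complement has vanishing measure, second-order Taylor expansion with a uniform remainder, weak lower semicontinuity of the convex form $\mathcal{Q}_3$ applied to $\chi^h G^h\rightharpoonup G$, pointwise minimization to pass to $\mathcal{Q}_2$, and the affine-in-$t$ structure of $G_{tan}$ from Lemma \ref{lem3.6} so that the cross term integrates to zero and the quadratic term contributes the factor $1/12$. The only minor imprecision is the parenthetical claim that $\|(1-\chi^h)G^h\|_{L^2}\to 0$, which does not follow from an $L^2$ bound alone without equi-integrability; but all the argument actually needs is $\chi^h G^h\rightharpoonup G$ weakly in $L^2$, which holds because $|\{\chi^h=0\}|\to 0$ while $\{G^h\}$ stays bounded in $L^2$.
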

\begin{proof}
By the frame invariance property of $W$ we have:
\begin{equation*}
W(\nabla_h y^h) = W((R^h)^T \nabla_h y^h) = W(\mbox{Id} + \sqrt{e^h} G^h).
\end{equation*}
Consider the sets $\Omega_h=\{x\in S^{h_0}; ~~ (e^h)^{1/4} |G^h(x)|\leq 1\}$.
Clearly the sequence of characteristic functions $\chi_{\Omega_h}$ converges to
$1$ in $L^1(S^{h_0})$, as $\{(e^h)^{1/4} G^h\}$ converges pointwise to $0$.
Since $W$ is $\mathcal{C}^2$ in a neighborhood of $\mbox{Id}$, then by the
above calculation, in $\Omega_h$ (for $h$ sufficiently small) there holds:
\begin{equation}\label{modulus_continuity}
\begin{split}
\frac{1}{e^h}W(\nabla_h y^h) &= \frac{1}{e^h}\frac{1}{2} D^2 W(\mbox{Id})
(\sqrt{e^h} G^h, \sqrt{e^h} G^h) \\
&~~~~~~
+ \int_0^1 (1-s) \left[D^2W (\mbox{Id} + s\sqrt{e^h} G^h) - D^2W(\mbox{Id})
\right]~\mbox{d}s ~(G^h, G^h)\\
&= \frac{1}{2} \mathcal{Q}_3( G^h) + {o}(1) |G^h|^2.
\end{split}
\end{equation}
Above ${o}(1)$ is the Landau symbol denoting any quantity uniformly
converging to $0$, as $h\to 0$.
In view of Proposition \ref{formule} we now obtain:
\begin{equation*}
\begin{split}
\liminf_{h\to 0} \frac{1}{e^h}& I^h(y^h) \geq \liminf_{h\to 0} \frac{1}{e^h}\int_S\fint_{-h_0/2}^{h_0/2}
\chi_{\Omega_h} W(\nabla_h y^h) \mbox{det } [\mbox{Id} + th/h_0\Pi]~\mbox{d}t~\mbox{d}x\\
&= \liminf_{h\to 0}\int_S\fint_{-h_0/2}^{h_0/2}\chi_{\Omega_h}\frac{1}{e^h} W(\nabla_h y^h)~\mbox{d}t~\mbox{d}x\\
&= \liminf_{h\to 0}\frac{1}{2}\int_S\fint_{-h_0/2}^{h_0/2} \mathcal{Q}_3\left(\mathrm{sym }~ (\chi_{\Omega_h}G^h)\right)
+ {o}(1) \int_{S^{h_0}} |G^{h}|^2\\
&\geq \frac{1}{2}\int_S\fint_{-h_0/2}^{h_0/2}\mathcal{Q}_3(\mathrm{sym }~ G).
\end{split}
\end{equation*}
The last inequality follows from positive definiteness of $\mathcal{Q}_3$ on symmetric matrices,
and the fact that $\chi_{\Omega_h}G^h$ converges weakly to $G$, in $L^2(S^{h_0})$.

By the definition of $\mathcal{Q}_2$ and by Lemma \ref{lem3.6} we get:
\begin{equation*}
\begin{split}
&\frac{1}{2}\int_S\fint_{-h_0/2}^{h_0/2}\mathcal{Q}_3(\mathrm{sym }~ G)
= \frac{1}{2}\int_S\fint_{-h_0/2}^{h_0/2}\mathcal{Q}_2\left(x,(\mathrm{sym }~ G)_{tan}\right)\\
&\qquad
= \frac{1}{2} \Bigg[\int_S \fint_{-h_0/2}^{h_0/2} \mathcal{Q}_2\left(x,(\mathrm{sym }~ G_0)_{tan}\right)
+ \int_S \fint_{-h_0/2}^{h_0/2}\frac{t^2}{h_0^2}\mathcal{Q}_2\left(x,(\nabla(A\vec n) 
- A\Pi)_{tan}\right)\Bigg],
\end{split}
\end{equation*}
which proves the result.
\end{proof}

\section{A proof of Theorem \ref{thmainuno} and some explanations}
\label{sec_proofthuno}

To complete the proof of Theorem \ref{thmainuno}, in view of Lemma \ref{lem3.4}
and Lemma \ref{liminf}, it remains to
understand the
structure of the admissible matrices $G_0$.  This is the content of the
next lemma.

In addition to the hypothesis {\bf (H)}, we now also assume
the existence of the finite limit:
\begin{equation}\label{kappa}
\kappa = \lim_{h\to 0} \sqrt{e^h}/h^2 < \infty.
\end{equation}
When $e^h\approx h^\beta$, this corresponds to the case $\beta\geq 4$,
with $\kappa > 0$ for $\beta=4$ and $\kappa = 0$ for $\beta > 4$.

\begin{lemma}\label{lem3.9}
Assume {\bf (H)} and (\ref{kappa}).
Let $G_0$ be the matrix field on $S$, as in Lemma \ref{lem3.6}. 
Then we have the following convergence, up to a subsequence, weakly in $L^2(S)$:
\begin{equation}\label{symG0tan}
\lim_{h\to 0} \frac{1}{h} \mathrm{sym }~\nabla V^h 
= \left(\mathrm{sym }~G_0 + \frac{\kappa}{2} A^2\right)_{tan},
\end{equation}
where the subscript $_{tan}$ denotes, as usual, the tangential minor of 
a given matrix field on $S$.
\end{lemma}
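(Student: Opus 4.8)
The plan is to expand $\nabla V^h$ one order further than in Lemma~\ref{lem3.4} and to identify the second-order term with the tangential minor of $\mathrm{sym}~G_0$ plus the curvature correction $\frac{\kappa}{2}(A^2)_{tan}$. I start from the exact identity~(\ref{nabla_Vh}), which decomposes $\nabla V^h$ into a term controlled by~(\ref{important}) and the term $\frac{h}{\sqrt{e^h}}\big((Q^h)^TR^h(x)-\mathrm{Id}\big)_{tan}$. The point is that the first term in~(\ref{nabla_Vh}) is not merely $o(1)$ in $L^2$, but is in fact of order $\sqrt{e^h}/h$: using Proposition~\ref{help}(i) one has $\nabla_{tan}\tilde y^h - \big((Q^h)^TR^h\big)_{tan}(\mathrm{Id}+t\Pi)^{-1} = \sqrt{e^h}\,(Q^h)^TR^h G^h_{tan}$ up to the geometric factor $(\mathrm{Id}+th/h_0\Pi)$, so after multiplying by $h/\sqrt{e^h}$ and averaging in $t$, this contributes $h\cdot\fint (Q^h)^TR^h G^h_{tan}(\mathrm{Id}+t\Pi)^{-1}(\mathrm{Id}+t\Pi)\,\mathrm dt$ up to lower-order terms, i.e.\ $h\cdot\fint (Q^h)^TR^h (G^h)_{tan}\,\mathrm dt$ plus an error that is $o(h)$ in $L^2$ (the factor $(\mathrm{Id}+th/h_0\Pi)-\mathrm{Id}$ being $O(h)$, and $G^h$ bounded in $L^2$). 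By Lemma~\ref{lem3.2}(i), $(Q^h)^TR^h\to\mathrm{Id}$ strongly in every $L^p$, and $\fint_{-h_0/2}^{h_0/2} G^h\,\mathrm dt \rightharpoonup G_0$ weakly in $L^2(S)$ by Lemma~\ref{lem3.6}; hence $\frac1h$ times this first term converges weakly in $L^2(S)$ to $(G_0)_{tan}$.

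Next I handle the second term of~(\ref{nabla_Vh}), namely $\frac{1}{h}\cdot\frac{h}{\sqrt{e^h}}\big((Q^h)^TR^h-\mathrm{Id}\big)_{tan} = \frac{1}{\sqrt{e^h}}\big((Q^h)^TR^h-\mathrm{Id}\big)_{tan}$. Here I take symmetric parts: $\mathrm{sym}\big((Q^h)^TR^h-\mathrm{Id}\big) = \frac{e^h}{h^2}\cdot\frac{h^2}{e^h}\mathrm{sym}\big((Q^h)^TR^h-\mathrm{Id}\big)$, and by Lemma~\ref{lem3.2}(iii) the factor $\frac{h^2}{e^h}\mathrm{sym}\big((Q^h)^TR^h-\mathrm{Id}\big)$ converges in $L^p$ to $\frac12 A^2$. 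Using~(\ref{kappa}), $\frac{1}{\sqrt{e^h}}\cdot\frac{e^h}{h^2} = \frac{\sqrt{e^h}}{h^2}\to\kappa$, so $\frac{1}{\sqrt{e^h}}\,\mathrm{sym}\big((Q^h)^TR^h-\mathrm{Id}\big)\to \frac{\kappa}{2}A^2$ in $L^p(S)$, and therefore its tangential minor converges to $\frac{\kappa}{2}(A^2)_{tan}$. Taking symmetric parts of the whole identity~(\ref{nabla_Vh}) — note $\mathrm{sym}\,\nabla V^h$ involves the tangential minors since $V^h$ and its derivatives are compared on $T_xS$ — and combining the two limits yields $\frac1h\,\mathrm{sym}~\nabla V^h \rightharpoonup (\mathrm{sym}~G_0)_{tan} + \frac{\kappa}{2}(A^2)_{tan}$ weakly in $L^2(S)$, which is~(\ref{symG0tan}).

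The main obstacle is the bookkeeping in the first step: one must verify carefully that replacing $\nabla_{tan}\tilde y^h$ by $\big((Q^h)^TR^h\big)_{tan}(\mathrm{Id}+t\Pi)^{-1}$ and the geometric matrix $(\mathrm{Id}+th/h_0\Pi)(\mathrm{Id}+t\Pi)^{-1}$ by $(\mathrm{Id}+t\Pi)^{-1}$ each introduce only $o(h)$ errors in $L^2(S^{h_0})$ after the relevant rescaling by $h/\sqrt{e^h}$, so that no spurious contribution survives in the limit. The estimate~(\ref{important}) gives the first replacement an error of size $\sqrt{e^h}$, which after multiplication by $h/\sqrt{e^h}$ and division by $h$ is $O(1)\cdot$nothing — one actually needs the sharper statement that $\nabla_{tan}\tilde y^h - \big((Q^h)^TR^h\big)_{tan}(\mathrm{Id}+t\Pi)^{-1}(\mathrm{Id}+th/h_0\Pi) = \sqrt{e^h}(Q^h)^TR^h G^h(\mathrm{Id}+th/h_0\Pi)(\mathrm{Id}+t\Pi)^{-1}$, an exact identity from the definitions of $G^h$ and Proposition~\ref{formule}, so that the error is genuinely controlled by $\|G^h\|_{L^2}$ and the weak limit is extracted cleanly. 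The curvature term comes essentially for free once Lemma~\ref{lem3.2}(iii) and~(\ref{kappa}) are invoked, so the whole argument is a careful rescaled passage to the limit in the identity~(\ref{nabla_Vh}).
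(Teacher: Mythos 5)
Your proposal is correct and follows essentially the same route as the paper: both pass to the limit in the identity (\ref{nabla_Vh}), identifying the weak $L^2(S)$ limit of the first term as $(G_0)_{tan}$ via the factorization $\nabla_h y^h=R^h(\mathrm{Id}+\sqrt{e^h}G^h)$ together with Lemma \ref{lem3.2}(i) and Lemma \ref{lem3.6}, and the limit of the second term as $\frac{\kappa}{2}(A^2)_{tan}$ via Lemma \ref{lem3.2}(iii) and (\ref{kappa}). One caveat on your bookkeeping: the error from replacing $(\mathrm{Id}+th/h_0\Pi)(\mathrm{Id}+t\Pi)^{-1}$ by $(\mathrm{Id}+t\Pi)^{-1}$ in front of the \emph{leading} factor $(Q^h)^TR^h$ is pointwise of size $h^2/\sqrt{e^h}$, which after division by $h$ is $h/\sqrt{e^h}\to\infty$ rather than $o(1)$; this contribution vanishes only because $\fint_{-h_0/2}^{h_0/2}t\,\mathrm{d}t=0$ (the paper's equivalent device is to subtract $R^h\pi$ inside the $t$-weighted integral and then invoke Proposition \ref{help}(i)), so that step should be made explicit.
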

\begin{proof}
We use the formula (\ref{nabla_Vh}) to calculate $\frac{1}{h}\mathrm{sym }~ 
\nabla V^h$.
The last term in the right hand side gives:
$$\frac{1}{\sqrt{e^h}} \mbox{sym} \left((Q^h)^T R^h - \mbox{Id}\right)_{tan}
= \frac{\sqrt{e^h}}{h^2} \frac{h^2}{e^h} \mbox{sym}\left((Q^h)^T R^h - \mbox{Id}\right)_{tan},$$
which converges in $L^2(S)$ to $\kappa/2 (A^2)_{tan}$ by Lemma \ref{lem3.2} (iii).

To treat the first term in the right hand side of (\ref{nabla_Vh}), notice that
for every $\tau\in T_xS$:
\begin{equation*}
\begin{split}
&\frac{1}{\sqrt{e^h}} \Bigg[\fint_{-h_0/2}^{h_0/2} \nabla\tilde y^h(x+t\vec n) (\mbox{Id} + t\Pi)
- (Q^h)^T R^h(x) ~\mbox{d}t\Bigg]\tau\\
&\quad = \frac{1}{\sqrt{e^h}}(Q^h)^T \Bigg[\fint_{-h_0/2}^{h_0/2}\nabla_h y^h(x+t\vec n) - R^h(x) 
~\mbox{d}t + \fint_{-h_0/2}^{h_0/2} t h/h_0 \nabla_h y^h \Pi~\mbox{d}t\Bigg] \tau\\
&\quad= \frac{1}{\sqrt{e^h}}(Q^h)^T R^h(x)
\Bigg[\fint_{-h_0/2}^{h_0/2}(R^h)^T\nabla_h y^h - \mbox{Id}~\mbox{d}t\Bigg]\tau \\
&\qquad\qquad\qquad\qquad
+ \frac{h/h_0}{\sqrt{e^h}} (Q^h)^T\Bigg[\fint_{-h_0/2}^{h_0/2} t \left(\nabla_h y^h- R^h\pi\right) 
~\mbox{d}t \Bigg]\Pi(x)\tau, 
\end{split}
\end{equation*}
where we used Proposition \ref{formule}.
Now, the second term in the right hand side above converges in $L^2(S)$ to $0$, 
by Proposition \ref{help} (i).
Further, the matrix in the first term equals to:
$$(Q^h)^T R^h(x) \fint_{-h_0/2}^{h_0/2}G^h(x+t\vec n)~\mbox{d}t,$$
and by Lemma \ref{lem3.2} (i) and Lemma \ref{lem3.6}, it converges weakly 
in $L^2(S)$ to $G_0$.
This completes the proof.
\end{proof}

We now comment on the regularity and role of various quantities containing 
$V$ and $A$, intrinsically related to the geometry of the problem.

\begin{remark}\label{rem3.6} 
Notice first that if a vector field $V\in W^{2,2}(S,\mathbb{R}^3)$ 
has skew-symmetric gradient:
\begin{equation*}
\tau\cdot \partial_\tau V(x) = 0 \qquad  \forall\tau\in T_x S,
\end{equation*} 
then it uniquely determines a $W^{1,2}$ matrix field $A:S\longrightarrow so(3)$ by:
\begin{equation}\label{diffic}
\begin{split}
\forall \tau\in T_xS \qquad A\tau &= \partial_\tau V,\\
A \vec n &= \Pi\cdot V_{tan} - \nabla_{tan} (V\vec n). 
\end{split}
\end{equation} 

Regarding the regularity, write $V$ as the sum of its tangential 
and normal components, to obtain:
$$V = V_{tan} + (V\vec n)\vec n,\qquad
\mbox{sym }\nabla V = \mbox{sym }\nabla V_{tan} + (V\vec n)\Pi.$$
Hence, assuming sufficient regularity of $S$ (say, $S$ is $\mathcal{C}^{3,1}$
up to its boundary) it follows that
$\mbox{sym }\nabla V_{tan} = -(V\vec n) \Pi \in W^{2,2}(S,\mathbb{R}^{2\times 2}).$
Using the same calculations as in \cite{ciarbookvol3} page 119, we may deduce
that the tangential component $V_{tan}$ enjoys higher regularity 
than the vector field $V$.  
Namely, $V_{tan}\in W^{3,2}(S,\mathbb{R}^3)$ and:
$$\|V_{tan}\|_{W^{3,2}(S)} \leq C\Big(\|V_{tan}\|_{W^{1,2}(S)}
+ \|V\vec n\|_{W^{2,2}(S)}\Big).$$
By Korn's inequality, one can replace the $W^{1,2}$ norm of $V_{tan}$ by
a term of the order $\|V_{tan}\|_{L^2} + \|\mbox{sym }\nabla V_{tan}\|_{L^2}$,
so that we finally obtain:
$$\|V_{tan}\|_{W^{3,2}(S)} \leq C\Big(\|V_{tan}\|_{L^{2}(S)}
+ \|V\vec n\|_{W^{2,2}(S)}\Big).$$
For an elementary derivation of Korn's inequality on $S$ from Korn's inequality 
on open sets, see e.g. \cite{LewMul}.

In the same manner, one can prove the following useful bound, valid under  
$\mathcal{C}^{2,1}$ regularity of $S$:
\begin{equation}\label{splitting}
\|V_{tan}\|_{W^{2,2}(S)} \leq C\Big(\|V_{tan}\|_{L^{2}(S)}
+ \|V\vec n\|_{W^{1,2}(S)}\Big).
\end{equation}

\end{remark}

\begin{remark}\label{rem4.3}
{\bf 1.}
The (scaled) $t$ - derivative of $G\tau$, which is also the argument of the
second term in the definition of $I$ (and $\tilde I$),  
may be written as:
$$\big(\nabla (A\vec n) - A\Pi \big)\tau 
= \Big[\big(\nabla (A\vec n) - A\Pi \big)\tau\Big]_{tan} 
= (\partial_\tau A)\vec n.$$
This expression measures the difference of order $h$ between the shape operator $\Pi$
on $S$ and the shape operator $\Pi^h$ of the deformed surface 
$S_h = (\mbox{id} + hV)(S)$ (see Figure \ref{figura1}).  
To see this, let $x\in S$ and let $\tau_1, \tau_2\in T_xS$ be such that $\vec n(x) =\tau_1\times \tau_2.$
The tangent map of the deformation $\phi^h(x) = x+hV(x)$ equals 
$\mbox{Id} + hA$, and we obtain the following expansion of the 
(scaled) normal vector $\vec n^h$ to $S_h$ at the point
$\phi^h(x)$:
\begin{equation*}
\begin{split}
\vec n^h &= \Big(\partial_{\tau_1}\phi^h \times \partial_{\tau_2}\phi^h\Big)(x) =
\vec n(x) + h(\tau_1\times \partial_{\tau_2}V + \partial_{\tau_1}V \times \tau_2)
+\mathcal{O}(h^2)\\
&= \vec n + hA\vec n + \mathcal{O}(h^2),
\end{split}
\end{equation*}
where we used the Jacobi identity for vector product and the fact that
$A\in so(3)$. Note that $|\vec n^h| = 1 + \mathcal{O}(h^2)$ and therefore
\begin{equation*}
\Pi^h (\mbox{Id} + hA) \tau = \partial_\tau \left (\frac{\vec n^h}{|\vec n^h|} \right )
= \partial_\tau \vec n^h + \mathcal{O}(h^2).  
\end{equation*}   
Hence the amount of bending of $S$, in the direction
of $\tau\in T_x S$, can be estimated by:
\begin{equation*}
\begin{split}
(\mbox{Id} + hA)^{-1} \Pi^h (\mbox{Id}& + hA)\tau - \Pi\tau
= (\mbox{Id} + hA)^{-1} (\partial_\tau \vec n^h + \mathcal{O}(h^2) )- \Pi\tau \\
& = (\mbox{Id} + hA)^{-1} \Big((\mbox{Id} + hA)\Pi\tau + h(\partial_\tau A)\vec n
+ \mathcal{O}(h^2)\Big) - \Pi\tau\\
& = (\mbox{Id} - hA) h (\partial_\tau A) \vec n + \mathcal{O}(h^2)
= h(\partial_\tau A)\vec n + \mathcal{O}(h^2).
\end{split}
\end{equation*}
\begin{figure}[h] 
\centerline{\psfig{figure=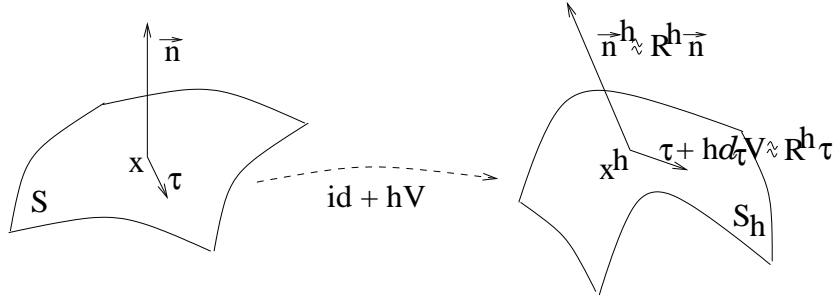,width=11cm,angle=0}}
\caption{The mid-surface $S$ and its deformation.}
\label{figura1}
\end{figure}
A closely related heuristics is the following. By Proposition \ref{help}
(for simplicity, we assume here that $e^h=h^4$)
the tangent map $\nabla u^h(x)$, at $x\in S$, is approximately a rotation
$R^h(x)\in SO(3)$. Hence, $\vec n^h \approx R^h \vec n$.
Assuming that $\lim Q^h = \mbox{Id}$, we may think that
$R^h(x) \approx \mbox{Id} + hA(x)$.
The difference of the shape operators on $S_h$ and $S$ satisfies:
\begin{equation*}
\begin{split}
 (R^h)^T \nabla \vec n^h -\Pi &\approx (\mbox{Id} + hA^T) \Big(\Pi + 
h\nabla_{tan}(A\vec n)\Big) - \Pi \\
&\approx h\nabla_{tan} (A\vec n) 
+ h A^T\Pi = h\Big(\nabla(A\vec n) - A\Pi\Big)_{tan}.
\end{split}
\end{equation*}

\medskip

{\bf 2.}
In turn, the role of the first term in the definition of $I$: 
$$(\mbox{sym }G_0)_{tan} = \lim_{h\to 0}
\frac{1}{h} \mbox{sym }\nabla V^h - \frac{\kappa}{2} (A^2)_{tan},$$
is to measure the difference of order $h^2$ between the metric on $S$ and the metric of
the deformed mid-surface. Notice that under the deformation $id + hV$, as in Figure \ref{figura1}, 
there is no first order change in the length of curves on $S$ 
because the gradient field $\nabla V$ is skew-symmetric. In geometrical terms, 
vector fields $V$ with this property 
are known as infinitesimal isometries (see \cite{Spivak}, Chapter 12).   

Under the same condition (for simplicity we again assume that $e^h=h^4$),
the amount of stretching of $S$, in the direction $\tau\in T_xS$ and
induced by the deformation $\phi^h = \mbox{id}+hV+ h^2 w$ has indeed 
the following expansion:
\begin{equation*}
\begin{split}
\left|\partial_{\tau} \phi^h\right|^2 -|\tau|^2
&= h^2\left(2\tau\partial_\tau w + |\partial_\tau V|^2\right) + \mathcal{O}(h^3)\\
&= 2h^2  \Big(\tau^T (\mathrm{sym }\nabla w)\tau - \frac{1}{2}\tau^T A^2\tau
\Big) + \mathcal{O}(h^3).
\end{split}
\end{equation*} 
\end{remark}

\section{The space of finite strains $\mathcal{B}$ and three examples 
of approximately robust surfaces}\label{sec_robust}

The space of limits as in the left hand side
of (\ref{symG0tan}) plays an important role in defining the exact limiting energy 
functional on $S$. With this in mind, we introduce:
\begin{definition}
The space of finite strains is the following closed subspace of $L^2(S)$:
$$\mathcal{B} = \Big\{\lim_{h\to 0}\mathrm{sym }~\nabla w^h; 
~~ w^h\in W^{1,2}(S,\mathbb{R}^3)\Big\}$$
where limits are taken in $L^2(S)$.
\end{definition}
Clearly, by Mazur's theorem, $\mathcal{B}$ contains all weak $L^2(S)$ 
limits of symmetric gradients of $W^{1,2}$ vector fields on $S$.

\medskip

As we shall see in Theorem \ref{thmaintre}, the form of the limiting energy 
functional simplifies, for surfaces with large space $\mathcal{B}$.

\begin{definition}\label{spaceB}
We say that $S$ is approximately robust, if for every $V\in W^{2,2}(S,\mathbb{R}^3)$
satisfying (\ref{Vass1}) and (\ref{Vass2}), one has: $(A^2)_{tan}\in\mathcal{B}$.
\end{definition}

According to our terminology, $S$ would be called ``robust'' if every admissible 
$(A^2)_{tan}$ as above, equaled $\mbox{sym }\nabla w$ for some 
$w\in W^{1,2}(S,\mathbb{R}^3)$.  The notion of robust surfaces will arise
at lower scalings, that is when $\kappa=\infty$, 
which we do not consider in this paper.

\begin{remark}
An equivalent construction of $\mathcal{B}$ is the following. 
Define the linear space of finite strain displacements:
$$\mathcal{W} = W^{1,2}(S,\mathbb{R}^3)/ \{w\in W^{1,2}; 
~~ \mathrm{sym }~\nabla w = 0\}.$$
It can be normed by $\|[w]\|_{\mathcal{W}} = \|\mathrm{sym }~\nabla w\|_{L^2(S)}$.
Then $(\mathcal{B}, \|\cdot\|_{L^2(S)})$ is linearly isometric to the completion
$\overline{\mathcal{W}}$ of $(\mathcal{W}, \|\cdot\|_{\mathcal{W}})$, 
so that the elements of $\mathcal{B}$ can be seen as generalized symmetric 
gradients of elements of $\overline{\mathcal{W}}$.

Such construction is used in \cite{GSP} or in \cite{ciarbookvol3} in
the context of derivation of membrane theories from linear elasticity. Note
the different regularity of the kernels considered in \cite{GSP} and the
related explanations in \cite{ciarbookvol3}, page 262.
\end{remark}

\begin{remark}
In general, it is complicated to directly 
determine the exact form of $\mathcal{B}$ or $\overline{\mathcal{W}}$.
The crucial step in identifying $\overline{\mathcal{W}}$ is finding 
the optimal norm $\|\cdot \|_o$ for which a Korn-Poincar\'e type inequality 
\begin{equation}\label{KP}
\inf\Big\{\|u- w\|_o;~~ w\in W^{1,2}, 
~\mathrm{sym }~\nabla w = 0\Big\} \leq C \|\mathrm{sym } \nabla u\|_{L^2(S)}
\end{equation} 
holds with a uniform constant $C$, for all $u\in W^{1,2}(S, \mathbb R^3)$. 
Unlike in the case of tangent vector fields, this optimal norm is usually weaker 
than $L^2$. The reason is that the boundedness of the left hand side in:
$$\mbox{sym }\nabla w^h = \mbox{sym }\nabla w^h_{tan} + (w^h\vec n) \Pi$$
does not, in general, imply $L^2$ boundedness of both terms in the right hand side.

This is the case, for example, when $S$ is (a piece of) a cylinder 
$\mathbb{S}^1\times [-1/2, 1/2]$. Let $\tau_1$ and $\tau_2$ be the tangent unit
vector fields, respectively orthogonal and parallel to the axis $x_3$ of the
cylinder.  One can show that there exists a sequence 
$[w^h] \in \mathcal{W}$ converging
in $\overline{\mathcal{W}}$, such that for any choice of representatives 
$w^h \in W^{1,2}(S,\mathbb R^3)$, the norms $\|w^h\tau_1\|_{L^2(S)}$
and $\|w^h\vec n \|_{H^{-1}(S)}$ blow up. However, this is the worst case
scenario, and one has: 
\begin{equation*}
\begin{split}
\overline{\mathcal{W}} =  \Big\{v \in\mathcal{D}'(S,\mathbb{R}^3); &~~
v\tau_1 \in H^{-1}(S), ~v\tau_2\in L^2(S),~ v\vec n \in H^{-2}(S),\\
&\mbox{sym }\nabla v \in L^2(S),\\  
&\int_{-1/2}^{1/2} x_3(v\vec n) \equiv const, 
~\int_{-1/2}^{1/2} v\vec n \equiv const, \\
&\int_S v\tau_1 = \int_S v\tau_2 = \int_S x_3(v\tau_1) =0 \Big\}.
\end{split}
\end{equation*}
In this particular case, however, as we will see below, $\overline{\mathcal{W}}$
is isometric to the space of all symmetric $L^2$ matrix fields $B_{tan}$ on $S$.
\end{remark}

\begin{remark}\label{flat}
Flat surfaces $S\subset \mathbb{R}^2$ are not approximately robust.
Indeed:
\begin{equation*}
\begin{split}
\mathcal{B} &= \left\{\mbox{sym }\nabla w;~~ w\in W^{1,2}(S,\mathbb{R}^3)\right\}\\
&= \Big\{B_{tan}\in L^2(S,\mathbb{R}^{2\times 2});~~ B_{tan}^T= B_{tan},
~ \mbox{curl}^T\mbox{curl } B_{tan} = 0\Big\}.
\end{split}
\end{equation*}
On the other hand, given $V\in W^{2,2}(S,\mathbb{R}^3)$
satisfying (\ref{Vass1}) and (\ref{Vass2}), one has $(A^2)_{tan}\in\mathcal{B}$
if and only if $V^3=V\vec n$ solves the degenerate Monge-Amp\`ere equation:
$\det \nabla^2(V^3)=0$ (see \cite{FJMhier}).
\end{remark}

\medskip

A particular class of surfaces which are approximately robust are surfaces for which:
\begin{equation}\label{wholeL2}
\mathcal{B} = \Big\{B_{tan}\in L^2(S,\mathbb{R}^{2\times 2}); ~~
B_{tan}^T= B_{tan}\Big\}.
\end{equation}
As we show below, three main examples of such surfaces are: 
convex surfaces, surfaces of revolution, and developable surfaces 
without flat regions.

\begin{lemma}\label{elptcapprobst}
Assume that $S$ is a simply connected, compact surface of class 
$\mathcal{C}^{2,1}$ with $\mathcal{C}^1$ boundary, and that its shape operator
$\Pi$ is strictly positive (or strictly negative) definite up to the boundary:
\begin{equation}\label{elliptic}
\forall x\in \bar S \quad \forall \tau\in T_xS \qquad
\frac{1}{C}|\tau|^2 \leq \Big(\Pi(x)\tau\Big)\cdot\tau \leq C|\tau|^2,
\end{equation}
Then $S$ is approximately robust, and more precisely (\ref{wholeL2}) holds.
\end{lemma}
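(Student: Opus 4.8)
The goal is to show that for a simply connected, compact $\mathcal{C}^{2,1}$ surface $S$ with elliptic shape operator \eqref{elliptic}, every symmetric $L^2$ tangential matrix field arises as $\lim_{h\to0}\mathrm{sym}\,\nabla w^h$ for some sequence $w^h\in W^{1,2}(S,\mathbb{R}^3)$. Since the inclusion ``$\subseteq$'' in \eqref{wholeL2} is trivial, the whole content is the reverse inclusion, and for this it suffices to find, for a dense subclass of symmetric fields $B_{tan}$ (say $\mathcal{C}^\infty$, or $\mathcal{C}^{1}$), an \emph{exact} solution $w\in W^{1,2}(S,\mathbb{R}^3)$ of $\mathrm{sym}\,\nabla w = B_{tan}$; density of such $B_{tan}$ in $L^2$ and closedness of $\mathcal{B}$ then finish the argument by a diagonal sequence. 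Using the splitting $\mathrm{sym}\,\nabla w = \mathrm{sym}\,\nabla w_{tan} + (w\vec n)\Pi$, and the fact that $\Pi$ is invertible (pointwise, with uniformly bounded inverse, by \eqref{elliptic}), I would first \emph{prescribe the normal component}: set $w\vec n$ to absorb part of $B_{tan}$, and then solve for the tangential part $w_{tan}$. More precisely, the equation to solve is $\mathrm{sym}\,\nabla_{tan} w_{tan} = B_{tan} - (w\vec n)\Pi$ on $S$.

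The cleanest route is to reduce to the flat/linear-elasticity situation via the classical rigidity theory for surfaces with positive curvature. First I would recall that, on an elliptic surface, the relevant operator governing solvability of $\mathrm{sym}\,\nabla_{tan} w_{tan} = E$ for prescribed symmetric $E$ is (after choosing $w\vec n$ appropriately) essentially elliptic: this is the infinitesimal-bending / Darboux–Weyl circle of ideas. Concretely, I would argue that the map
\[
w \longmapsto \mathrm{sym}\,\nabla w = \mathrm{sym}\,\nabla w_{tan} + (w\vec n)\Pi
\]
from $W^{1,2}(S,\mathbb{R}^3)$ into $L^2_{sym}(S)$ has closed range, and that range is dense (hence everything). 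Closedness of the range follows from a Korn–Poincaré inequality of the form \eqref{KP} with $\|\cdot\|_o = \|\cdot\|_{L^2}$: on an elliptic surface one has the \emph{uniform} estimate $\|w\|_{L^2(S)} \le C\big(\|\mathrm{sym}\,\nabla w\|_{L^2(S)} + \|Pw\|\big)$ where $P$ is projection onto the (finite-dimensional) kernel of $\mathrm{sym}\,\nabla$, i.e. the infinitesimal rigid motions restricted to $S$; this is precisely the statement that for strictly convex surfaces the optimal norm in \eqref{KP} is $L^2$ itself, unlike the cylinder example in the preceding remark. I would invoke (or reproduce from Ciarlet \cite{ciarbookvol3} / the linear Koiter shell literature) this Korn inequality on elliptic surfaces; combined with $\dim\ker < \infty$ it gives that $\mathrm{sym}\,\nabla$ has closed range in $L^2$.

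For density of the range I would use the characterization of the orthogonal complement: $B_{tan}\in L^2_{sym}$ is orthogonal to $\mathrm{Range}(\mathrm{sym}\,\nabla)$ iff $B_{tan}$ is a symmetric $L^2$ tensor with $\mathrm{div}\,B_{tan}=0$ in the appropriate surface (distributional) sense and with the extra normal relation $B_{tan}:\Pi = 0$ coming from the $(w\vec n)\Pi$ term. Because $\Pi$ is strictly definite, the pointwise relation $B_{tan}:\Pi=0$ forces nontrivial cancellation, and together with the divergence constraint and ellipticity one concludes $B_{tan}=0$: this is exactly the statement that a closed (or here, simply connected with free boundary) surface of positive Gauss curvature is infinitesimally rigid — the "stress function" dual of infinitesimal rigidity. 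So the complement is trivial, the range is all of $L^2_{sym}$, and \eqref{wholeL2} holds; approximate robustness is then immediate since $(A^2)_{tan}$ is a symmetric $L^2$ field.

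\textbf{Main obstacle.} The crux is establishing the $L^2$ Korn–Poincaré inequality \eqref{KP} on elliptic surfaces (equivalently, the infinitesimal rigidity of the elliptic surface up to the boundary), since this is the step that genuinely uses the sign condition \eqref{elliptic} and the $\mathcal{C}^{2,1}$ regularity, and it is false for non-elliptic $S$ (the cylinder remark shows the optimal norm can drop below $L^2$). I would handle this either by citing the classical results on rigidity of ovaloids / surfaces of positive curvature (Cohn–Vossen, Nirenberg, and the shell-theoretic versions in \cite{ciarbookvol3}), or, if a self-contained argument is wanted, by deriving a second-order elliptic system for $w$ (differentiating the first-order relation and using that the Gauss curvature $\det\Pi$ is bounded below) and applying standard elliptic regularity plus a compactness–contradiction argument to remove the kernel. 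Everything else — the splitting of $\mathrm{sym}\,\nabla w$, the reduction to a dense subclass, the diagonal argument using closedness of $\mathcal{B}$, and the passage from exact solvability to the defining approximation property of $\mathcal{B}$ — is routine functional analysis once the Korn inequality is in hand.
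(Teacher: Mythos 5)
Your overall strategy -- it suffices to show that $\mathrm{sym}\,\nabla$ has dense range in $L^2_{sym}$, which you reduce to exact solvability on a dense subclass -- matches the paper's starting point, and your duality idea (show the annihilator of the range is trivial) is a legitimate alternative to the paper's constructive route. However, two of your key claims are wrong, and the one you single out as the ``main obstacle'' is not the actual crux. First, for an elliptic surface \emph{with free boundary} the kernel of $\mathrm{sym}\,\nabla$ is \emph{infinite}-dimensional: by the classical reduction (used in the paper) an infinitesimal bending corresponds to a solution $\omega$ of a second-order elliptic scalar equation with no boundary conditions, and a convex cap therefore admits infinitely many infinitesimal bendings. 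So your $L^2$ Korn--Poincar\'e inequality with finite-dimensional kernel, and the closed-range assertion built on it, fail; this is precisely the ``non-inhibited'' situation of Sanchez-Palencia that the paper alludes to. Fortunately closedness of the range is not needed, since $\mathcal{B}$ is by definition the $L^2$-closure of the range.

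Second, and more seriously, your justification of the density step conflates two different statements. Triviality of the annihilator $\{B:\ \mathrm{div}_S B=0 \text{ weakly},\ B:\Pi=0\}$ is \emph{not} ``exactly infinitesimal rigidity'': infinitesimal rigidity concerns the kernel of $\mathrm{sym}\,\nabla$, which here is large, whereas the annihilator concerns self-stresses, i.e.\ the cokernel. To kill the annihilator you must pass to a stress function $\phi$ (solving $\mathrm{div}_S B=0$ with the natural boundary condition $B\nu=0$ encoded in the weak formulation), observe that $B:\Pi=0$ becomes a second-order equation for $\phi$ that is elliptic because $\Pi$ is definite, note that $B\nu=0$ on all of $\partial S$ for simply connected $S$ gives vanishing Cauchy data for $\phi$ on $\partial S$, and then invoke a unique continuation theorem. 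That unique-continuation input is the genuine crux, and it is missing from your proposal. The paper attacks the problem from the primal side instead: following Nirenberg, it reduces $\mathrm{sym}\,\nabla w=B_{tan}$ to a single scalar elliptic equation for the rotation field $\omega$, extends the coefficients to a slightly larger domain $\Omega_\epsilon$, and resolves the Fredholm obstruction for the Dirichlet problem there by choosing an auxiliary right-hand side supported in $\Omega_\epsilon\setminus\bar\Omega$, with surjectivity of that choice guaranteed by H\"ormander's unique continuation theorem. Either route works, but both stand or fall on unique continuation for a second-order elliptic operator with low-regularity ($\mathcal{C}^{0,1}$) coefficients -- not on a Korn inequality.
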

\begin{proof} 
{\bf 1.} 
We will prove that every compactly supported, smooth symmetric
bilinear form $B_{tan}$ on $S$, must be of the form:
\begin{equation}\label{koc_uno}
B_{tan} = \mbox{sym }\nabla w,
\end{equation}
for some $w\in W^{1,2}(S, {\mathbb R}^3)$.
This will clearly imply the Lemma.
In \cite{Ni}, this result is proved under an additional assumption that $S$
is closed. The same method, with a slight modification, can be applied in our case.
For convenience of the reader, we present an overview of the argument
and for details of calculations we refer to \cite{Ni} and \cite{HH}, section 9.2.

Since $S$ is simply connected, it can be parametrized by a single chart 
$r\in  \mathcal{C}^{2,1} (\bar\Omega, \mathbb{R}^3)$, where 
$\Omega\subset {\mathbb R}^2$ is a simply connected domain with $\mathcal{C}^1$ 
boundary.  The definite form $[g_{ij}]_{i,j:1..2}$ with 
$g_{ij} = \partial_i r \cdot \partial_j r$ is the pull-back metric on $\Omega$ 
and $\sqrt{|g|}= \sqrt {\det[g_{ij}]}$ is the associated volume form. 
Also, the shape operator $\Pi$ expressed in the coordinates $(x_1, x_2)\in\Omega$ 
is given by $[h_{ij}]_{i,j:1..2}$, where
$h_{ij} = \partial_i (\vec n\circ r)\cdot\partial_j r.$ 
The inverse of $\Pi$ is  denoted $\Pi^{-1}= [h^{ij}]_{i,j:1..2}$. 
The mean curvature $H$ of $S$ equals to $\frac{1}{2}\mbox{tr }([g_{ij}]^{-1}\Pi)$.  

With the above notation, (\ref{koc_uno}) becomes
the following system of partial differential equations in $\Omega$: 
\begin{equation}\label{r-equation}
\left\{\begin{array}{l} 
\partial_1 r \cdot \partial_1 w = B_{11}  \\
\partial_1 r \cdot \partial_2 w + \partial_2 r \cdot \partial_1 w = 2 B_{12}\\
\partial_2 r \cdot \partial_2 w = B_{22}, 
\end{array} \right .
\end{equation} 
where we set $B_{ij}= \partial_i r \cdot B_{tan} \partial_j r$. 
Since $\mbox{sym }\nabla w$ is determined, one concentrates on the
skew part of $\nabla w$. Following \cite{Ni}, we let:
$$ \omega = \frac{1}{\sqrt{|g|}} \left(\partial_1 w \cdot\partial_2 r 
- \partial_2 w\cdot\partial_1 r\right), $$ 
and we observe that $\omega$ must satisfy the equation:
\begin{equation}\label{om-equation}
-\frac{1}{\sqrt{|g|}}\partial_i \left(\sqrt{|g|} h^{ij}\partial_j\omega\right) 
- 2 H \omega = {\mathcal D}(B_{ij}). 
\end{equation} 
The operator $\mathcal{D}:W^{2,2}(\Omega,\mathbb{R}^{2\times 2}) \longrightarrow 
L^2(\Omega,\mathbb{R})$  is a bounded differential operator
which depends on the geometry of $S$. The exact expression of $\mathcal{D}$ 
is given in the references mentioned before, but for our purposes it is enough 
to know its stated regularity.

Now, the following crucial relation between problems (\ref{om-equation})
and (\ref{r-equation}) is a direct consequence of calculations in \cite{Ni}.
\begin{proposition}\label{auxiliary-eq}
Assume that  $[B_{ij}]_{i,j:1..2}\in W^{2,2} (\Omega,\mathbb{R}^{2\times 2})$. 
If (\ref{om-equation}) has a (weak) solution $\omega\in  W^{1,2}(\Omega,\mathbb{R})$,
then the system (\ref{r-equation}) has a solution 
$w\in W^{1,2}(\Omega,\mathbb{R}^3)$. 
\end{proposition} 

\medskip

{\bf 2.} We now show that the hypothesis of Proposition \ref{auxiliary-eq}
is satisfied. Note that we have not imposed any boundary conditions on $\omega$, 
which makes the argument easier.
Extend first the coefficients $h^{ij}$ and $|g|$ to $\tilde h^{ij}$ and 
${|\tilde g|}$, respectively, defined on 
$\Omega_\epsilon = \{x\in {\mathbb R}^2;~~ \mbox{dist }(x,\Omega)<\epsilon\}$ 
for a small $\epsilon >0$. 
This extension can be made so that $[\tilde h^{ij}]_{i,j:1..2}$ satisfies 
the ellipticity condition (\ref{elliptic}) and that $\tilde h^{ij}$, 
$|\tilde g|$ and $1/|\tilde g|$ stay bounded in  $\Omega_\epsilon$. 

In order to prove existence of a solution to (\ref{om-equation}), we want
to find $f_0\in C^\infty_c(\Omega_\epsilon\setminus\Omega)$ such that 
the Dirichlet problem
\begin{equation}\label{om-equation2}
\mathcal{L}\omega = \mathcal{D}(B_{ij}) + f_0
\end{equation} 
has a solution $\omega\in W^{1,2}_0(\Omega_\epsilon,\mathbb{R})$.  
The restriction of $\omega$ to $\Omega$ will, clearly, serve our purpose.
Here the operator $\mathcal{L}$ is given: 
$$\mathcal{L}\omega =  - \frac{1}{\sqrt{|\tilde g|}}\partial_i\left(\sqrt{|\tilde g|}
\tilde h^{ij} \partial_j\omega\right) - 2 \tilde H\omega$$ 
is elliptic and self-adjoint with respect to the scalar product:
$$\langle \omega,\zeta\rangle = \int_{ \Omega_\epsilon}\omega\zeta\sqrt{|\tilde g|}.$$
Therefore, by the classical theory of elliptic operators 
(see e.g. \cite{Ev}, section 6.2, Theorem 4), (\ref{om-equation2}) has a solution 
if and only if its right hand side satisfies the orthogonality condition:
$$ \langle \mathcal{D}(B_{ij})+ f_0, \zeta\rangle = 0,$$
for all solutions $\zeta\in W^{1,2}_0(\Omega_\epsilon,\mathbb{R})$ of the
homogeneous problem: $\mathcal{L}\zeta = 0$ in $\Omega_\epsilon$.
The solution space of this problem is finite dimensional, say spanned by
a basis $\{\zeta_1, \ldots, \zeta_k\}$. 
For $f_0\in C^\infty_c(\Omega_\epsilon\setminus\bar\Omega)$ consider the functional:
$$ L(f_0) = \sum_{i=1}^k \langle f_0,\zeta_i\rangle e_i\in \mathbb{R}^k.$$ 
In view of the above, it suffices to prove that $L$ is surjective.

We now argue by contradiction.  Assume that there exists a nonzero 
$\alpha=(\alpha_1,\ldots, \alpha_k)\in\mathbb{R}^k$ orthogonal to the range of $L$.
In other words:
$$\int_{\Omega_\epsilon\setminus\Omega}\left(\sum_{i=1}^k\alpha_i\zeta_i\right)
f_0 \sqrt{|\tilde g|} = 0 
\quad \forall f_0\in C^\infty_c(\Omega_\epsilon\setminus\bar\Omega),$$
which clearly implies that $\sum_{i=1}^k \alpha_i\zeta_i = 0$ 
in $\Omega_\epsilon\setminus\bar\Omega$. 
By the H\"ormander uniqueness theorem for second order elliptic equations 
(see \cite{Ho}, Theorem 2.4), we obtain $\sum_{i=1}^k \alpha_i\zeta_i =0$ 
in $\Omega_\epsilon$, contradicting the linear independence of 
$\{\zeta_1,\ldots,\zeta_k\}$. 

In view of Proposition \ref{auxiliary-eq}, this ends the proof.
\end{proof}

\begin{lemma}\label{revolution}
Assume that $S$ is rotationally invariant, $\mathcal{C}^2$ up to the boundary,
and let $\bar{S}$ have no intersection with its axis of rotation.
Then (\ref{wholeL2}) holds.
\end{lemma}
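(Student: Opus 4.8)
Following the reduction used in the proof of Lemma~\ref{elptcapprobst}, and since $\mathcal{B}$ is a closed subspace of $L^2(S)$, it is enough to show that every symmetric bilinear form $B_{tan}$ which is smooth and compactly supported in the interior of $S$ can be approximated in $L^2(S)$ by symmetric gradients $\mathrm{sym}\,\nabla w$, $w\in W^{1,2}(S,\mathbb{R}^3)$. The rotational symmetry yields a decisive simplification: expanding $B_{tan}=\sum_{n\in\mathbb{Z}}B^{(n)}e^{\mathrm{i}n\phi}$ in a Fourier series in the angular variable $\phi$, it suffices to produce, for each $N$, a field $w^{(N)}\in W^{1,2}(S,\mathbb{R}^3)$ with $\mathrm{sym}\,\nabla w^{(N)}=\sum_{|n|\le N}B^{(n)}e^{\mathrm{i}n\phi}$, because these partial sums converge to $B_{tan}$ in $L^2$ and $\mathcal{B}$ is closed. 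Thus no bound uniform in $N$ is needed, which is exactly what permits circumventing the obstruction that makes inhibited surfaces (such as closed elliptic ones) fail (\ref{wholeL2}).

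Parametrize $S$ by $r(\phi,s)=\big(\rho(s)\cos\phi,\ \rho(s)\sin\phi,\ \zeta(s)\big)$, with the profile curve $s\mapsto(\rho(s),\zeta(s))$ arclength-parametrized, so that $(\rho')^2+(\zeta')^2=1$; by hypothesis $\rho,\zeta\in\mathcal{C}^2$ and $\rho\ge c>0$. In the moving orthonormal frame $e_1=\rho^{-1}\partial_\phi r$, $e_2=\partial_s r$, $\vec n$ — along which the coordinate lines are lines of curvature — decompose $w=\alpha e_1+\beta e_2+\gamma\,\vec n$. A direct computation recasts the system (\ref{r-equation}) for $\mathrm{sym}\,\nabla w=B_{tan}$ as the three scalar equations: $\partial_\phi\alpha+\rho'\beta+\zeta'\gamma=\rho\,(B_{tan}e_1)\cdot e_1$; $\ \rho\,\partial_s\alpha-\rho'\alpha+\partial_\phi\beta=2\rho\,(B_{tan}e_1)\cdot e_2$; and $\ \partial_s\beta+(\zeta''\rho'-\rho''\zeta')\,\gamma=(B_{tan}e_2)\cdot e_2$, in which all coefficients depend on $s$ only. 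Consequently, expanding $\alpha,\beta,\gamma$ and the frame components of $B_{tan}$ in Fourier series in $\phi$ decouples the problem into, for each $n\in\mathbb{Z}$, a linear ordinary differential system in $s$ on the compact meridian interval.

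For $n\neq 0$ the first equation carries the non-vanishing coefficient $\mathrm{i}n$ in front of $\alpha_n$ and hence determines $\alpha_n$ algebraically in terms of $\beta_n,\gamma_n$; substituting this into the remaining two equations reduces the mode-$n$ problem to a first-order linear system for $(\beta_n,\gamma_n)$ — equivalently, after one more substitution, to a single second-order linear ODE — which on any subinterval where the relevant leading coefficient (a combination of $\zeta'$ and $\zeta''\rho'-\rho''\zeta'$) does not vanish is solvable, for arbitrary Cauchy data, by the elementary theory of linear ODEs with continuous coefficients. For $n=0$ the second equation decouples as an elementary first-order ODE for $\alpha_0$, while the first equation becomes an algebraic relation between $\beta_0$ and $\gamma_0$ which, coupled with the third, is handled in the same way. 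In each case one obtains $w_n\in\mathcal{C}^1$ on the meridian, whence $w^{(N)}:=\sum_{|n|\le N}w_ne^{\mathrm{i}n\phi}\in W^{1,2}(S,\mathbb{R}^3)$ realizes the $N$-th partial sum of $B_{tan}$; by the first paragraph this proves the lemma.

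The step I expect to be most delicate — and the true analogue of the use of the ellipticity of $\Pi$ in Lemma~\ref{elptcapprobst}, which is no longer at our disposal — is the treatment of those values of $s$ at which a principal curvature of $S$, and with it the leading coefficient of the reduced meridian ODE, degenerates. The structural fact to be exploited is $(\rho')^2+(\zeta')^2=1$: the tangent of the profile curve never vanishes, so at any such value of $s$ at least one of $\rho',\zeta'$ is nonzero and the algebraic equation (\ref{r-equation})$_1$ can be used to pin down directly the component that the differential equation fails to control there. One then partitions the compact meridian interval at these points — which are isolated under the given regularity, once one rules out or handles separately profiles degenerating along a whole sub-arc — solves the now well-posed ODE on each subinterval, and glues the local solutions across the partition using the $\mathcal{C}^2$ regularity of $S$, obtaining the desired global $w_n$.
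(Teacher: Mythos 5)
Your setup (reduction to smooth data, Fourier decomposition in the angular variable, solving each mode by an ODE on the meridian, and invoking the closedness of $\mathcal{B}$ to dispense with bounds uniform in the mode number) is sound and parallels the paper's proof, and your frame computation of the system for $(\alpha,\beta,\gamma)$ is correct. The gap is exactly where you locate it, and it is not a removable technicality in the form you propose. In your surface-adapted frame the reduced mode-$n$ system has $-\tfrac{\rho\zeta'}{\mathrm{i}n}\,\partial_s\gamma_n$ as its only $\partial_s\gamma_n$-term, so it degenerates wherever $\zeta'=0$, i.e.\ wherever the unit normal is parallel to the axis. First, under mere $\mathcal{C}^2$ regularity the zero set of $\zeta'$ need not consist of isolated points; it can be a whole sub-arc (a flat annulus $\{1\le x_1^2+x_2^2\le 4,\ x_3=0\}$ satisfies every stated hypothesis), and there the conclusion genuinely fails: by Remark~\ref{flat} a flat piece forces $\mathrm{curl}^T\mathrm{curl}\,B_{tan}=0$, so (\ref{wholeL2}) cannot hold. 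This shows the lemma tacitly assumes the profile is a graph over the axis of rotation (equivalently $\zeta'$ bounded away from zero) — which is precisely how the paper parametrizes $S$ — and that no ODE argument can be pushed "through" the degenerate set in general. Second, even at isolated zeros of $\zeta'$, "solving on each subinterval and gluing" is not justified: the reduced system is singular at such a point, its solutions can blow up as the point is approached, and you give no mechanism for matching Cauchy data across it or for keeping the glued solution in $W^{1,2}$.

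For comparison, the paper avoids the degeneracy altogether by two choices. It parametrizes $S$ as $r(s,\theta)=g(s)\gamma(\theta)+se_3$ with $g\ge c>0$ (profile a graph over the axis, which is the implicit hypothesis), and it decomposes $w$ in the \emph{fixed} cylindrical frame $(\gamma,\gamma',e_3)$ rather than in the tangent--normal frame. Differentiating the third equation of (\ref{revolution-equation}) in $s$ and eliminating $a$ and $c$ then yields the single second-order equation (\ref{aux2}), whose Fourier modes satisfy $b_k''-\tfrac{g''}{g}(1-k^2)b_k=\psi_k/g$: the leading coefficient is $g$, bounded below because $\bar S$ misses the axis, so every mode is a regular linear ODE with continuous coefficients on $[s_0,s_1]$, solvable with arbitrary initial data. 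If you wish to salvage your route, either adopt the graph parametrization (so that your $\zeta'$ never vanishes) or switch to the fixed frame; as written, the treatment of the degenerate set is a genuine gap.
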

\begin{proof}
{\bf 1.} After a suitable rigid motion, the surface $S$ can be parametrized by:
$$ r: (s_0, s_1) \times [0, 2\pi] \to {\mathbb R}^3, \qquad 
r(s,\theta) := g(s)\gamma(\theta) + s e_3, $$ 
for a positive function $g \in \mathcal{C}^2([s_0, s_1],\mathbb{R})$, $e_3 = (0,0,1)$, and 
$\gamma(\theta) = (\cos \theta, \sin \theta, 0)$. 

As in the proof of Lemma \ref{elptcapprobst}, we will show that (\ref{koc_uno}) has a solution
for $B_{tan}$ in an appropriate dense subset of the space in the right hand side 
of (\ref{wholeL2}).
Given $w\in W^{1,2}(S,\mathbb{R}^3)$, write:
$$ w(s,\theta) := a(s,\theta) \gamma(\theta) + b(s,\theta) \gamma'(\theta) + c(s,\theta) e_3 $$ 
and also let:
$$B_{ij} = \partial_i r \cdot B_{tan} \partial_j r.$$ 

The equation (\ref{koc_uno}) can now be expressed  as the following periodic system of partial 
differential equations in $(s_0, s_1) \times [0,2\pi]$ (see \cite{Spivak} chapter 12):
\begin{equation}\label{revolution-equation}
\left\{\begin{array}{l} 
g' \partial_s a + \partial_s c = B_{11} \\
\partial_{\theta} b + a = B_{22} \\
g' (\partial_{\theta} a - b) + g \partial_s b + \partial_{\theta} c = 2B_{12}.  
\end{array} \right.
\end{equation}  
We will prove that (\ref{revolution-equation}) has a solution $W^{1,2}$, periodic in 
$\theta\in [0,2\pi]$, for $B_{ij}$ being finite linear combinations of the Schauder basis 
for $L^2([s_0, s_1] \times [0, 2\pi])$ consisting of eigenfunctions of Laplacian 
under the periodic boundary conditions at $\theta\in\{0, 2\pi\}$ and Neumann boundary conditions 
in $s\in\{s_0, s_1\}$. By density, this will establish the lemma.

\medskip
 
{\bf 2.} Differentiating the third equation in $s$ and using the first two equations 
in (\ref{revolution-equation}) we obtain:
\begin{equation}\label{aux2}
g\partial^2_{s} b - g''(b + \partial^2_{\theta} b) = 2\partial_s B_{12} - \partial_\theta B_{11} 
- g''\partial_\theta B_{22} = :\psi(s,\theta).
\end{equation} 
Note that $\psi\in\mathcal{C}^0$ and for all $s$, $\psi (s,\cdot)$ is a finite 
linear combination of $\{e^{ik\theta}\}_{k\leq N}$, for some integer $N$ independent of $s$. 
Hence:
$$ b(s,\theta) = \sum_{-\infty}^{+\infty} b_k(s) e^{ik\theta} \quad \mbox{ and } \quad 
\psi(s,\theta) =  \sum_{-\infty}^{+\infty} \psi_k(s) e^{ik\theta},$$ 
with $\psi_k = \psi_{-k}$ and $\psi_k =0$ for $k> N$. 
Expressing (\ref{aux2}) in terms of the Fourier coefficients $b_k$ and $\psi_k$ we have:
$$ b''_k - \frac{g''}{g} ( 1- k^2)b_k = \frac{\psi_k}{g}.$$ 
Since the coefficients of the above linear equation are continuous in $[s_0, s_1]$, 
we deduce that there exist unique solutions
$b_k\in \mathcal{C}^2([s_0, s_1],\mathbb{R})$ satisfying $b_k(s_0) = b'_k(s_0) =0.$ 
Also, $b_k = b_{-k}$ and $b_k =0$ for $k>N$. Concluding, the finite linear combination
$b= \sum b_k(s)e^{ik\theta}$ is a $W^{2,2}$ solution to (\ref{aux2}), periodic in $\theta$.
One can now solve the first two equations in (\ref{revolution-equation}) for $a$ and then for $c$,
obtaining a $W^{1,2}$ solution to (\ref{revolution-equation}) and hence also to (\ref{koc_uno}).
\end{proof}

Finally, the following result has been proved in \cite{Schmidt}, Lemma 3.3:

\begin{lemma}
Let S be a $\mathcal{C}^2$ developable surface without flat regions.
That is, assume that for each $x\in S$ the Gauss curvature $\kappa(x) = 0$ while
$\Pi(x)\neq 0$. Then $S$ satisfies (\ref{wholeL2}).
\end{lemma}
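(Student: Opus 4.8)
\medskip

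The plan is to argue as in the proofs of Lemmas~\ref{elptcapprobst} and~\ref{revolution}. Since $\mathcal{B}$ is a closed linear subspace of $\{B_{tan}\in L^2(S,\mathbb{R}^{2\times 2});~ B_{tan}^T=B_{tan}\}$, and since by a partition of unity every symmetric $L^2$ field on $S$ is an $L^2$-limit of finite sums of smooth symmetric fields, each compactly supported in a single coordinate patch, it suffices to show that any smooth symmetric bilinear form $B_{tan}$ supported in one such patch belongs to $\mathcal{B}$; and for this it is enough to produce $w^k\in W^{1,2}(S,\mathbb{R}^3)$ with $\mathrm{sym}~\nabla w^k\to B_{tan}$ in $L^2(S)$, since then $B_{tan}\in\overline{\mathcal{B}}=\mathcal{B}$.

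First I would fix adapted coordinates. By the local structure theory of $\mathcal{C}^2$ developable surfaces (through each point of $S$ passes a unique maximal line segment lying on $S$, and these rulings foliate $S$), around any point one can choose $r\in\mathcal{C}^2(\bar\Omega,\mathbb{R}^3)$, with $\Omega\subset\mathbb{R}^2$, so that $x_2$ is a unit-speed parameter along the rulings. Then $r(x_1,\cdot)$ is affine, hence $\partial_{22}r\equiv 0$; moreover the tangent plane is constant along each ruling, so $\partial_2\vec n\equiv 0$, which gives $h_{22}=0$, and then $\kappa\equiv 0$ together with $\Pi\neq 0$ forces $h_{12}=0$ and $h_{11}\in\mathcal{C}^0(\bar\Omega)$ with $h_{11}\neq 0$; thus in these coordinates the only nonzero entry of $\Pi$ is $h_{11}$. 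Writing $w=a\,\partial_1 r+b\,\partial_2 r+c\,\vec n$ and using that $\mathrm{sym}~\nabla(c\vec n)$ has $c\,h_{11}$ as its only entry (valid because $\vec n\perp T_xS$), the system~(\ref{r-equation}) for $\mathrm{sym}~\nabla w=B_{tan}$ decouples: thanks to $\partial_{22}r\equiv 0$ and $\partial_2\vec n\equiv 0$, its $(2,2)$ and $(1,2)$ equations combine into equations for the tangential coefficients $a,b$ alone that are ordinary differential equations along the rulings, solvable with suitable Cauchy data on a fixed transversal; the $(1,1)$ equation then reads $(\mbox{known function of }a,b)+c\,h_{11}=B_{11}$ and is solved \emph{algebraically} for the normal coefficient $c$ by dividing by $h_{11}\neq 0$. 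This division is precisely where ``no flat regions'' is used: the single nonvanishing curvature provides the one extra scalar degree of freedom that the flat case lacks (compare Remark~\ref{flat}).

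The main obstacle is regularity: $S$ is only $\mathcal{C}^2$, so $h_{11}$ is merely continuous, and dividing by it yields a $c$ that is continuous but in general not $W^{1,2}$, so the $w$ just constructed need not lie in $W^{1,2}(S,\mathbb{R}^3)$. I would circumvent this exactly as in \cite{Schmidt}: replace the geometric coefficients appearing in the reduced system, and in particular $h_{11}$, by smooth approximations, with $h_{11}^k\to h_{11}$ uniformly and $h_{11}^k\neq 0$; solve the smooth-coefficient system to obtain smooth $a^k,b^k,c^k$ which, by continuous dependence for the ordinary differential equations and by the explicit quotient formula for $c^k$, remain bounded in $L^\infty(S)$ together with the relevant derivatives; and set $w^k=a^k\,\partial_1 r+b^k\,\partial_2 r+c^k\,\vec n\in\mathcal{C}^1(S,\mathbb{R}^3)\subset W^{1,2}(S,\mathbb{R}^3)$. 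Computing $\mathrm{sym}~\nabla w^k$ with the \emph{true} geometry, each entry agrees with the corresponding entry of $B_{tan}$ up to error terms carrying a factor of the (uniformly small) mismatch between the true and the smoothed coefficients; hence $\mathrm{sym}~\nabla w^k\to B_{tan}$ in $L^2(S)$, so $B_{tan}\in\mathcal{B}$, and~(\ref{wholeL2}) follows. The bookkeeping of the reduced system and of these error terms is where the argument of \cite{Schmidt} does the work.
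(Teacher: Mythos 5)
The first thing to say is that the paper does not prove this lemma at all: it is stated with the single line ``the following result has been proved in \cite{Schmidt}, Lemma 3.3'', so there is no in-paper argument to compare yours against. Your proposal should therefore be judged on its own, and on those terms the geometric mechanism you identify is the right one and is consistent with the rest of the section: in ruling-adapted (and, to get the clean decoupling you describe, orthogonal) coordinates one has $g_{22}=1$, $\partial_{22}r=0$, $\partial_2\vec n=0$, $h_{12}=h_{22}=0$ and $h_{11}\neq 0$, the $(2,2)$ and $(1,2)$ equations of (\ref{r-equation}) become successively solvable ODEs along the rulings for the tangential coefficients, and the nonvanishing of $h_{11}$ turns the $(1,1)$ equation into an algebraic equation for the normal coefficient $c$ --- exactly the degree of freedom that is absent in the flat case of Remark \ref{flat}.

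The gap is in the approximation step, which is precisely where the real work lies and which you ultimately also defer to \cite{Schmidt}. Tracking regularity: $b$ is obtained by integrating along rulings, the ODE for $a$ has $\partial_1 b$ as a source, and $c$ is an algebraic expression containing $\partial_1 a$; so $\partial_1 a$ (and hence $c$) involves \emph{second} derivatives of the metric, e.g.\ $\partial_{1}\partial_2 g_{11}$, which do not exist for a $\mathcal{C}^2$ surface. After mollifying the geometry at scale $k$, the quantities $\partial_1 a^k$ and $c^k$ therefore involve second derivatives of the mollified metric and are \emph{not} uniformly bounded in $k$; your error terms have the form (uniformly small coefficient mismatch)$\times(\partial_1 a^k$ or $c^k)$, i.e.\ small times unbounded, and the claimed convergence $\mathrm{sym}\,\nabla w^k\to B_{tan}$ in $L^2(S)$ does not follow from ``continuous dependence'' alone. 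One must either couple the mollification rate to the size of the mismatch, or restructure the construction (as is done in the cited reference); as written, the proposal is a correct outline of the strategy but not a proof. A smaller point: you should state explicitly that the ruling coordinates are chosen orthogonal ($g_{12}=0$), since otherwise the $(2,2)$ and $(1,2)$ equations do not decouple into the sequential ODEs you describe.
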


\section{The recovery sequence: proofs of Theorems \ref{thmaindue} 
and \ref{thmaintre}}\label{sec_recovery}

In this section, we want to prove Theorems \ref{thmaindue} and \ref{thmaintre},
that is to define a suitable recovery sequence $y^h$.
Recall the definition (\ref{Qmatrices}).
With a slight abuse of notation, one can write:
\begin{equation}\label{marta}
\mathcal{Q}_2(x,F_{tan}) = \min\{\mathcal{Q}_3(F_{tan} + c\otimes \vec n(x) + \vec n(x)\otimes c); ~~
c\in\mathbb{R}^3\}.
\end{equation}
The unique vector $c$, for which the above minimum is attained
will be called $c(x,F_{tan})$. By uniqueness, the map $c$ is linear in its 
second argument.   

\medskip

Given $B_{tan}\in\mathcal{B}$, there exists a sequence of vector fields $w^h\in W^{1,2}(S,\mathbb{R}^3)$
such that $\mbox{sym }\nabla w^h$ converge in $L^2(S)$ to $B_{tan}$. Without loss of generality,
we may assume that $w^h$ are smooth, and
(by possibly reparametrizing the sequence) that:
\begin{equation}\label{norm}
\lim_{h\to 0} \sqrt{h}\|w^h\|_{W^{2,\infty}(S)} = 0.
\end{equation}
Let $V\in W^{2,2}(S,\mathbb{R}^3)$ be such that $\partial_\tau V(x) =A(x)\tau$,
for all $\tau\in T_xS$, where $A\in W^{1,2}(S,\mathbb{R}^{3\times 3})$ is a skew-symmetric matrix field.
We approximate $V$ by a sequence $v^h\in W^{2,\infty}(S,\mathbb{R}^3)$ such that, 
for a sufficiently small, fixed $\epsilon_0>0$:
\begin{equation}\label{vhapprox}
\begin{split}
& \lim_{h\to 0} \|v^h - V\|_{W^{2,2}(S)} = 0, 
\qquad \frac{\sqrt{e^h}}{h}\|v^h\|_{W^{2,\infty}(S)} \leq \epsilon_0,\\
&  \lim_{h\to 0}\frac{h^2}{e^h}~ \mu\left\{x\in S; ~~ v^h(x) \neq V(x)\right\} =0.
\end{split}
\end{equation}
The existence of such $v^h$ follows by partition of unity and a truncation argument, as a special case
of the Lusin-type result for Sobolev functions in \cite{Liu50} 
(see also Proposition 2 in \cite{FJMhier}).

\medskip

Define a sequence of rescaled deformations $y^h\in W^{1,2}(S^{h_0},\mathbb{R}^3)$:
\begin{equation}\label{rec_seq}
\begin{split}
y^h(x+t\vec n) & = x + \frac{\sqrt{e^h}}{h} v^h(x) + \sqrt{e^h} w^h(x) \\
& \qquad + {t}h/h_0\vec n(x) + {t}/h_0\sqrt{e^h} \Big(\Pi v^h_{tan} - \nabla (v^h\vec n)\Big)(x)\\
& \qquad - th/h_0 \sqrt{e^h} \vec n^T \nabla w^h
+ {t}h/h_0 \sqrt{e^h} d^{0,h}(x) + \frac{t^2}{2h_0^2}h\sqrt{e^h} d^{1,h}(x).
\end{split}
\end{equation}
Notice that if $V\in W^{2,\infty}(S)$ then one may take $v^h = V$ in which case the term
${t}/h_0\sqrt{e^h} (\Pi v^h_{tan} - \nabla (v^h\vec n))$ is exactly
${t}/h_0 \sqrt{e^h}A\vec n$ (see (\ref{diffic}) in Remark \ref{rem3.6}).

The vector fields $d^{0,h}, d^{1,h}\in W^{1,\infty}(S,\mathbb{R}^3)$ are defined so that:
\begin{equation}\label{nd01h}
\lim_{h\to 0} \sqrt{h} \left(\|d^{0,h}\|_{W^{1,\infty}(S)} + \|d^{1,h}\|_{W^{1,\infty}(S)}\right) = 0
\end{equation}
and:
\begin{equation}\label{warp}
\begin{split}
\lim_{h\to 0} d^{0,h} & = 2c\left(x, B_{tan} - \frac{\kappa}{2}(A^2)_{tan}\right) + {\kappa}A^2\vec n 
- \frac{1}{2} \kappa (\vec n^T A^2 \vec n)\vec n
\quad \mbox{ in } L^2(S),\\
\lim_{h\to 0} d^{1,h} & = 2c\left(x, \mbox{sym }(\nabla(A\vec n) - A\Pi)_{tan}\right)
+ \left(\vec n^T A\Pi - \vec n^T\nabla(A\vec n)\right)
\mbox{ in } L^2(S).
\end{split}
\end{equation}

\begin{lemma}\label{lemeasy}
Assume (\ref{kappa}). 
For the sequence $\{y^h\}$ in (\ref{rec_seq}) the convergences (i), (ii) and (iii)
of Theorem \ref{thmaindue} hold. 
\end{lemma}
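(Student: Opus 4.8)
The plan is to verify the three convergences by directly analyzing the explicit formula (\ref{rec_seq}). First I would compute the tangential and normal derivatives of $y^h$ using Proposition \ref{formule}. The leading term of $y^h$ is $x$, so $\nabla_{tan}y^h$ starts from $(\mathrm{Id}+t\Pi)^{-1} = \nabla_{tan}\pi$; all the correction terms carry a prefactor of order $\sqrt{e^h}/h$ or smaller, hence by (\ref{ehass}) (so $\sqrt{e^h}/h \to 0$) together with the boundedness conditions (\ref{norm}), (\ref{vhapprox}), (\ref{nd01h}) they vanish in $L^2(S^{h_0})$ as $h\to 0$. Similarly $\partial_{\vec n}y^h = \frac{h}{h_0}\vec n + (\text{lower order}) \to \partial_{\vec n}\pi = 0$ in $L^2$. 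This gives $\nabla y^h \to \nabla\pi$ in $L^2(S^{h_0})$, and since $y^h - x$ is small in $L^2$ (again by the prefactors), Poincaré gives $y^h \to \pi$ in $W^{1,2}(S^{h_0})$, which is (i).

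For (ii), I would compute $V^h[y^h](x) = \frac{h}{\sqrt{e^h}}\fint_{-h_0/2}^{h_0/2}(y^h(x+t\vec n)-x)\,\mathrm{d}t$. Averaging (\ref{rec_seq}) in $t$ kills every term with an odd power of $t$ (the $th/h_0\vec n$ term, the $A\vec n$-type term, the $\vec n^T\nabla w^h$ term, the $d^{0,h}$ term), and the $t^2$ term contributes something of order $h\sqrt{e^h}$ which, after multiplication by $h/\sqrt{e^h}$, is $O(h^2)\to 0$. What survives is $\frac{h}{\sqrt{e^h}}\big(\frac{\sqrt{e^h}}{h}v^h + \sqrt{e^h}w^h\big) = v^h + \frac{h}{\sqrt{e^h}}\cdot\sqrt{e^h}\,w^h \cdot\frac{1}{h}$... more precisely $v^h + h\,w^h$ up to the $O(h^2)$ remainder; wait, the second term is $\frac{h}{\sqrt{e^h}}\sqrt{e^h}w^h = h w^h \to 0$ in $W^{1,2}(S)$ by (\ref{norm}). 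Hence $V^h[y^h] \to V$ in $W^{1,2}(S)$ by (\ref{vhapprox}), which is (ii). (One must also check the normalization $\fint_S V^h = 0$ is not needed here since we converge to $V$ directly; if a mean-value adjustment is required it only shifts by a constant and does not affect convergence.)

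For (iii), I would differentiate the surviving part of $V^h[y^h]$, namely take $\mathrm{sym}\,\nabla$ and divide by $h$: $\frac{1}{h}\mathrm{sym}\,\nabla V^h[y^h] = \frac{1}{h}\mathrm{sym}\,\nabla v^h + \mathrm{sym}\,\nabla w^h + (\text{remainder})$. The term $\mathrm{sym}\,\nabla w^h \to B_{tan}$ in $L^2(S)$ by construction. The delicate term is $\frac{1}{h}\mathrm{sym}\,\nabla v^h$: since $\partial_\tau V(x) = A(x)\tau$ with $A$ skew, $\mathrm{sym}\,\nabla V = 0$, so one expects $\frac1h\mathrm{sym}\,\nabla v^h \to 0$; this uses (\ref{vhapprox}) — the set where $v^h\neq V$ has measure $o(e^h/h^2)$ and on it $\|\mathrm{sym}\,\nabla v^h\|$ is controlled by $\frac{h}{\sqrt{e^h}}\cdot\frac{\sqrt{e^h}}{h}\epsilon_0$, so the $L^2$ contribution is $\frac1h\cdot\epsilon_0\cdot o(\sqrt{e^h}/h)\cdot\ldots$ — this is exactly the Lusin-truncation estimate from \cite{FJMhier}, and the main obstacle is bookkeeping this remainder carefully rather than any conceptual difficulty. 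One also needs the $t^2 d^{1,h}$ and odd-$t$ remainder terms, after averaging and dividing by $h$, to vanish in $L^2$; these are again controlled by (\ref{nd01h}) and (\ref{vhapprox}). Putting these together yields (iii). The main obstacle, as noted, is the truncation estimate controlling $\frac1h\mathrm{sym}\,\nabla v^h$ and the error set of measure $o(e^h/h^2)$; everything else is a routine but somewhat lengthy expansion of (\ref{rec_seq}).
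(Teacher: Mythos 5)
Your proposal is correct and follows essentially the same route as the paper: compute $V^h[y^h]=v^h+hw^h+\frac{1}{24}h^2d^{1,h}$ explicitly, dispose of the $w^h$ and $d^{1,h}$ contributions via (\ref{norm}) and (\ref{nd01h}), and reduce (iii) to showing $\frac1h\|\mathrm{sym}\,\nabla v^h\|_{L^2(S)}\to 0$ by the Lusin truncation together with (\ref{kappa}). The one factor you write down without justification, namely the pointwise bound $|\mathrm{sym}\,\nabla v^h|\le \frac{h}{\sqrt{e^h}}\epsilon_0\cdot C\frac{\sqrt{e^h}}{h}$ (which is essential, since the crude bound $\|\mathrm{sym}\,\nabla v^h\|_{L^\infty}\le \epsilon_0 h/\sqrt{e^h}$ alone would not close the estimate), is obtained in the paper by combining the Lipschitz bound on $\nabla v^h$ with the vanishing of $\mathrm{sym}\,\nabla v^h$ on $\{v^h=V\}$ and the observation that every point of $S$ lies within distance $o(\sqrt{e^h}/h)$ of that set --- otherwise its complement would contain a surface ball of measure at least $Ce^h/h^2$, contradicting (\ref{vhapprox}) --- which is exactly the bookkeeping you anticipated.
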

\begin{proof}
(i) follows by the normalization (\ref{norm}), (\ref{vhapprox}) and (\ref{nd01h}).
For (ii) and (iii) notice that:
\begin{equation*}
\begin{split}
V^h[y^h] &= v^h + h w^h + \frac{1}{24} h^2 d^{1,h}\\
\frac{1}{h} \mbox{sym }\nabla V^h[y^h] & = \frac{1}{h}\mbox{sym } \nabla v^h +
\mbox{sym } \nabla w^h + \frac{1}{24} h \mbox{sym }\nabla d^{1,h}.
\end{split}
\end{equation*}
The proof will be achieved once we establish that:
\begin{equation}\label{bad_term}
\lim_{h\to 0} \frac{1}{h}\|\mbox{sym }\nabla v^h\|_{L^2(S)} = 0.
\end{equation}
Since the Lipschitz constant of each $\nabla v^h$ is bounded by $\epsilon_0 \frac{h}{\sqrt{e^h}}$,
and $\mbox{sym }\nabla v^h=0$ on the set $\{x\in S;~ v^h(x)= V(x)\}$, we have:
$$|\mbox{sym }\nabla v^h(x)|\leq C\frac{h}{\sqrt{e^h}} \mbox{dist } \Big(x, \{v^h=V\}\Big).$$
Now we claim that the right hand side above converges to $0$, in $L^\infty(S)$.
For otherwise there would be $\mbox{dist } (x^h, \{v^h=V\})\geq C\frac{\sqrt{e^h}}{h}$, for some
sequence $x^h\in S$. Consequently, denoting by $B_{x^h}(r)$ the ball in $\mathbb{R}^3$ centered
at $x^h$ and radius $r$, we would obtain:
$$\mu\{x\in S; ~ v^h(x)\neq V(x)\}\geq \left|S \cap B_{x^h}\left(\frac{1}{2}
\mbox{dist }(x^h,~ \{v^h= V\})\right)\right| \geq C\frac{e^h}{h^2}, $$
contradicting (\ref{vhapprox}). In the last inequality above we used that the surface $S$ is of class 
$\mathcal{C}^1$, with Lipschitz continuous boundary.
We thus obtain:
$$\lim_{h\to 0} \|\mbox{sym }\nabla v^h\|_{L^\infty(S)} = 0.$$
On the other hand:
\begin{equation*}
\begin{split}
\frac{1}{h}\|\mbox{sym }\nabla v^h\|_{L^2(S)} &\leq \frac{1}{h}
\mu\{x\in S; ~ v^h(x) \neq V(x)\}^{1/2} \cdot \|\mbox{sym } \nabla v^h\|_{L^\infty(S)}\\
&\leq C\frac{\sqrt{e^h}}{h^2} \|\mbox{sym } \nabla v^h\|_{L^\infty(S)}.
\end{split}
\end{equation*}
The two statements above imply (\ref{bad_term}).
\end{proof}

\bigskip

\noindent {\bf Proof of Theorem \ref{thmaindue}}.

\noindent We will prove that:
\begin{equation}\label{limsup}
\limsup_{h\to 0} \frac{1}{e^h} I^h(y^h) \leq I(V,B_{tan}) + \eta,
\end{equation}
where $\eta$ denotes an error quantity, with the property:
\begin{equation}\label{etaepsilon}
\eta\to 0 \quad \mbox{ as } \epsilon_0\to 0.
\end{equation}  
In view of Theorem \ref{thmainuno}, this will imply (iv) for a recovery sequence 
obtained through a diagonal argument, when $\epsilon_0\to 0$. Clearly, the assertions 
(i) - (iii) will follow as well, by Lemma \ref{lemeasy}.

\medskip

{\bf 1.}  We first look closer at quantities $\nabla_h y^h$.
By Proposition \ref{formule}, it follows that:
\begin{equation}\label{form}
\begin{split}
&(\nabla_h y^h) (x + t\vec n) \vec n(x) 
= \vec n + \frac{\sqrt{e^h}}{h} \Big(\Pi v_{tan}^h - \nabla(v^h\vec n)\Big) \\
& \qquad\qquad\qquad\qquad\qquad - \sqrt{e^h} \vec n^T\nabla w^h
+ \sqrt{e^h} d^{0,h} + t/h_0\sqrt{e^h} d^{1,h},\\
&(\nabla_h y^h) (x + t\vec n)\tau 
= \nabla y^h(x + t\vec n) (\mbox{Id} + t\Pi) (\mbox{Id} + th/h_0\Pi)^{-1}\tau  \\
& \qquad
= \Bigg(\mbox{Id} + \frac{\sqrt{e^h}}{h}\nabla v^h + \sqrt{e^h}\nabla w^h + th/h_0 \Pi\\
& \qquad\qquad
+ t/h_0 \sqrt{e^h}\nabla \Big(\Pi v_{tan}^h - \nabla(v^h\vec n)\Big)
- th/h_0 \sqrt{e^h}\nabla (\vec n^T \nabla w^h) \\
& \qquad\qquad\qquad
+ th/h_0 \sqrt{e^h} \nabla d^{0,h}
+ \frac{t^2}{2h_0^2}h \sqrt{e^h}\nabla d^{1,h}\Bigg)(\mbox{Id} + th/h_0\Pi)^{-1}\tau,
\end{split}
\end{equation}
for all $\tau\in T_xS$.

By (\ref{norm}), (\ref{vhapprox}) and (\ref{nd01h}) one has:
$\|\nabla_h y^h - \mbox{Id}\|_{L^\infty(S^{h_0})} \leq C\epsilon_0$.
It now follows by polar decomposition theorem (assuming $\epsilon_0$ to be sufficiently small),
that $\nabla_h y^h$ is a product of a proper rotation and the well defined square root 
of $(\nabla_h y^h)^T\nabla_h y^h$.
By properties of the energy density function and Taylor expansion, we obtain:
\begin{equation*}
W(\nabla_h y^h) = W\left(\sqrt{(\nabla_h y^h)^T\nabla_h y^h}\right)
= W\left(\mbox{Id} + \frac{1}{2} K^h + \mathcal{O}(|K^h|^2)\right),
\end{equation*}
where:
$$K^h =  (\nabla_h y^h)^T\nabla_h y^h - \mbox{Id}.$$
Clearly:
\begin{equation}\label{Khsmall}
\|K^h\|_{L^\infty(S^{h_0})} \leq C\epsilon_0,
\end{equation}
and so reasoning as in (\ref{modulus_continuity}), the above expansion in $W$ yields:
\begin{equation}\label{Wdopp}
\begin{split}
\frac{1}{e^h} W(\nabla_h y^h) = \frac{1}{2} \mathcal{Q}_3\left(\frac{1}{2\sqrt{e^h}} K^h + 
\frac{1}{\sqrt{e^h}}\mathcal{O}(|K^h|^2)\right) + 
\frac{1}{\sqrt{e^h}} \eta \cdot \mathcal{O}(|K^h|^2),
\end{split}
\end{equation}
where $\eta$ depends only on $\epsilon_0$ and satisfies (\ref{etaepsilon}).

\medskip

{\bf 2.} Using (\ref{form}) we now calculate $K^h$. By $Error$ we will 
cumulatively denote all the terms with the property:
\begin{equation}\label{def03}
\lim_{h\to 0}\frac{1}{\sqrt{e^h}} \|Error\|_{L^2(S^{h_0})} =0.
\end{equation}
We start with the tangential minor of $K^h$:
\begin{equation*}
\begin{split}
K^h_{tan}&(x + t\vec n)  = (\mbox{Id} + th/h_0\Pi)^{-1}\Bigg[\mbox{Id}
+ 2\frac{\sqrt{e^h}}{h}\mbox{sym } \nabla v^h + 2 \sqrt{e^h}\mbox{sym } \nabla w^h 
+ 2 th/h_0 \Pi \\
& \qquad
+ 2 t/h_0\sqrt{e^h}\mbox{sym } \nabla\Big(\Pi v_{tan}^h - \nabla(v^h\vec n)\Big)
+ \frac{e^h}{h^2} (\nabla v^h)^T\nabla v^h + t^2 h^2/h_0^2 \Pi^2\\
& \qquad
+ 2t \sqrt{e^h}/h_0 \mbox{sym }\Big(\Pi\nabla v^h\Big) 
+ Error \Bigg](\mbox{Id} + th/h_0\Pi)^{-1} - \mbox{Id}\\
& = (\mbox{Id} + th/h_0\Pi)^{-1}\sqrt{e^h}\Bigg[2 \mbox{sym } \nabla w^h 
+ \frac{\sqrt{e^h}}{h^2} (\nabla v^h)^T\nabla v^h\\
& \qquad\qquad\qquad\qquad \qquad\qquad
+ 2t/h_0\mbox{sym }\nabla\Big(\Pi v^h_{tan} - \nabla(v^h\vec n)\Big)\\
& \qquad\qquad \qquad\qquad\qquad\qquad
+ 2t/h_0\mbox{sym }\Big(\Pi\nabla v^h\Big)
\Bigg](\mbox{Id} + th/h_0\Pi)^{-1} + Error,
\end{split}
\end{equation*}
where we used the formulae:
\begin{equation*}
\begin{split}
&(\mbox{Id } + F)^T (\mbox{Id } + F) = \mbox{Id } + 2\mbox{sym } F + F^T F,\\
&F_1^{-1} F F_1^{-1} - \mbox{Id } = F_1^{-1} (F - F_1^{2}) F_1^{-1}.
\end{split}
\end{equation*}
Notice that the quantity $Error$ contains the term $\frac{\sqrt {e^h}}{h} \mbox{sym }\nabla v^h$,
resulting from the relaxation of the constraint (\ref{Vass1}), (\ref{Vass2}) 
on the small set $\{v^h\neq V\}$, and other product terms, eg:
$\frac{e^h}{h}(\nabla v^h)^T \nabla(\Pi v^h_{tan} - \nabla(v^h\vec n))$.
The convergence of $\frac{1}{h}\|\mbox{sym }\nabla v^h\|_{L^2(S^{h_0})}$ to $0$
has been proved in (\ref{bad_term}). All other terms in $Error$ can be dealt with by 
repeated use of (\ref{vhapprox}), (\ref{norm}), H\"older and Sobolev inequalities, eg:
\begin{equation*}
\begin{split}
\frac{\sqrt{e^h}}{h}\|(\nabla v^h)^T& \nabla(\Pi v^h_{tan} - \nabla(v^h\vec n))\|_{L^2(S)}
\leq C \frac{\sqrt{e^h}}{h} \|\nabla v^h\|_{L^4(S)} \|v^h\|_{W^{2,4}(S)} \\
&\leq  C \frac{\sqrt{e^h}}{h} \|\nabla v^h\|_{W^{1,2}(S)} \|v^h\|_{W^{2,\infty}(S)}^{1/2}
\|v^h\|_{W^{2,2}(S)}^{1/2} \\
&\leq  C \frac{\sqrt{e^h}}{h} \|v^h\|_{W^{2,\infty}(S)}^{1/2}
\longrightarrow 0 \quad \mbox{ as } h\to 0.
\end{split}
\end{equation*}

Now, the normal minor of $K^h$ is calculated as:
\begin{equation*}
\begin{split}
\vec n^T K^h(x+t\vec n)\vec n = \sqrt{e^h}\Bigg(\frac{\sqrt{e^h}}{h^2} 
\Big|\Pi v_{tan}^h - \nabla(v^h\vec n)\Big|^2 + 2 d^{0,h} \vec n + 2 t/h_0 d^{1,h} \vec n
\Bigg) + Error.
\end{split}
\end{equation*}

The remaining coefficients of the symmetric matrix $K^h(x+ t\vec n)$ are, for $\tau\in T_x S$:
\begin{equation*}
\begin{split}
 \tau^T &K^h(x+t\vec n)\vec n = \vec n^T\Bigg[\frac{\sqrt{e^h}}{h}\nabla v^h +
t/h_0 \sqrt{e^h} \nabla \Big(\Pi v^h_{tan} - \nabla(v^h\vec n)\Big)
\Bigg](\mbox{Id} + th/h_0\Pi)^{-1}\tau\\
& \qquad  
+ \Bigg[\frac{\sqrt{e^h}}{h}\Big(\Pi v^h_{tan} - \nabla(v^h\vec n)\Big)^T 
+ (\sqrt{e^h} d^{0,h} + t/h_0 \sqrt{e^h}d^{1,h})^T \\
&\qquad\qquad\qquad
+ \frac{e^h}{h^2}\Big(\Pi v^h_{tan} - \nabla(v^h\vec n)\Big)^T \nabla v^h \\
&\qquad\qquad\qquad
+ t/h_0 \sqrt{e^h}\Big(\Pi v^h_{tan} - \nabla(v^h\vec n)\Big)^T \Pi
\Bigg](\mbox{Id} + th/h_0\Pi)^{-1}\tau + Error\\
& = \sqrt{e^h} \Bigg[t/h_0 \vec n^T\nabla\Big(\Pi v^h_{tan} - \nabla(v^h\vec n)\Big) 
+ \frac{\sqrt{e^h}}{h^2} \Big(\Pi v^h_{tan} - \nabla(v^h\vec n)\Big) ^T \nabla v^h\\
& \qquad\qquad\qquad
+ t/h_0 \Big(\Pi v^h_{tan} - \nabla(v^h\vec n)\Big)^T\Pi \\
& \qquad\qquad\qquad\qquad\qquad\quad
+ (d^{0,h} + t/h_0 d^{1,h})^T 
\Bigg](\mbox{Id} + th/h_0\Pi)^{-1}\tau + Error.
\end{split}
\end{equation*}
We leave the estimation in $Error$ to the reader.
The convergence of the most troublesome term:
\begin{equation*}
\lim_{h\to 0} \frac{1}{h}\|\vec n^T \nabla v^h 
+ (\Pi v_{tan}^h - \nabla(v^h\vec n))^T\|_{L^2(S^{h_0})} = 0.
\end{equation*}
can be proved as in (\ref{bad_term}), since the quantity in question vanishes on the 
set $\{v^h = V\}$. Therefore, $\|\vec n^T \nabla v^h 
+ (\Pi v_{tan}^h - \nabla(v^h\vec n))^T\|_{L^\infty(S)}$ converges to $0$, as $h\to 0$,
and the displayed convergence follows by the last assertion in (\ref{vhapprox}).

\medskip

{\bf 3.} In view of (\ref{def03}) we may now write (with a slight abuse of notation)
\begin{equation}\label{Kconv}
\lim_{h\to 0} \frac{1}{2\sqrt{e^h}} K^h = K_1(x)_{tan} + \frac{t}{h_0}K_2(x)_{tan}
+ (\zeta\otimes \vec n + \vec n\otimes \zeta) \quad \mbox{ in } L^2(S^{h_0}),
\end{equation}
where the symmetric symmetric matrix fields $(K_i)_{tan}\in L^2(S, \mathbb{R}^{2\times 2})$
and the vector field $\zeta\in L^2(S^{h_0}, \mathbb{R}^3)$ are given by:
\begin{equation}\label{Kdef}
\begin{split}
K_1(x)_{tan} &= B_{tan} - \frac{\kappa}{2} (A^2)_{tan},\\
K_2(x)_{tan} &= \mbox{sym } \left(\nabla(A\vec n) - A\Pi\right)_{tan},\\
\zeta(x+t\vec n) & = c\Big(x,  B_{tan} - \frac{\kappa}{2} (A^2)_{tan}\Big)
+ \frac{t}{h_0} c\Big(x,\mbox{sym } \left(\nabla(A\vec n) - A\Pi\right)_{tan}\Big).
\end{split}
\end{equation}

Further, we observe:
\begin{equation}\label{Kthird}
\lim_{h\to 0} \frac{1}{e^h}\int_{S^{h_0}} |K^h|^4 = 0.
\end{equation}
Indeed, (\ref{Kconv}) implies that $\frac{1}{\sqrt{e^h}} K^h$ converges pointwise a.e. in $S^{h_0}$.
Thus $\frac{1}{e^h} |K^h|^4$ converges a.e. to $0$. By the boundedness of $K^h$ in (\ref{Khsmall}):
$\frac{1}{e^h}|K^h|^4 \leq C \frac{1}{e^h}|K^h|^2$, and the dominated convergence theorem 
achieves (\ref{Kthird}).

\medskip

{\bf 4.}
Finally, we prove now (\ref{limsup}). By (\ref{Kthird}), it follows that the argument 
of $\mathcal{Q}_3$ in (\ref{Wdopp}) converges in $L^2(S^{h_0})$ to the same limit as 
$\frac{1}{2\sqrt{e^h}} K^h$ in (\ref{Kconv}).
Using Proposition \ref{formule}, (\ref{Kconv}) and (\ref{marta}), we obtain:
\begin{equation*}
\begin{split}
\limsup_{h\to 0} \frac{1}{e^h}& I^h(y^h)  = 
\limsup_{h\to 0} \frac{1}{e^h} \int_S \fint_{-h_0/2}^{h_0/2} W(\nabla_h y^h) \cdot
\det (\mbox{Id} + th/h_0\Pi)~\mbox{d}t \mbox{d}tx\\
& \leq \frac{1}{2} \limsup_{h\to 0} \int_S \fint_{-h_0/2}^{h_0/2} \mathcal{Q}_3 \left(\frac{1}{2\sqrt{e^h}} 
K^h(x+t\vec n)\right)\cdot\det (\mbox{Id} + th/h_0\Pi)~\mbox{d}t\mbox{d}x \\
&\qquad\qquad\qquad
+ C\eta \limsup_{h\to 0}\frac{1}{e^h}\int_{S^{h_0}}|K^h|^2 \\
& = \frac{1}{2}\int_S \fint_{-h_0/2}^{h_0/2}\mathcal{Q}_3 \left(\lim_{h\to 0}\frac{1}{2\sqrt{e^h}}K^h\right)
+C\eta\left\|\lim_{h\to 0}\frac{1}{2\sqrt{e^h}}K^h\right\|^2_{L^2(S^{h_0})}\\
& \leq \frac{1}{2}\int_S \fint_{-h_0/2}^{h_0/2}\mathcal{Q}_2\left(x, K_1(x)_{tan}
+\frac{t}{h_0}K_2(x)_{tan}\right) ~\mbox{d}t\mbox{d}x  + C\eta\\
& = \frac{1}{2}\int_S \fint_{-h_0/2}^{h_0/2}\mathcal{Q}_2\left(x, K_1(x)_{tan}\right) 
+\frac{t^2}{h_0^2}\mathcal{Q}_2\left(x,K_2(x)_{tan}\right)~\mbox{d}t\mbox{d}x + C\eta,
\end{split}
\end{equation*}
which implies (\ref{limsup}) in view of (\ref{Kdef}).
\endproof

\medskip

\begin{remark}
A more careful calculation reveals the exact convergence:
$$\lim_{h\to 0}\frac{1}{e^h} I^h(y^h) = I(V, B_{tan}),$$
for the recovery sequence (\ref{rec_seq}).
We have used another argument for the sake of a more transparent presentation.
\end{remark}

\medskip

\noindent {\bf Proof of Theorem \ref{thmaintre}.}

\noindent When $\kappa = 0$, the recovery sequence (for a given $V\in W^{2,2}(S,\mathbb{R}^3)$ 
satisfying (\ref{Vass1}) and (\ref{Vass2})) is given again by (\ref{rec_seq}), where we put
$w^h = 0$, $B_{tan}=0$ and $\kappa = 0$.  That is:
\begin{equation*}
\begin{split}
y^h(x+t\vec n)  & = x + \frac{\sqrt{e^h}}{h} v^h(x)+ {t}h/h_0\vec n(x) \\
& \qquad + {t}/h_0\sqrt{e^h} \Big(\Pi v^h_{tan} - \nabla_{tan}(v^h\vec n)\Big)(x)
+ \frac{t^2}{2h_0^2}h\sqrt{e^h} d^{1,h}(x),
\end{split}
\end{equation*}
where $d^{1,h}\in W^{1,\infty}(S,\mathbb{R}^3)$ satisfies (\ref{nd01h}) and the second
formula in (\ref{warp}).

Clearly, $y^h$ and $V^h[y^h]$ converge in $W^{1,2}(S^{h_0})$ to $\pi$ and $V$, respectively,
as in Lemma \ref{lemeasy}. The convergence of the scaled energy follows as in 
Theorem \ref{thmaindue} (iv).
\endproof

\section{The convergence of minimizers:  proof of Theorem \ref{thmaincinque}}\label{sec_Gamma}

Recall that the considered sequence of forces $f^h\in L^2(S^h,\mathbb{R}^3)$
with zero mean: $\int_{S^h} f^h=0$, has the form: 
$$f^h(x+ t\vec n(x)) = h\sqrt{e^h}\det(\mbox{Id} + t\Pi(x))^{-1} f(x).$$

\begin{lemma}\label{Gammafh}
Let $u^h\in W^{1,2}(S^h,\mathbb{R}^3)$ be a sequence
of deformations such that $V^h[y^h]$ converges in $L^2(S)$ to some
$V:S\longrightarrow \mathbb{R}^3$ and let $Q^h\in\mathbb{R}^{3\times 3}$
converge to some $Q$.
Then:
\begin{equation*}
\lim_{h\to 0} \frac{1}{e^h} \frac{1}{h} \int_{S^h} f^h \cdot Q^h(u^h-{\rm id}) 
= \int_S f\cdot QV. 
\end{equation*}
\end{lemma}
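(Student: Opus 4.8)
The plan is to reduce the integral over the three-dimensional domain $S^h$ to an integral over the mid-surface $S$, and then to identify the limit with the help of the convergence $V^h[\tilde y^h]\to V$ already established. First I would rewrite the force integral using the explicit form of $f^h$ and the change of variables $z = x+t\vec n(x)$, whose Jacobian is $\det(\mathrm{Id}+t\Pi(x))$. The factor $\det(\mathrm{Id}+t\Pi(x))^{-1}$ in the definition of $f^h$ exactly cancels this Jacobian, so that
\begin{equation*}
\frac{1}{h}\int_{S^h} f^h\cdot Q^h(u^h-\mathrm{id})
= h\sqrt{e^h}\int_S \fint_{-h/2}^{h/2} f(x)\cdot Q^h\big(u^h(x+t\vec n)-(x+t\vec n)\big)~\mathrm{d}t~\mathrm{d}x.
\end{equation*}
Here I must be a little careful about the meaning of $u^h-\mathrm{id}$: the identity map on $S^h$ sends $x+t\vec n$ to $x+t\vec n$, and the $t\vec n$ contribution will be shown to be negligible.

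Next I would pass to the rescaled deformations $y^h$ on the fixed domain $S^{h_0}$, via $u^h(x+t(h/h_0)\vec n) = y^h(x+t\vec n)$; after the corresponding substitution $t\mapsto (h/h_0)t$ the inner average becomes $\fint_{-h_0/2}^{h_0/2} f(x)\cdot Q^h(y^h(x+t\vec n)-x-(h/h_0)t\vec n)~\mathrm{d}t$. Dividing by $e^h$ and recalling that $V^h[y^h](x) = \frac{h}{\sqrt{e^h}}\fint_{-h_0/2}^{h_0/2}(y^h(x+t\vec n)-x)~\mathrm{d}t$, the quantity to evaluate is precisely
\begin{equation*}
\int_S f(x)\cdot Q^h\Big(V^h[y^h](x) - \frac{h}{\sqrt{e^h}}\fint_{-h_0/2}^{h_0/2}\frac{h}{h_0}t\vec n~\mathrm{d}t\Big)~\mathrm{d}x,
\end{equation*}
where the $t\vec n$-term vanishes because $\fint_{-h_0/2}^{h_0/2}t~\mathrm{d}t = 0$. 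Hence what remains is simply $\int_S f\cdot Q^h V^h[y^h]$, and since $Q^h\to Q$ (boundedly) and $V^h[y^h]\to V$ strongly in $L^2(S)$ with $f\in L^2(S,\mathbb{R}^3)$, the product converges to $\int_S f\cdot QV$. (One uses here that $\{Q^h\}$ is bounded, being a sequence of rotations, and that $L^2$ strong convergence times a bounded sequence converging a.e.\ passes to the limit against an $L^2$ test function.)

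The main subtlety — the only place requiring real care rather than bookkeeping — is the treatment of the ``identity'' term and the justification that the Jacobian factor in the change of variables is exactly the one appearing in the definition of $f^h$, so that no residual $h$-dependent weight survives. Once that cancellation is pinned down, the argument is a clean combination of the change of variables, the exact formula for $V^h$, the vanishing of the odd moment $\fint t~\mathrm{d}t$, and the $L^2$ convergence $V^h[y^h]\to V$. No compactness beyond what is already assumed in the statement is needed, and the conclusion holds for the full sequence (not merely a subsequence), since it only uses the hypothesized convergences of $V^h[y^h]$ and $Q^h$.
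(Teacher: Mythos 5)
Your proposal is correct and follows essentially the same route as the paper: cancel the Jacobian against the $\det(\mathrm{Id}+t\Pi)^{-1}$ factor in $f^h$, drop the odd moment $\fint t\,\mathrm{d}t=0$, recognize the remaining expression as exactly $\int_S f\cdot Q^h V^h[y^h]$, and pass to the limit using the assumed $L^2$ convergence. The only (harmless) slip is justifying boundedness of $Q^h$ by calling them rotations — the lemma only assumes $Q^h\in\mathbb{R}^{3\times 3}$ convergent, which already gives boundedness.
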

\begin{proof}
We have:
\begin{equation*}
\begin{split}
\frac{1}{e^h} \frac{1}{h} \int_{S^h} f^h\cdot Q^h &(u^h -{\rm id})  
 = \frac{1}{e^h}\int_S f^h(x)\cdot Q^h \fint_{-h/2}^{h/2} u^h(x+t\vec n) - x~\mbox{d}t
\mbox{d}x \\
& = \frac{1}{e^h} \int_S f^h(x) \cdot Q^h\frac{\sqrt{e^h}}{h} V^h[y^h] ~\mbox{d}x
= \int_S f\cdot Q^h V^h[y^h], 
\end{split}
\end{equation*}
and the result follows.
\end{proof}

\medskip

\noindent {\bf Proof of Theorem \ref{thmaincinque}}.

\noindent {\bf 1.} We first show that given any $u^h\in W^{1,2}(S^h,\mathbb{R}^3)$
there exists $Q^h\in SO(3)$ and $c^h\in \mathbb{R}^3$ such that
$w^h = (Q^h)^T u^h - c^h - \mbox{id}$ satisfies:
\begin{equation}\label{a0}
\|w^h\|^2_{W^{1,2}(S^h)} \leq Ch^{-1} I^h(y^h).
\end{equation}
Indeed, by Lemma \ref{approx} and properties of the energy density $W$,
it follows that:
\begin{equation}\label{a1}
\begin{split}
I^h(y^h) &\geq Ch^{-1}\int_{S^h}\mbox{dist}^2(\nabla u^h, SO(3))
\geq Ch^{-1}\int_{S^h}|\nabla u^h - R^h\pi|^2\\
&\geq Ch^{-1}\int_{S^h} |(Q^h)^T\nabla u^h -\mbox{Id}|^2 - 
Ch^{-1}\int_{S^h}|(Q^h)^T R^h\pi -\mbox{Id}|^2\\
&\geq  Ch^{-1}\int_{S^h} |\nabla w^h|^2 - 
Ch^{-2}I^h(y^h).
\end{split}
\end{equation}
Actually, the assumption of smallness of $h^{-3} E(u^h, S^h)$ cannot be expected
to hold here.  In this general case one exchanges the $SO(3)$-valued matrix
field $R^h$ with $\tilde R^h\in W^{1,2}(S,\mathbb{R}^{3\times 3})$ given 
in the proof of Lemma \ref{approx}.  Then $Q^h$ for which the above estimates
are true may be taken as a rotation in $SO(3)$ with minimal distance from
$\fint_S \tilde R^h$.

By (\ref{a1}) it follows that:
$$\|\nabla w^h\|^2_{L^2(S^h)} \leq Ch I^h(y^h) + Ch^{-1} I^h(y^h),$$
which implies (\ref{a0}) in view of the Poincar\'e inequality,
for an appropriately chosen constant $c^h$.  A proof of the uniform Poincar\'e
inequality on $S^h$ can be found, for example, in \cite{LewMul}.

\medskip

{\bf 2.} Notice that by the definition of $m^h$ we have:
$$ \frac 1h \int_{S^h} f^h(z)\cdot Q^h z~\mbox{d}z 
= h\sqrt{e^h}\int_S\fint_{-h/2}^{h/2} f(x)\cdot Q^h (x+t\vec n) \leq m^h.$$
Therefore, in view of (\ref{fhass}) and (\ref{a0}) we obtain:
\begin{equation}\label{a2}
\begin{split}
J^h(y^h) - I^h&(y^h) = m^h - \frac{1}{h}  \int_{S^h}f^h u^h \\ 
& = - \frac{1}{h}  \int_{S^h} (Q^h)^T f^h \cdot w^h + m^h - \frac{1}{h}  
\int_{S^h} f^h\cdot Q^h z~\mbox{d}z   \\ 
&\geq - \frac{1}{h}\int_{S^h} (Q^h)^T f^h\cdot w^h 
\geq -C h^{1/2}\sqrt{e^h} \|f\|_{L^2(S)} \|w^h\|_{L^2(S^h)}\\ 
& \geq - C \sqrt{e^h} I^h(y^h)^{1/2}.
\end{split}
\end{equation}

We now prove the first claim of the theorem. 
Taking $u^h(z)=\bar{Q} z$ for any $\bar{Q}\in \mathcal{M}$, we notice that 
$J^h(y^h) = 0$. Hence $\inf J^h \leq 0$.  
The lower bound of $\frac{1}{e^h} J^h$ follows
from (\ref{a2}):
\begin{equation}\label{a3}
\frac{1}{e^h} J^h(y^h) \geq \frac{1}{e^h} I^h(y^h) - 
C \left(\frac{1}{e^h} I^h(y^h) \right)^{1/2}, 
\end{equation}
which proves (i).

\medskip

{\bf 3.} To prove (ii), let $u^h$ be a minimizing sequence of $\frac{1}{e^h} J^h$, 
as defined in (\ref{min_seq}).
Then $\{\frac{1}{e^h} J^h(y^h)\}$ is bounded, and therefore, by (\ref{a3})
$\{\frac{1}{e^h} I^h(y^h)\}$ is also bounded. The convergences of
$\tilde y^h$, $V^h[\tilde y^h]$ and $\frac{1}{h}\mbox{sym }\nabla V^h[\tilde y^h]$
follow from Theorem \ref{thmainuno}. 
In particular:
\begin{equation}\label{reza2}
\liminf_{h\to 0} \frac{1}{e^h} I^h(y^h) \geq I(V, B_{tan}).
\end{equation}
The strong convergence of $\frac{1}{h}\mbox{sym }\nabla V^h[\tilde y^h]$ 
is deduced from the strong convergence of the sequence $\mbox{sym}G^h_{tan}$ 
in Lemma \ref{lem3.6}. 
This last result is in turn implied by the convergence of $\int_S\mathcal{Q}_3(G^h)$
(valid because the sequence is minimizing), positive definiteness
of $\mathcal{Q}_3$ on symmetric matrices, and the weak convergence of $G^h$.
Since the details are exactly the same as in \cite{FJMhier} section 7.2, we omit them.

We now prove that the limit $\bar{Q}$ of any converging subsequence of $Q^h$ 
belongs to $\mathcal{M}$. By (\ref{fhass}) we have: 
\begin{equation}\label{reza3}
\begin{split}
\frac{1}{e^h}J^h(y^h) - &\frac{1}{e^h} I^h(y^h)
= \frac{1}{e^h}  \left(m^h - \frac{1}{h}\int_{S^h} f^h u^h\right)\\
& = \frac{h}{\sqrt{e^h}} \left(m - \int_{S}\fint_{-h/2}^{h/2} f(x)\cdot Q^h 
\tilde u^h ~\mbox{d}t\mbox{d}x\right)\\
& = - \frac{1}{h e^h} \int_{S^h} f^h\cdot Q^h (\tilde u^h - id)
+ \frac{h}{\sqrt{e^h}}\left(m - \int_S f\cdot Q^hx~\mbox{d}x\right).
\end{split}
\end{equation} 
The first term above is bounded, as it in fact converges to $-\int_S f\cdot \bar{Q}V$,
by Lemma \ref{Gammafh}. The quantity is brackets in the second term converges to
$m - \int_S f\cdot \bar{Q}x$.
Therefore, if $\bar{Q}\not\in\mathcal{M}$, this last quantity is uniformly positive, 
and the second term above converges to $+\infty$ (as $h/\sqrt{e^h}\to\infty$).
We observe that, in this situation, $\frac{1}{e^h} J^h(y^h)$ must converge 
to $+\infty$, contradicting (i) and thus proving that $\bar{Q}\in\mathcal{M}$.

In view of (\ref{reza2}), (\ref{reza3}) also implies:
$$ \liminf_{h\to 0} \frac1 {e^h} J^h(y^h) 
\geq \liminf_{h\to 0} \frac1{e^h} I^h(y^h) - \int_S f\cdot \bar{Q}V 
\geq J(V, B_{tan},\bar{Q}).$$ 
The fact that the limit $(V, B_{tan},\bar{Q})$ minimizes the functional $J$ is 
now a standard consequence of the above inequality. 
Indeed, if:
$$J(\hat V, \hat B_{tan},\hat{Q}) \leq J(V, B_{tan},\bar{Q}) - \epsilon$$
for some $\hat V\in W^{2,2}(S,\mathbb{R}^3)$ satisfying (\ref{Vass1})
(\ref{Vass2}), some $\hat B_{tan}\in\mathcal{B}$, 
$\hat{Q}\in\mathcal{M}$ and $\epsilon>0$, then
for a related recovery sequence $\hat y^h$ there would be:
$$\lim_{h\to 0} \frac{1}{e^h} J^h(\hat{Q}\hat y^h) 
= J(\hat V, \hat B_{tan},\hat{Q}) 
\leq J(V, B_{tan},\bar{Q}) - \epsilon \leq \liminf_{h\to 0}
\frac{1}{e^h} J^h(y^h) - \epsilon,$$
which contradicts (\ref{min_seq}). 

Finally, (iii) follows exactly as (i) and (ii). 
\endproof

\begin{remark}\label{remiforces}
{\bf 1.}  A dead load (versus a ``live load'') is any external force which only depends 
on the reference configuration point, and not on the deformation itself. 
An important feature of dead loads, discussed first in \cite{Mulunpub}, is the following. 
If the load is in a certain average sense compressive, it is advantageous 
for the body to perform a large rotation rather than undergo a compression. 
Our analysis identifies ${\mathcal M}$ as the set of candidates for such rotations,
which are expected to minimize the total energy 
$J^h$ among all rigid motions of the body. 

This phenomenon may happen even if the average torque of the force is zero:
\begin{equation}\label{torque}
\int_S f(x)\times x~\mbox{d}x = 0.
\end{equation}
Note that vanishing of the average torque is necessary  for 
$\mbox{Id} \in {\mathcal M}$, since (\ref{torque}) can be written as:
$\int_S f\cdot Fx =0$ for all $F\in so(3)$.
However, it is not sufficient, and if $\mbox{Id}\not\in \mathcal{M}$ then 
we observe that the minimizers of $J^h$ will not be close to $\mbox{Id}$.
In general, the body chooses an infinitesimal isometric displacement $V$ and 
a rotation $\bar{Q}\in\mathcal{M}$ which is energetically
advantageous in response to the force $f$. That is, those 
rotations which allow for a better alignment of infinitesimal isometries 
with the direction of the dead load, are preferred.

\medskip

{\bf 2.} The assumption on the sequence of forces $f^h$ can of course be weakened.
For example, consider $f^h(x+t\vec n)= \mbox{det}(\mbox{Id}+t\Pi(x))^{-1}
f^h(x)$ and let $\frac{1}{h\sqrt{e^h}}f^h$ converge weakly in $L^2(S)$ to some
$f\in L^2(S,\mathbb{R}^3)$. In this situation, one needs to enforce extra assumptions 
on the asymptotic behavior of the maximizers 
of the linear functions  $SO(3)\ni Q\mapsto \int_S f^h\cdot Qx~\mbox{d}x$ 
with respect to ${\mathcal M}$, to exclude certain degenerate 
cases. The analysis is as in the proof of Theorem \ref{thmaincinque} and
we leave the details to a courageous reader.

\medskip

{\bf 3.} The lower bound on $J$ and existence of its minimizers 
can be proved independently, and under the following 
weaker assumptions:
\begin{equation*}
\int_S f= 0 \quad \mbox{ and } \quad \int_S f(x)\cdot \bar{Q}Fx~\mbox{d}x=0
\quad \forall\bar{Q}\in\mathcal{M} \quad \forall F\in so(3),
\end{equation*}
which can be seen as the linearization of (\ref{setM}), although
it makes perfect sense for any closed nonempty subset $\mathcal{M}\subset SO(3)$.
Indeed, the second equality above follows by differentiating 
the expression $\int_S f(x) \cdot Qx ~\mbox{d} x$  at $\bar{Q}\in\mathcal{M}$ 
and using that $so(3)$ is the tangent space to $SO(3)$ at $\mbox{Id}$.
We present the proof of coercivity and the attainment of the minimum
by $J$ and $\tilde J$ under this condition, for arbitrary $\mathcal{M}$,
in Appendix C.
\end{remark}

\section{Appendix A - an approximation theorem on surfaces}

For a given vector mapping $u\in W^{1,2}(\Omega,\mathbb{R}^n)$ defined on 
an open subset $\Omega\subset \mathbb{R}^n$, denote:
$$E(u,\Omega) = \int_\Omega \mbox{dist}^2(\nabla u(x), SO(3))~\mbox{d}x.$$ 

\begin{lemma}\label{approx}
Let $u\in W^{1,2}(S^h,\mathbb{R}^n)$
and assume that $h^{-3} E(u, S^h)$ is sufficiently small. 
Then there exists a matrix field $R\in W^{1,2}(S,\mathbb{R}^{3\times 3})$, such that:
$$R(x) \in SO(3)\qquad \forall x\in S,$$
and a matrix $Q\in SO(3)$ with the following properties:
\begin{itemize}
\item[(i)] $\|\nabla u - R\pi\|_{L^2(S^h)} \leq CE(u, S^h)^{1/2},$
\item[(ii)] $\|\nabla R\|_{L^2(S)} \leq Ch^{-3/2} E(u, S^h)^{1/2},$
\item[(iii)] $\|Q^TR - \mathrm{Id}\|_{L^p(S)} \leq Ch^{-3/2} E(u, S^h)^{1/2},$ 
for all $p\in [1,\infty)$,
\end{itemize}
where $C$ is independent of $u$ and $h$ (but may depend on $p$).
\end{lemma}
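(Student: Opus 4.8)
The plan is to follow the thin--domain refinement of the geometric rigidity estimate of Friesecke, James and M\"uller \cite{FJMgeo}, carried out for plates in \cite{FJMhier}, and transplant the partition argument to the curved shell $S^h$. First I would cover $S$ by finitely many bi-Lipschitz coordinate charts and, writing $S^h$ as the image of $\{(x,t):x\in S,\ |t|<h/2\}$ under $(x,t)\mapsto x+t\vec n(x)$, decompose $S^h$ into a family $\{U_j\}$ of curvilinear ``cuboids'', each of which, after dilation by $h^{-1}$, is comparable to a fixed Lipschitz domain of unit size, with uniformly bounded overlap, and such that every union $U_j\cup U_k$ of two neighbouring pieces is still uniformly Lipschitz after that dilation (the pieces abutting $\partial S$ are where the Lipschitz regularity of the boundary curves enters). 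On each rescaled piece I would apply \cite{FJMgeo} to get a rotation $R_j\in SO(3)$ with $\int_{U_j}|\nabla u-R_j|^2\le C\,E(u,U_j)$, the constant being dilation invariant; chaining this over the overlaps $U_j\cup U_k$ and using $|U_j|\sim h^3$ then gives $|R_j-R_k|^2\le C h^{-3}\bigl(E(u,U_j)+E(u,U_k)\bigr)$ for adjacent $j,k$.

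Next I would glue the $R_j$ into one field. With a partition of unity $\{\theta_j\}$ subordinate to $\{U_j\}$ satisfying $|\nabla\theta_j|\le Ch^{-1}$, set $\tilde R=\sum_j\theta_j R_j$; since $\sum_j\nabla\theta_j=0$, on each $U_k$ one has $|\nabla\tilde R|\le Ch^{-1}\max_{j\sim k}|R_j-R_k|$, and summing over $k$ this yields $\|\nabla\tilde R\|_{L^2(S^h)}\le Ch^{-1}E(u,S^h)^{1/2}$, together with $\|\nabla u-\tilde R\|_{L^2(S^h)}\le C\,E(u,S^h)^{1/2}$ by a similar chaining. Since $\tilde R$ is a convex combination of rotations attached to nearby cuboids, $\mathrm{dist}(\tilde R,SO(3))\le C(h^{-3}E(u,S^h))^{1/2}$ on $S^h$, so when $h^{-3}E(u,S^h)$ is small enough $\tilde R$ stays inside the tubular neighbourhood of $SO(3)$ on which the nearest--point projection $\mathbb P$ is smooth. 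I would then define $R:S\to SO(3)$ by $R(x)=\mathbb P\bigl(\fint_{-h/2}^{h/2}\tilde R(x+t\vec n)\,\mathrm{d}t\bigr)$; the Lipschitz bound on $\mathbb P$ and the $t$-averaging over a slab of thickness $h$ give $\|\nabla R\|_{L^2(S)}\le C\bigl(h^{-1}\int_{S^h}|\nabla\tilde R|^2\bigr)^{1/2}\le Ch^{-3/2}E(u,S^h)^{1/2}$, which is (ii); moreover $R\circ\pi$ is $t$-independent and differs from $\tilde R$ on each $U_k$ by at most $C\max_{j\sim k}|R_j-R_k|$ pointwise, so $\|\tilde R-R\pi\|_{L^2(S^h)}\le C\,E(u,S^h)^{1/2}$, and combined with the bound on $\|\nabla u-\tilde R\|_{L^2(S^h)}$ this gives (i). Finally, for (iii) I would take $Q=\mathbb P\bigl(\fint_S R\bigr)$: since $S$ is connected with Lipschitz boundary, the two--dimensional embedding $W^{1,2}(S)\hookrightarrow L^p(S)$ (valid for every finite $p$, with constant $C_p$) together with Poincar\'e's inequality gives $\|R-\fint_S R\|_{L^p(S)}\le C_p\|\nabla R\|_{L^2(S)}$, and $\mathrm{dist}(\fint_S R,SO(3))\le\fint_S|R-\fint_S R|$ is controlled the same way, so $\|Q^TR-\mathrm{Id}\|_{L^p(S)}=\|R-Q\|_{L^p(S)}\le C_p h^{-3/2}E(u,S^h)^{1/2}$.

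The main obstacle is Step 1: building the cover of $S^h$ by $h$-sized pieces on which the rigidity constant of \cite{FJMgeo} is uniform in $h$, and on whose pairwise unions it remains uniform, so that neither the curvature of $S$ nor the corners of the Lipschitz curves forming $\partial S$ let the constants degenerate as $h\to0$. Everything afterwards is bookkeeping --- the gluing, the pointwise distance estimates for convex combinations of rotations, and the passage from a matrix field on $S^h$ to one on $S$ by averaging in $t$ --- and is the plate construction of \cite{FJMhier} transported through the chart maps composed with $(x,t)\mapsto x+t\vec n(x)$, which distort lengths only by factors bounded above and below uniformly in $h$ under our assumption on $h_0$.
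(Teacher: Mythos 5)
Your argument is correct and rests on the same pillars as the paper's proof: uniform geometric rigidity (Theorem \ref{approx_basic}) applied on $h$-sized pieces of $S^h$, a gluing of the resulting local rotations into a $W^{1,2}$ matrix field lying close to $SO(3)$, nearest-point projection onto $SO(3)$, and the two-dimensional Sobolev--Poincar\'e embedding $W^{1,2}(S)\hookrightarrow L^p(S)$ for part (iii). The one genuine difference is the gluing mechanism. You take a discrete cover $\{U_j\}$, a constant rotation $R_j$ on each piece, a partition of unity with $|\nabla\theta_j|\le Ch^{-1}$, and then an extra average in $t$ to descend from a field on $S^h$ to a field on $S$; this is the plate construction of \cite{FJMgeo}, \cite{FJMhier} transported through the charts, as you say. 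The paper instead defines $\tilde R(x)=\int_{S^h}\eta_x(z)\nabla u(z)\,\mathrm{d}z$ directly for every $x\in S$, with a mollifying kernel $\eta_x$ supported in the cylinder $\pi^{-1}(B(x,h)\cap S)\cap S^h$ and satisfying $\|\eta_x\|_{L^\infty}\le Ch^{-3}$, $\|\nabla_x\eta_x\|_{L^\infty}\le Ch^{-4}$. This buys two simplifications: the field lives on $S$ from the start, so no fiber averaging is needed, and the gradient bound (ii) follows from a single Cauchy--Schwarz estimate $|\nabla\tilde R(x)|^2=\big|\int(\nabla_x\eta_x)(\nabla u-R_{x,h})\big|^2\le Ch^{-5}E(u,B_{x,h})$ rather than from chaining adjacent rotations (the comparison of nearby local rotations is still used, but only to control $|\tilde R(x'')-\tilde R(x)|$ for $x''$ near $x$). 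What your route buys is that the only analytic input beyond Theorem \ref{approx_basic} is the partition of unity, at the price of verifying that the unions $U_j\cup U_k$ (or intersections of volume comparable to $h^3$) are uniformly bilipschitz after dilation by $h^{-1}$ --- which is the same covering issue the paper faces for the cylinders $B_{x,h}$ and the enlarged cylinders $2B_{x,h}$, so neither approach escapes the Step-1 obstacle you correctly flag as the crux.
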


The proof of Lemma \ref{approx} uses the following nonlinear quantitative rigidity estimate
by Friesecke, James and M\"{u}ller:
\begin{theorem}\label{approx_basic}\cite{FJMgeo}
Let $\Omega\subset \mathbb{R}^n$ be an open, bounded domain
with Lipschitz boundary.
Then, for every $u\in W^{1,2}(\Omega,\mathbb{R}^n)$ one has:
$$\min_{R\in SO(n)} \int_\Omega |\nabla u(x) - R|^2 ~\mathrm{d}x \leq
C E(u,\Omega),$$
where the constant $C$ depends only on $\Omega$.  In particular, $C$ is
invariant under dilations of $\Omega$, and it is also 
uniform for the uniform bilipschitz images of a unit ball in $\mathbb{R}^n$.
\end{theorem}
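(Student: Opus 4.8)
The plan is to prove the estimate in three stages: a reduction (by scaling and by covering) to the case of the unit cube; a Lipschitz truncation reducing to maps with bounded gradient; and the core estimate for Lipschitz maps, obtained from a ``small strain'' estimate propagated across scales. Every quantitative step should be set up to be stable under dilations and under bilipschitz deformations of the domain, which is what makes the final constant depend only on the bilipschitz character of $\Omega$.

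\emph{Reductions.} On the unit cube $Q=(0,1)^n$, if $E(u,Q)\ge|Q|$ one takes $R=\mathrm{Id}$, since $\int_Q|\nabla u-\mathrm{Id}|^2\le 2\int_Q|\nabla u|^2+2n|Q|\le C\bigl(|Q|+E(u,Q)\bigr)\le C\,E(u,Q)$ by $|\nabla u|\le\sqrt n+\mathrm{dist}(\nabla u,SO(n))$; so we may assume $E(u,Q)$ as small as desired. For a general Lipschitz domain --- in particular a bilipschitz image of a ball --- I would cover $\bar\Omega$ by finitely many sets $U_1,\dots,U_N$, each a bilipschitz image of $Q$ with uniformly bounded bilipschitz constants, arranged so that the graph with an edge whenever $U_i\cap U_j$ has nonempty interior is connected. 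Transporting the cube estimate through the charts gives $R_i\in SO(n)$ with $\int_{U_i}|\nabla u-R_i|^2\le C\,E(u,U_i)$; comparing two charts on $\mathrm{int}(U_i\cap U_j)$ gives $|R_i-R_j|^2\le C\bigl(E(u,U_i)+E(u,U_j)\bigr)$; telescoping along a path in the finite connected graph from any $i$ to a fixed $i_0$ gives $|R_i-R_{i_0}|^2\le C\,E(u,\Omega)$; and summing $\int_{U_i}|\nabla u-R_{i_0}|^2\le 2\int_{U_i}|\nabla u-R_i|^2+2|U_i|\,|R_i-R_{i_0}|^2$ over $i$ proves the theorem with $R=R_{i_0}$.

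\emph{Truncation and the Lipschitz case.} For $u$ on $Q$ with $E(u,Q)=:e$ small, the Lipschitz truncation lemma (in the style of Acerbi--Fusco, via the Hardy--Littlewood maximal function; cf.\ \cite{Liu50}) furnishes $v\in W^{1,\infty}(Q,\mathbb R^n)$ with $\|\nabla v\|_{L^\infty(Q)}\le C$, with $v=u$ off a set of measure $\le Ce$, and with $\int_Q|\nabla u-\nabla v|^2\le Ce$, hence also $\int_Q\mathrm{dist}^2(\nabla v,SO(n))\le Ce$; since $\min_R\int_Q|\nabla u-R|^2\le 2\min_R\int_Q|\nabla v-R|^2+2\int_Q|\nabla u-\nabla v|^2$, it suffices to treat the Lipschitz map $v$. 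For $v$ the estimate rests on the fact that $\nabla v$ is curl-free: via the Piola identity $\mathrm{div}\,\mathrm{cof}(\nabla v)=0$ together with $\mathrm{cof}(R)=R$ on $SO(n)$, writing $\nabla v=R(\mathrm{Id}+E)$ with $R\in SO(n)$ pointwise and $|E|=\mathrm{dist}(\nabla v,SO(n))$ exhibits the columns of $R$ as divergence-free up to terms of size $|E|$; combined with the manifold constraint $R\in SO(n)$ and a Caccioppoli/elliptic estimate, this yields, when $\|E\|_{L^\infty}$ is below a dimensional threshold $\delta_0$, the small-strain estimate $\min_{\bar R}\int_Q|\nabla v-\bar R|^2\le C\int_Q\mathrm{dist}^2(\nabla v,SO(n))$ --- the quantitative refinement of Liouville's rigidity ($\nabla v\in SO(n)$ a.e.\ $\Rightarrow\nabla v$ constant), with $\delta_0$ small keeping $\nabla v$ from winding around the connected manifold $SO(n)$. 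Since $\|E\|_{L^\infty}\le\delta_0$ is not free, one propagates this across scales: subdividing $Q$ dyadically, applying the (rescaled) small-strain estimate on subcubes where the rescaled strain has dropped below $\delta_0$, controlling the remaining subcubes through their total measure $\le C\delta_0^{-2}e$, and patching the resulting local rotations by the telescoping of the reduction step, one arrives at a single $R$ with $\int_Q|\nabla v-R|^2\le C\int_Q\mathrm{dist}^2(\nabla v,SO(n))$.

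\emph{The main obstacle.} The hard part --- and the genuine contribution of Friesecke--James--M\"uller \cite{FJMgeo} --- is this last point: controlling the winding of $\nabla v$ around $SO(n)$ quantitatively and organizing the multi-scale patching so that the constant does not degrade as the subdivision is refined, since a naive telescoping of rotations across a fine grid costs a factor growing with the number of subcubes. It is this iteration-over-scales, rather than the Lipschitz truncation or the one-scale Caccioppoli estimate, that delivers the clean \emph{linear} dependence on $E(u,\Omega)$ and the bilipschitz-uniform constant asserted in the theorem.
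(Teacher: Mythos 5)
A point of reference first: the paper offers no proof of this statement --- Theorem \ref{approx_basic} is the geometric rigidity theorem quoted from \cite{FJMgeo} and used as a black box in the proof of Lemma \ref{approx} --- so your argument has to stand on its own, and it has two genuine gaps. The first is the step ``transporting the cube estimate through the charts''. If $\Phi_i:Q\to U_i$ is only bilipschitz, then $\nabla(u\circ\Phi_i)=(\nabla u\circ\Phi_i)\,\nabla\Phi_i$, and $\mathrm{dist}\bigl((\nabla u\circ\Phi_i)\nabla\Phi_i,SO(n)\bigr)$ is not comparable to $\mathrm{dist}(\nabla u,SO(n))\circ\Phi_i$: already for $u=\mathrm{id}$ one has $E(u,U_i)=0$ while $E(u\circ\Phi_i,Q)$ is in general strictly positive, so the cube estimate applied to $u\circ\Phi_i$ yields no inequality of the form $\int_{U_i}|\nabla u-R_i|^2\le C\,E(u,U_i)$. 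Rigidity transforms well under rigid motions and dilations of the domain, not under bilipschitz changes of variables; the uniformity over bilipschitz images of a ball asserted in the theorem is obtained in \cite{FJMgeo} by re-running the proof --- covering the domain by genuine Euclidean cubes, near the boundary of size comparable to the distance to $\partial\Omega$, with a chaining and weighted summation whose constants depend only on the bilipschitz character --- not by pulling the estimate back through charts.

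The second, decisive gap is that the core quantitative step is not proved but deferred: you introduce a ``small-strain estimate'' valid under an $L^\infty$ threshold $\delta_0$, propose a dyadic multi-scale patching, and then yourself attribute the key difficulty (that the constant does not degrade in the patching) to ``the genuine contribution of Friesecke--James--M\"uller''; at the crucial point the proposal cites the theorem it is meant to prove. It also misplaces where the difficulty lies: after the Lipschitz truncation no smallness of the strain is needed. With $\|\nabla v\|_{L^\infty}\le M$ one uses the Piola identity to write $\Delta v=\mathrm{div}\,(\nabla v-\mathrm{cof}\,\nabla v)$, the pointwise bound $|F-\mathrm{cof}\,F|\le C(M)\,\mathrm{dist}(F,SO(n))$ for $|F|\le M$ (the left-hand side is a polynomial vanishing on $SO(n)$), and comparison with the harmonic replacement $w$ to obtain $\|\nabla v-\nabla w\|_{L^2}^2\le C(M)\,E(v,Q)$; interior regularity of the harmonic $w$ then gives the linear interior estimate directly, with no smallness threshold, no control of ``winding'', and no iteration over scales. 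What remains --- and what your sketch does not address --- is the passage from the interior estimate to the estimate up to the boundary, which in \cite{FJMgeo} is carried out by the covering and chaining argument mentioned above; that, rather than any winding of $\nabla v$ around $SO(n)$, is where the Lipschitz geometry of $\Omega$ enters and where the real work lies.
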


\noindent {\bf Proof of Lemma \ref{approx}.}

\noindent {\bf 1.} 
For $x\in S$ define 'balls' in $S$ and the corresponding 'cylinders' in $S^h$:
$$D_{x,h} = B(x,h)\cap S, \qquad B_{x,h} = \pi^{-1}(D_{x,h})\cap S^h.$$
The main observation is that Theorem \ref{approx_basic} may be applied on each set $B_{x,h}$,
yielding matrices $R_{x,h}\in SO(3)$ such that:
\begin{equation}\label{A:zero}
\int_{B_{x,h}} |\nabla u(z) - R_{x,h}|^2~\mbox{d}z \leq C E(u, B_{x,h}),
\end{equation}
with uniform constant $C$ (independent of $x$ or $h$).

\medskip

{\bf 2.} 
Let $\vartheta\in \mathcal{C}_c^\infty ([0,1))$ be a nonnegative cut-off function,
equal to a nonzero constant in a neighborhood of $0$.
For each $x\in S$ define the function $ \eta_x:S^h\longrightarrow \mathbb{R}$:
$$\displaystyle\eta_x(z) = \frac{\vartheta(|\pi z - x|/h)}
{\int_{S^h}\vartheta(|\pi y - x|/h) ~\mbox{d}y}.$$
Then $\eta_x(z) = 0$ for $z\not\in B_{x,h}$ and:
\begin{equation}\label{A:eta_prop}
\int_{S^h} \eta_x(z)~\mbox{d}z = 1,\qquad \|\eta_x\|_{L^\infty}\leq {C}h^{-3},
\qquad \|\nabla_{x}\eta_x\|_{L^\infty}\leq {C}h^{-4}.
\end{equation}
The last inequalities follow from the lipschitzianity of $\partial S$.
In particular, the denominator function in the definition of $\eta_x$
has Lipschitz constant $Ch^2$, and hence:
$$\left\| \nabla_x \left(\int_{S^h}\vartheta(|\pi y - x|/h) ~\mbox{d}y\right)^{-1}\right\|_{L^\infty}
\leq {C} h^{-4}.$$
Consider the matrix field $\tilde R\in W^{1,2}(S,\mathbb{R}^{3\times 3})$:
$$\tilde R(x) = \int_{S^h} \eta_x(z) \nabla u(z)~\mbox{d}z.$$
By the first two statements in (\ref{A:eta_prop}) we obtain:
\begin{equation}\label{A:uno}
|\tilde R(x) - R_{x,h}|^2 = 
\left|\int_{S^h} \eta_x(z) (\nabla u(z)- R_{x,h})~\mbox{d}z\right|^2
\leq {C} h^{-3}E(u, B_{x,h}).
\end{equation}
Similarly:
\begin{equation}\label{A:due}
\begin{split}
|\nabla \tilde R(x)|^2 &= \left|\int_{S^h}(\nabla_x\eta_x)\nabla u\right|^2
= \left|\int_{S^h}(\nabla_x\eta_x)(\nabla u- R_{x,h})\right|^2\\
& \leq \int_{B_{x,h}}\left|\nabla_x\eta_x\right|^2\cdot
\int_{B_{x,h}}\left|\nabla u - R_{x,h}\right|^2 \leq {C}h^{-5}
E(u, B_{x,h}),
\end{split}
\end{equation}
and likewise, for any $x'\in D_{x,h}$:
\begin{equation}\label{A:due.5}
|\nabla \tilde R(x')|^2 \leq {C}h^{-5} E(u, 2B_{x,h})
\end{equation}
with $2B_{x,h} = \pi^{-1}(D_{x,2h})\cap S^h$. 
Therefore, in view of the lipschitzianity of $\partial S$ and by the fundamental theorem
of calculus:
\begin{equation}\label{A:tre}
|\tilde R(x'') - \tilde R(x)|^2 \leq {C}h^{-3}E(u, 2B_{x,h})\qquad \forall x''\in D_{x,h}.
\end{equation}
Combining (\ref{A:zero}) with (\ref{A:uno}) and (\ref{A:tre}) yields:
\begin{equation}\label{A:quattro}
\begin{split}
&\int_{B_{x,h}} |\nabla u(z)  - \tilde R \pi (z)|^2~\mbox{d}z \\
&\quad \leq  2\left( \int_{B_{x,h}} |\nabla u - R_{x,h}|^2 + \int_{B_{x,h}} |\tilde R(x)- R_{x,h}|^2
+ \int_{B_{x,h}} |\tilde R\pi - \tilde R(x)|^2 \right)\\
&\quad \leq C E(u, 2B_{x,h}).
\end{split}
\end{equation}
Now cover $S$ by $\{D_{x_i,h}\}_{i=1}^{N_h}$ so that the covering number of 
the family $\{2B_{x_i,h}\}_{i=1}^{N_h}$ is independent of $h$. Summing the inequalities in 
(\ref{A:quattro}) over $i=1\ldots N$ proves:
\begin{equation}\label{A:cinque}
\int_{S^h} |\nabla u - \tilde R\pi|^2 \leq C E(u, S^h).
\end{equation}
Also, integrating (\ref{A:due.5}) over $D_{x_i, h}$ and summing over $i=1\ldots N$ gives:
\begin{equation}\label{A:sei}
\int_{S} |\nabla \tilde R|^2 \leq {C}h^{-3} E(u, S^h).
\end{equation}

\medskip

{\bf 3.} Notice that by (\ref{A:uno}), for every $x\in S$:
$$\mbox{dist}^2(\tilde R(x), SO(3))\leq {C}h^{-3} E(u,S^h).$$
Hence, if $E(u, S^h)/h^3$ is sufficiently small, we may define:
$$R(x) = \mathcal{P}_{SO(3)} (\tilde R(x))$$
where $\mathcal{P}_{SO(3)}$ is the orthogonal projection onto the compact manifold $SO(3)$. 
Clearly $R:S\longrightarrow SO(3)$ is a $W^{1,2}$ matrix field and since:
$$|R(x) - \tilde R(x)| = \mbox{dist }(\tilde R(x), SO(3)),$$
by (\ref{A:cinque}) we conclude that:
\begin{equation*}
\int_{S^h}|\nabla u - R\pi|^2 \leq
C\left( \int_{S^h}|\nabla u - \tilde R\pi|^2 
+ \int_{S^h}\mbox{dist}^2(\nabla u, SO(3))\right)
\leq C E(u, S^h),
\end{equation*}
which proves (i) in Lemma \ref{approx}. The bound (ii) is deduced directly from 
(\ref{A:sei}).

\medskip

{\bf 4.} To deduce (iii), define first the intermediate matrix $\tilde Q$
as the average of $R$ on $S$. Using the Sobolev and Poincar\'e inequalities,
together with (ii) we obtain, for every $p\geq 2$:
\begin{equation}\label{A:sette}
\left(\int_S |R-\tilde Q|^p\right)^{2/p} \leq 
C \|R-\tilde Q\|^2_{W^{1,2}(S)}\leq C\int_S |\nabla R|^2 \leq {C}h^{-3} E(u, S^h).
\end{equation} 
Now, take $Q\in SO(3)$ such that $|Q - \tilde Q| = \mbox{dist }(\tilde Q, SO(3))$.
As before, (\ref{A:sette}) remains true if $\tilde Q$ is replaced with $Q$.
Clearly, the same bound must also hold for $p\in [1,2)$, and so we conclude that:
$$\forall p\in [1,\infty) \qquad \|R - Q\|^2_{L^p(S)} \leq {C}h^{-3} E(u, S^h).$$
The above easily implies (iii).
\endproof

\section{Appendix B - the $\Gamma$-convergence setting}

We first recall the notion of $\Gamma$-convergence of a sequence of functionals 
$\mathcal{F}^h:X\longrightarrow \overline{\mathbb{R}}$, defined on a metric
space $X$. Namely, $\mathcal{F}^h$ $\Gamma$-converge, as $h\to 0$, to some 
$\mathcal{F}:X\longrightarrow \overline{\mathbb{R}}$ provided that 
the following two conditions hold:
\begin{itemize}
\item[(i)] For any converging sequence $\{x^h\}$ in $X$ one has:
\begin{equation}\label{Gamma1}
\mathcal{F}\left(\lim_{h\to 0} x^h\right) \leq \liminf_{h\to 0} \mathcal{F}^h(x^h).
\end{equation}
\item[(ii)] For every $x\in X$, there exists a sequence $\{x^h\}$ converging
to $x$, such that:
\begin{equation}\label{Gamma2}
\mathcal{F}(x) = \lim_{h\to 0} \mathcal{F}^h(x^h).
\end{equation}
\end{itemize}
When $X$ is only a topological space, the definition of $\Gamma$-convergence 
involves, naturally, systems of
neighborhoods rather than sequences.  However, when the functionals $\mathcal{F}^h$
are equi-coercive and $X$ is a reflexive Banach space equipped with 
weak topology, one can still use (i) and (ii) above (for weakly converging
sequences), as an equivalent version of this definition.
For details, we refer the reader to \cite{dalmaso}.

\medskip

\noindent {\bf Proof of Corollary \ref{thmainquattro}}.

\noindent We only prove (i), in the case when the product space in the domain of 
$\mathcal{F}$ is equipped with the strong topology.
The other statements follow the same.

To obtain (\ref{Gamma1}), we take a sequence of $W^{1,2}(S^{h_0})$ vector mappings
$\{y^h\}$ such that, writing $B^h_{tan} = \frac{1}{h} \mbox{sym }\nabla V^h[y^h]$,
the sequence $\{\mathcal{F}^h(y^h, V^h[y^h], B^h_{tan})\}$
is bounded, and such that $y^h$, $V^h[y^h]$ and $B_{tan}^h$ 
converge to some $y$, $V$ and $B_{tan}$ (in $W^{1,2}(S^{h_0})$, 
$W^{1,2}(S)$ and $L^2(S)$ respectively).
By Theorem \ref{thmainuno} we obtain a sequence of normalized
deformations $\tilde y^h = (Q^h)^T y^h - c^h$, converging to $\pi$.
Moreover, $V^h[\tilde y^h]$ and $\frac{1}{h}\mbox{sym }\nabla V^h[\tilde y^h]$
converge to $\tilde V$ and weakly to $\tilde B_{tan}$, respectively.
Notice now that:
$$|Q^h - \mbox{Id}|= C h^{-1}\sqrt{e^h} \big\|\nabla V^h[\tilde y^h]
- (Q^h)^T \nabla V^h[y^h]\big\|_{L^2(S)} \leq Ch^{-1}\sqrt{e^h}.$$ 
In particular, Lemma \ref{appr} remains true if we put $Q^h = \mbox{Id}$,
for all $h$.  Consequently, all the assertions of Theorem \ref{thmainuno}
still hold for $\tilde y^h = y^h - c^h$ (possibly after 
modifying the constants $c^h$).

Now, $V^h[y^h] - V^h[\tilde y^h] = h/\sqrt{e^h} c^h$ is bounded, so
$c^h$ converge to $0$, as $h\to 0$. On the other hand
$c^h = y^h - \tilde y^h$ converge to $y-\pi$.  Hence $y=\pi$.
Moreover $\nabla V^h[\tilde y^h] = \nabla V^h[y^h]$, so $\nabla V=\nabla \tilde V$
and $B_{tan} = \tilde B_{tan}$.  
By Theorem \ref{thmainuno} (iv) we conclude that:
$$\mathcal{F}(y,V,B_{tan}) \leq \liminf_{h\to 0} \mathcal{F}^h(y^h, V^h, B^h_{tan})$$
which proves (\ref{Gamma1}).

The second requirement for $\Gamma$-convergence (\ref{Gamma2}) follows 
directly from Theorem \ref{thmaindue}, in view of (\ref{Gamma1}).
\endproof

We remark that in presence of external forces, the results of Theorem \ref{thmaincinque}
can also be formulated in the language of $\Gamma$-convergence, similarly as above.

\section{Appendix C - on coercivity of the generalized von K\'arm\'an functionals 
$J$ and $\tilde J$}

In this section, we consider the functionals:
\begin{equation*}
\begin{split}
J(V, B_{tan},\bar{Q}) =& \frac{1}{2} 
\int_S \mathcal{Q}_2\left(x,B_{tan} - \frac{\kappa}{2} (A^2)_{tan}\right)\\
& + \frac{1}{24} \int_S \mathcal{Q}_2\left(x,(\nabla(A\vec n) - A\Pi)_{tan}\right)
- \int_S f\cdot \bar{Q}V,\\
\tilde J(V,\bar{Q}) =& \frac{1}{24} \int_S \mathcal{Q}_2\left(x,(\nabla(A\vec n) - A\Pi)_{tan}\right)
- \int_S f\cdot\bar{Q} V,
\end{split}
\end{equation*}
defined for infinitesimal isometries $V$, matrix fields $B_{tan}\in\mathcal{B}$
and rotations $\bar{Q}\in\mathcal{M}$, where $\mathcal{M}$ is an arbitrary
closed and nonempty subset of $SO(3)$.
We prove that $J$ and $\tilde J$ attain their finite lower bounds under the 
following assumptions on $f\in L^2(S,\mathbb{R}^3)$:
\begin{equation}\label{sette.cinque}
\int_S f = 0 \quad \mbox{ and } \quad 
\int_S f(x)\cdot \bar{Q}Fx~\mbox{d}x = 0 \quad\forall \bar{Q}\in\mathcal{M}
\quad\forall F\in so(3).
\end{equation}
As mentioned in Remark \ref{remiforces}, the second condition above is a 
consequence and linearization of (\ref{setM}), with $\mathcal{M}$ defined by
that formula.

\begin{lemma}\label{Jcoercivity}
Assume that $S$ is of class $\mathcal{C}^{2,1}$.  Then for every 
$V\in W^{2,2} (S,\mathbb{R}^3)$ satisfying (\ref{Vass1}) and (\ref{Vass2})
there exist $D\in so(3)$ and $d\in\mathbb{R}^3$, so that:
$$\|V - (Dx+d)\|^2_{W^{2,2}(S)} \leq C 
\int_S \left|(\nabla(A\vec n) - A\Pi)_{tan}\right|^2.$$
\end{lemma}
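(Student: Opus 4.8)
The plan is to read the claimed inequality as a Korn--Poincar\'e type coercivity estimate for the linearized bending functional on the Hilbert space of $W^{2,2}$ infinitesimal isometries of $S$, whose kernel is exactly the six-dimensional space $\mathcal{K}=\{Dx+d;\ D\in so(3),\ d\in\mathbb{R}^3\}$ of traces of infinitesimal rigid motions. Throughout, for an infinitesimal isometry $V$ write $A$ for the skew field it determines through $\partial_\tau V=A\tau$ ($\tau\in T_xS$) and the second line of (\ref{diffic}), and set $\Theta(V)=(\nabla(A\vec n)-A\Pi)_{tan}$. Since $A$ depends linearly on $V$, the operator $V\mapsto\Theta(V)$ is linear and bounded from the (closed) subspace of infinitesimal isometries in $W^{2,2}(S,\mathbb{R}^3)$ into $L^2(S)$, once $S\in\mathcal{C}^{2,1}$ (so that $\Pi$ is Lipschitz). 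A constant infinitesimal isometry $Dx+d$ has $A\equiv D$, hence $\Theta(Dx+d)=(D\Pi-D\Pi)_{tan}=0$; thus $\mathcal{K}\subset\ker\Theta$, and the assertion is precisely the coercivity of $V\mapsto\|\Theta(V)\|_{L^2}$ modulo $\mathcal{K}$.

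First I would prove the a priori estimate: for every infinitesimal isometry $V\in W^{2,2}(S,\mathbb{R}^3)$,
\begin{equation*}
\|V\|_{W^{2,2}(S)}\ \leq\ C\big(\|\Theta(V)\|_{L^2(S)}+\|V\|_{W^{1,2}(S)}\big).
\end{equation*}
Split $V=V_{tan}+\phi\vec n$ with $\phi=V\vec n$. The tangential part obeys, by the regularity bound (\ref{splitting}) of Remark \ref{rem3.6} (which requires precisely $\mathcal{C}^{2,1}$ regularity), $\|V_{tan}\|_{W^{2,2}(S)}\le C(\|V_{tan}\|_{L^2(S)}+\|\phi\|_{W^{1,2}(S)})$. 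For the normal component, substitute $A\vec n=\Pi V_{tan}-\nabla_{tan}\phi$ from (\ref{diffic}) into $\Theta(V)$; this yields $\Theta(V)=-\nabla^2_{tan}\phi+(\nabla(\Pi V_{tan}))_{tan}-(A\Pi)_{tan}$, where $\nabla^2_{tan}\phi$ is the second tangential gradient of $\phi$. Since $\Pi$ is Lipschitz and $\|A\|_{L^2}\le C\|V\|_{W^{1,2}}$ (the tangential columns of $A$ are $\nabla V$, and $A\vec n$ is controlled by (\ref{diffic})), the last two terms are bounded in $L^2$ by $C\|V\|_{W^{1,2}}$; hence $\|\nabla^2_{tan}\phi\|_{L^2}\le\|\Theta(V)\|_{L^2}+C\|V\|_{W^{1,2}}$, and a local coordinate computation (no boundary condition on $\phi$ is needed) upgrades this to $\|\phi\|_{W^{2,2}(S)}\le C(\|\Theta(V)\|_{L^2}+\|V\|_{W^{1,2}})$. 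Combining the two pieces gives the estimate.

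Next I would identify the kernel: $\Theta(V)=0$ forces $V\in\mathcal{K}$. By Remark \ref{rem4.3}, $\Theta(V)\tau=(\partial_\tau A)\vec n$, so $(\partial_\tau A)\vec n=0$ for all $\tau\in T_xS$; a skew matrix annihilating $\vec n$ is a multiple of $[\vec n]_\times$, whence $\partial_\tau A=\lambda(\tau)\,[\vec n]_\times$. In a local chart, differentiating $\partial_\tau V=A\tau$ taken along the coordinate fields $\partial_1 r,\partial_2 r$ and using the symmetry in $i,j$ of $\partial_i\partial_j V$ and of $\partial_i\partial_j r$, one gets $\lambda_i\,(\vec n\times\partial_j r)=\lambda_j\,(\vec n\times\partial_i r)$; as $\vec n\times\partial_1 r$ and $\vec n\times\partial_2 r$ are linearly independent, $\lambda\equiv0$. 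Thus $\nabla A=0$ a.e., so $A$ is a constant $D\in so(3)$ ($S$ connected), and then $V-Dx$ has vanishing tangential gradient, hence is a constant $d$; so $V=Dx+d$. This is the classical linearized rigidity of surfaces, and it shows $\ker\Theta=\mathcal{K}$, in particular finite dimensional and closed.

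Finally I would close with the standard compactness argument. If the Lemma failed, there would be infinitesimal isometries $V_n$ with $V_n\perp\mathcal{K}$ in $W^{2,2}(S)$, $\|V_n\|_{W^{2,2}(S)}=1$, and $\|\Theta(V_n)\|_{L^2(S)}\to0$. Passing to a subsequence, $V_n\rightharpoonup V$ in $W^{2,2}$ and $V_n\to V$ in $W^{1,2}$; applying the a priori estimate to the (again infinitesimal isometric) differences $V_n-V_m$ shows $\{V_n\}$ is Cauchy in $W^{2,2}$, so $V_n\to V$ strongly, $\|V\|_{W^{2,2}}=1$, $V\perp\mathcal{K}$ and $\Theta(V)=0$; by the previous step $V\in\mathcal{K}$, whence $V=0$, a contradiction. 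Unwinding the argument gives $\inf_{D\in so(3),\,d\in\mathbb{R}^3}\|V-(Dx+d)\|_{W^{2,2}(S)}\le C\|\Theta(V)\|_{L^2(S)}$, which is the claim. The main obstacle is the a priori estimate: one must check that, after inserting $A\vec n$ from (\ref{diffic}), $\Theta(V)$ genuinely controls the full second tangential gradient of the normal component $V\vec n$, with all error terms of first order (so that the $W^{1,2}$ remainder is compactly embedded, as needed in the final argument) — here the $\mathcal{C}^{2,1}$ hypothesis and the splitting estimate (\ref{splitting}) enter essentially; the kernel identification, by contrast, is classical.
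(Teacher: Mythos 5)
Your proposal is correct and follows essentially the same route as the paper: identify the kernel of $V\mapsto(\nabla(A\vec n)-A\Pi)_{tan}$ as the infinitesimal rigid motions via the skew structure of $\partial_\tau A$ (the paper phrases this through the axial vector $c$ of $A$), then run a compactness/contradiction argument in which the tangential part is controlled by the splitting estimate (\ref{splitting}) and the normal part by the identity expressing $(\nabla(A\vec n)-A\Pi)_{tan}$ as $-\nabla^2(V\vec n)$ plus first-order terms. The only difference is organizational (you isolate the a priori estimate and apply it to differences $V_n-V_m$, while the paper applies the same estimate directly to the normalized contradiction sequence), which does not change the substance.
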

\begin{proof}
{\bf 1.} We first prove that $\int_S \left|(\nabla(A\vec n) - A\Pi)_{tan}\right|^2 = 0$
implies for a $W^{2,2}$ infinitesimal isometry $V$ to have the form $V(x)=Dx+d$,
with $D\in so(3)$ and $d\in\mathbb{R}^3$.

To see this, let $c\in W^{1,2}(S,\mathbb{R}^3)$ be such that:
$$A(x)\tau = c(x)\times \tau \qquad \forall x\in S \quad \forall \tau\in T_x S.$$
Since $A$ represents a gradient, it follows
that $\partial_\tau c\times \eta =\partial_\eta c\times \tau$ 
for all $\tau,\eta\in T_x S$.
In particular, for any $\tau$ and $\eta$ such that $\tau\times\eta = \vec n$,
one has:
$$(\partial_\tau c)\cdot \vec n 
= - (\partial_\tau c\times \eta)\cdot \tau 
= - (\partial_\eta c\times \tau)\cdot \tau = 0.$$
On the other hand:
$$0 = \big(\partial_\tau (A\vec n) - A\Pi\tau\big)_{tan} 
= \big(\partial_\tau (c\times \vec n) - A\Pi\tau\big)_{tan} 
= (\partial_\tau c)\times\vec n.$$ 
Hence $\partial_\tau c = 0$ on $S$, which yields the claim.

\medskip

{\bf 2.} We prove the result. Arguing by contradiction, we assume that for a sequence of
infinitesimal isometries $V^h\in W^{2,2}(S,\mathbb{R}^3)$ there holds:
\begin{equation}\label{C.due}
\begin{split}
\mbox{dist}_{W^{2,2}(S)}&\Big(V^h, \{Dx+d; ~ D\in so(3),~ d\in\mathbb{R}^3\}\Big)=1\\
&\qquad\qquad \qquad\mbox{ and } \quad
\lim_{h\to 0} \int_S \left|(\nabla A^h)\vec n\right|^2 = 0.
\end{split}
\end{equation}
Since the second condition above involves only higher derivatives of $V^h$, we
may without loss of generality replace the first condition by:
\begin{equation}\label{C.tre}
\|V^h\|_{W^{2,2}(S)}=1 \quad \mbox{ and } \quad
\big\langle V^h, Dx+d\big\rangle_{W^{2,2}(S)} = 0 \quad 
\forall D\in so(3), ~ d\in \mathbb{R}^3.
\end{equation}
In particular, $V^h$ converges weakly in $W^{2,2}(S)$ (up to a subsequence, which we 
do not relabel) to some 
vector field $V$, still satisfying (\ref{Vass1}) and (\ref{Vass2}).
By (\ref{C.due}) and the weak lower semicontinuity of the $L^2$ norm,
we deduce that $\int_S\left|(\nabla A)\vec n\right|^2 = 0$.  Hence, in view of
the first part of the proof and the second condition in (\ref{C.tre}), it follows
that $V=0$ and so:
\begin{equation}\label{C.quattro}
\lim_{h\to 0} \|V^h\|_{W^{1,2}(S)}=0.
\end{equation}

By the estimate (\ref{splitting}) and (\ref{C.quattro}) we may deduce:
\begin{equation}\label{C.cinque}
\lim_{h\to 0}\|V^h_{tan}\|_{W^{2,2}(S)}=0,
\end{equation}
where $V^h_{tan} = V^h - (V^h\vec n)\vec n$.
Observe that:
\begin{equation*}
\begin{split}
\int_S\left|(\nabla A^h)\vec n\right|^2 
&= \int_S\Big|\nabla \left(\Pi V^h_{tan}-\nabla (V^h\vec n)\right) - A^h\Pi\Big|^2\\
&= \int_S\Big|\nabla^2 (V^h\vec n) + (A^h\Pi - \Pi A^h)_{tan} 
- (\nabla \Pi) V^h_{tan} + (V^h\vec n) \Pi\Big|^2.
\end{split}
\end{equation*}
Therefore:
\begin{equation*}
\|\nabla^2(V^h\vec n)\|_{L^2(S)}\leq C\Big(\|(\nabla A^h)\vec n\|_{L^2(S)}
+ \|V^h\|_{W^{1,2}(S)}\Big),
\end{equation*}
and in view of (\ref{C.quattro}) and the assumption (\ref{C.due}) we also get:
$$\lim_{h\to 0} \|V^h \vec n\|_{W^{2,2}(S)}=0.$$
The above together with (\ref{C.cinque}) contradicts (\ref{C.tre}) and proves the lemma.
\end{proof}

\begin{lemma}
Assume (\ref{sette.cinque}) and let $S$ be of class $\mathcal{C}^{2,1}$. 
Then the functionals $J$ and $\tilde J$, defined for $V\in W^{2,2}(S,\mathbb{R}^3)$
satisfying (\ref{Vass1}), (\ref{Vass2}), and $B_{tan}\in\mathcal{B}$,
$\bar{Q}\in\mathcal{M}$, are bounded from below and attain their infima.
\end{lemma}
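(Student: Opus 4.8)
The plan is to establish the two claims in the stated order, treating $J$ in detail and obtaining $\tilde J$ by discarding the first quadratic term and the variable $B_{tan}$.

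\textbf{Lower bound.} Fix $V\in W^{2,2}(S,\mathbb{R}^3)$ satisfying (\ref{Vass1}), (\ref{Vass2}) with matrix field $A$, and $\bar Q\in\mathcal{M}$. By Lemma \ref{Jcoercivity} choose $D\in so(3)$, $d\in\mathbb{R}^3$ and set $W:=V-(Dx+d)$, so that $\|W\|_{W^{2,2}(S)}^2\le C\int_S|(\nabla(A\vec n)-A\Pi)_{tan}|^2\le C\int_S\mathcal{Q}_2(x,(\nabla(A\vec n)-A\Pi)_{tan})=24C\,\tilde I(V)$, using that $\mathcal{Q}_2(x,\cdot)$ is uniformly positive definite on the compact set $\bar S$. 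The role of (\ref{sette.cinque}) is that $\int_S f\cdot\bar Q(Dx+d)=0$ (the term linear in $x$ vanishes by the second relation in (\ref{sette.cinque}) with $F=D$, the constant term by $\int_S f=0$), so $\int_S f\cdot\bar Q V=\int_S f\cdot\bar Q W$ and hence $|\int_S f\cdot\bar Q V|\le\|f\|_{L^2(S)}\|W\|_{W^{2,2}(S)}\le C\|f\|_{L^2(S)}\,\tilde I(V)^{1/2}$. Since both quadratic terms of $J$ are nonnegative, $J(V,B_{tan},\bar Q)\ge\tilde I(V)-C\|f\|_{L^2(S)}\tilde I(V)^{1/2}$; minimizing the right-hand side over $\tilde I(V)^{1/2}\in[0,\infty)$ (complete the square) gives a bound $\ge -C'$ independent of $V$, $B_{tan}$, $\bar Q$, and the same estimate bounds $\tilde J$ from below.

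\textbf{Gauge-fixing a minimizing sequence.} Let $(V^h,B^h_{tan},\bar Q^h)$ be a minimizing sequence for $J$. The delicate point is that $J$ is invariant under the noncompact family of substitutions $V\mapsto V-(Dx+d)$, so $V^h$ need not be bounded in $W^{2,2}$; I would normalize it away using Lemma \ref{Jcoercivity}. The lower-bound computation shows $\tilde I(V^h)$ and $\int_S\mathcal{Q}_2(x,B^h_{tan}-\frac{\kappa}{2}((A^h)^2)_{tan})$ are bounded. Write $V^h=(D^hx+d^h)+W^h$ with $\|W^h\|_{W^{2,2}(S)}\le C$, and let $\tilde A^h=A^h-D^h$ be the matrix field of $W^h$ in the sense of Remark \ref{rem3.6}. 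Now observe: (a) since $D^h$ is constant and skew, $\nabla(D^h\vec n)-D^h\Pi=0$, whence $\nabla(A^h\vec n)-A^h\Pi=\nabla(\tilde A^h\vec n)-\tilde A^h\Pi$ and the second quadratic term of $J$ equals $\tilde I(W^h)$; (b) $(\tilde A^hD^h+D^h\tilde A^h)_{tan}=2\,\mathrm{sym}\,\nabla(D^hW^h)$ and $((D^h)^2)_{tan}=\mathrm{sym}\,\nabla w$ with $w(x)=(D^h)^2x$, so both lie in $\mathcal{B}$, and since $\mathcal{B}$ is a linear subspace, $\tilde B^h_{tan}:=B^h_{tan}-\frac{\kappa}{2}(\tilde A^hD^h+D^h\tilde A^h+(D^h)^2)_{tan}\in\mathcal{B}$ with $B^h_{tan}-\frac{\kappa}{2}((A^h)^2)_{tan}=\tilde B^h_{tan}-\frac{\kappa}{2}((\tilde A^h)^2)_{tan}$; (c) $\int_S f\cdot\bar Q^hV^h=\int_S f\cdot\bar Q^hW^h$ as above. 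Consequently $J(V^h,B^h_{tan},\bar Q^h)=J(W^h,\tilde B^h_{tan},\bar Q^h)$, and the normalized sequence is bounded: $\|W^h\|_{W^{2,2}(S)}\le C$, $\|\tilde B^h_{tan}\|_{L^2(S)}\le C$ (using $\|(\tilde A^h)^2\|_{L^2}\le C\|\tilde A^h\|_{W^{1,2}}^2\le C$ by Sobolev embedding), and $\bar Q^h\in\mathcal{M}$ is precompact.

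\textbf{Passage to the limit.} I would extract a subsequence with $W^h\rightharpoonup W$ in $W^{2,2}(S)$ (hence $W^h\to W$ in $W^{1,2}$, $\tilde A^h\to\tilde A$ in every $L^p(S)$ with $p<\infty$, and $\nabla\tilde A^h\rightharpoonup\nabla\tilde A$ in $L^2$; the $\mathcal{C}^{2,1}$ regularity of $S$ is used here to control the geometric coefficients), $\tilde B^h_{tan}\rightharpoonup B_{tan}$ in $L^2(S)$, and $\bar Q^h\to\bar Q$. Then $W$ satisfies (\ref{Vass1}), (\ref{Vass2}); $B_{tan}\in\mathcal{B}$ since $\mathcal{B}$ is a closed subspace of $L^2(S)$; and $\bar Q\in\mathcal{M}$ since $\mathcal{M}$ is closed. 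Moreover $(\nabla(A^h\vec n)-A^h\Pi)_{tan}\rightharpoonup(\nabla(\tilde A\vec n)-\tilde A\Pi)_{tan}$ and $\tilde B^h_{tan}-\frac{\kappa}{2}((\tilde A^h)^2)_{tan}\rightharpoonup B_{tan}-\frac{\kappa}{2}(\tilde A^2)_{tan}$ weakly in $L^2$ (the $(\tilde A^h)^2$ term converging strongly), so by convexity and nonnegativity of $F\mapsto\int_S\mathcal{Q}_2(x,F_{tan})$ the two quadratic terms are weakly lower semicontinuous, while $\int_S f\cdot\bar Q^hW^h\to\int_S f\cdot\bar QW$ since $W^h\to W$ strongly in $L^2$ and $\bar Q^h\to\bar Q$. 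Therefore $\inf J=\lim_h J(W^h,\tilde B^h_{tan},\bar Q^h)\ge J(W,B_{tan},\bar Q)$, so $J$ attains its (finite) infimum at $(W,B_{tan},\bar Q)$; the argument for $\tilde J$ is identical with the first quadratic term and $B_{tan}$ removed. The main obstacle is step (b): verifying that the correction produced by subtracting an infinitesimal rigid motion from $V^h$ is absorbed by an element of $\mathcal{B}$, so that the stretching term of $J$ is genuinely invariant under this normalization and the gauge-fixing costs nothing.
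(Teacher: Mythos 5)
Your proposal is correct and follows essentially the same route as the paper: the lower bound via Lemma \ref{Jcoercivity} together with the orthogonality conditions (\ref{sette.cinque}), then normalization of the minimizing sequence by subtracting infinitesimal rigid motions $D^hx+d^h$, absorbing the resulting cross terms $\big((D^h)^2 + D^h\tilde A^h + \tilde A^h D^h\big)_{tan}$ into $\mathcal{B}$ as explicit symmetric gradients, and concluding by weak lower semicontinuity of the quadratic terms together with strong convergence of $(\tilde A^h)^2$. Your observation (a), that the bending term is exactly invariant under this normalization, is only implicit in the paper but is the same mechanism.
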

\begin{proof}
{\bf 1.}
Let $V\in W^{2,2}(S,\mathbb{R}^3)$ be an infinitesimal isometry.
By Lemma \ref{Jcoercivity}, positive definiteness of $\mathcal{Q}_2$ (on symmetric matrices)
and (\ref{sette.cinque}), we obtain:
\begin{equation}\label{C.sei}
\begin{split}
\tilde J(V) &\geq C \|\tilde V\|^2_{W^{2,2}(S)} - \int_S f\cdot\bar{Q}V
= C \|\tilde V\|^2_{W^{2,2}(S)} - \int_S f\cdot\bar{Q}\tilde V\\
&\geq   C \|\tilde V\|^2_{W^{2,2}(S)} - \|f\|_{L^2(S)} \cdot \|\tilde V\|_{L^2(S)},
\end{split}
\end{equation}
for an appropriate modification $\tilde V = V - (Dx+d)$.
Hence the lower bound on $J$ (and $\tilde J$) follows.

\medskip

{\bf 2.}
Let now $(V^h, B^h_{tan}, \bar{Q}^h)$ be a minimizing sequence of $J$.  
Clearly, a subsequence of $\bar{Q}^h$ converges to some $\bar{Q}\in\mathcal{M}$.

Using (\ref{C.sei}) and applying the positive definiteness of $\mathcal{Q}_2$ to the first term in $J$, 
there follows the (uniform in $h$) boundedness of the following expressions:
\begin{equation}\label{C.sette}
\Big(C\|\tilde V^h\|^2_{W^{2,2}(S)} - \|f\|_{L^2(S)}\cdot \|\tilde V^h\|_{L^2(S)}\Big)
+ C \left\|B^h_{tan} - \frac{\kappa}{2} ((A^h)^2)_{tan}\right\|^2_{L^2(S)}.
\end{equation}
Again, we put $\tilde V^h = V^h - (D^h x + d^h)$ and apply Lemma \ref{Jcoercivity}.
In particular, the sequence $\tilde V^h$ is bounded in $W^{2,2}(S)$ and so it converges
(up to a subsequence), weakly in $W^{2,2}(S)$, to an infinitesimal isometry $V$.
Further, the matrix fields $\tilde A^h = A^h - D^h$ converge weakly in $W^{1,2}(S)$
to the field $A$ satisfying (\ref{Vass1}) and (\ref{Vass2}).

Notice that:
$$(A^h)^2 = (\tilde A^h)^2 + (D^h)^2 + (D^hA^h + A^hD^h).$$
Hence the boundedness of the second term in (\ref{C.sette}) results in the
$L^2(S)$ boundedness of:
$$B^h_{tan} - \frac{\kappa}{2}\left((D^h)^2 + (D^hA^h + A^hD^h)\right)_{tan}
= B^h_{tan} - \frac{\kappa}{2} \mbox{sym }\nabla \left( (D^h)^2 x + 2D^h V^h(x)\right).$$
We may now conclude that a subsequence of the above sequence of symmetric 
matrix fields converges, weakly in $L^2(S)$, to some $B_{tan}\in\mathcal{B}$.  
Thus, $B^h_{tan} - \frac{\kappa}{2} ((A^h)^2)_{tan}$ converges to 
$B_{tan} - \frac{\kappa}{2}(A^2)_{tan}$.

By the weak lower semicontinuity of both quadratic terms in $J$ we conclude that 
$J(V, B_{tan}\bar{Q})$ realizes the infimum of $J$.
Likewise, $\tilde J(V,\bar{Q})$ realizes the infimum of $\tilde J$, had $V^h$ been 
a minimizing sequence of $\tilde J$.
\end{proof}

\end{document}